\documentclass[11pt]{amsart}
\usepackage{amssymb}
\usepackage{graphicx}
\usepackage{amscd}
\usepackage{amsmath}
\usepackage{amsfonts,latexsym,amstext}

\usepackage [T1]{fontenc}
\usepackage [latin1]{inputenc}
\usepackage{color}
\usepackage[all]{xy}
\newtheorem*{theorem HaaMu PS}{Theorem}

\newtheorem{theorem}{Theorem}[section]
\newtheorem{definition}[theorem]{Definition}

\newtheorem{lemma}[theorem]{Lemma}
\newtheorem{proposition}[theorem]{Proposition}

\theoremstyle{remark}
\newtheorem{example}[theorem]{Example}
\newtheorem{remark}[theorem]{Remark}

\pagestyle{headings}

\setlength{\oddsidemargin}{0cm} \setlength{\topmargin}{0cm}
\setlength{\evensidemargin}{0cm} \setlength{\textwidth}{16cm}
\setlength{\textheight}{23cm}

\thispagestyle{empty}


\begin{document}

\title{Bilinear ideals in operator spaces}

\author{Ver\'{o}nica Dimant}
\author{Maite Fern\'{a}ndez-Unzueta}

\thanks{The first author was partially supported by CONICET PIP 0624 and PAI-UDESA 2011. The second author was partially supported by CONACYT 182296}

\address{Departamento de Matem\'{a}tica, Universidad de San
Andr\'{e}s, Vito Dumas 284, (B1644BID) Victoria, Buenos Aires,
Argentina and CONICET.} \email{vero@udesa.edu.ar}

\address{Centro de Investigaci\'{o}n en Matem\'{a}ticas (Cimat), A.P. 402 Guanajuato, Gto., M\'{e}xico} \email{maite@cimat.mx}

\keywords{Operator spaces, Bilinear mappings, Bilinear ideals} \subjclass[2010]{47L25,47L22, 46M05}

\begin{abstract}
 We introduce a concept  of bilinear ideal of jointly completely bounded mappings between operator spaces. In particular, we study the bilinear ideals  $\mathcal{N}$ of completely nuclear,  $\mathcal{I }$ of  completely integral,  $\mathcal{E}$  of completely extendible  bilinear mappings,  $\mathcal{MB}$ multiplicatively bounded and its symmetrization $\mathcal{SMB}$.  We  prove some basic properties of them, one of which is the fact that
  $\mathcal{I}$ is naturally identified with the ideal of (linear)  completely integral mappings on the  injective operator space tensor product.

\end{abstract}

\maketitle

{\section{Introduction and Preliminaries}

Let  $V, W$ and $X$ be operator spaces. If   we consider  the underlying vector space structure,
the relations
\begin{equation}\label{linear isomorphisms}
  Bil(V\times W, X)\;  \stackrel{\nu}{\simeq} \;  \mathcal L(V\otimes W ,X) \;\stackrel{\rho}{\simeq}\; \mathcal L(V,\mathcal L(W, X)) \;
 \end{equation}
hold through the two natural linear isomorphisms $\nu$, $\rho$.
In order for $\nu$ and $\rho$  to induce natural morphisms in the  operator space category, it is necessary to have appropriately defined   an operator space tensor norm  on $V\otimes W$ and  specific classes of linear and bilinear mappings. This is the case, for instance,  of  the so called projective operator space tensor norm   $ \|\cdot\|_{\wedge{}}$, the completely bounded maps    and the  jointly completely bounded bilinear mappings, where   $\nu$ and  $\rho$ induce the following completely bounded isometric isomorphisms:
$$
  \mathcal{JCB}(V\times W, X) \;  {\simeq} \;  \mathcal{CB}(V\widehat\otimes W ,X) \; {\simeq}\; \mathcal{CB}(V,\mathcal{CB}(W, X)).
 $$

 There are many possible ways to
provide $V\otimes W$  with an operator space tensor norm and, of course, to define  classes of mappings. Several authors, inspired by the  success  that the  study of the relations between tensor products and mappings    has had  in the Banach space setting, have systematically     study  some analogous  relations for operator spaces.  This is the case, for instance, of the completely nuclear and completely integral linear mappings (see \cite[Section III]{ER-libro}).

In  this paper we follow this   approach as well, but with  the attention focused   on the relations involving $\nu$,  the  isomorphism in (\ref{linear isomorphisms}) which  concerns bilinear mappings.  In Section
\ref{sect: bilinear ideals}  we   introduce the notion of  an ideal of completely bounded bilinear mappings and    study its general properties.  In Section \ref{sect: compl nuclear compl integral} we define the ideals of completely nuclear and completely integral bilinear mappings. The main result proved  here is that the ideal of  completely integral bilinear mappings is naturally identified with the ideal of completely integral linear mappings on the  injective operator space tensor product, that is $
\mathcal{I}(V\times W, X)\cong \mathcal{L_{\mathcal{I}}}(V\overset\vee\otimes W,X)
$ (see Theorem \ref{thm: main for integrals}).  This implies that, contrary to the result for Banach spaces, the relation $\mathcal{I}(V\times W)\cong \mathcal{L_{\mathcal{I}}}(V, W^*)$ does not always hold. Indeed, it holds if and only if $W$ is locally reflexive.

  The ideal  $\mathcal{E}$   of bilinear completely extendible mappings is   introduced  in  Section \ref{sect: compl extendibles}.   We prove in
Proposition \ref{prop: extendible-tensor} that $\mathcal{E}$ gives rise, through duality,  to an operator space tensor product $\eta$ such that  $
\left(V\overset\eta\otimes W\right)^*\cong \mathcal E(V\times W)$. In Section \ref{sect: SMB ideal}  we consider  the ideal  $\mathcal{SMB}$ of
 symmetrized multiplicatively bounded mappings, which is the symmetrization of the ideal $\mathcal{MB}$  of multiplicatively bounded mappings.
The following theorem summarizes the inclusion relations among all these  bilinear ideals:

 \begin{theorem}\label{thm: inclusion relations} Let $V, W$ and $X$ be operator spaces. Then, we have the following complete contractive inclusions:
 \begin{enumerate}
 \item[(a)]  $
 \mathcal{N}(V\times W, X)\subset \mathcal{I}(V\times W, X)\subset \mathcal{MB}(V\times W, X)\subset \mathcal{SMB}(V\times W, X)\subset \mathcal{JCB}(V\times W, X).
$
  \item[(b)]   $ \mathcal{I}(V\times W, X)\subset \mathcal{E}(V\times W, X)\subset \mathcal{JCB}(V\times W, X). $
 \item[(c)]  $ \mathcal{MB}(V\times W, \mathcal{L}(H))\subset\mathcal{SMB}(V\times W, \mathcal{L}(H))\subset \mathcal{E}(V\times W, \mathcal{L}(H)))\subset \mathcal{JCB}(V\times W, \mathcal{L}(H)). $
  \end{enumerate}
 \end{theorem}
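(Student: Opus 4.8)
The plan is to prove each of the seven distinct links separately, using throughout one simplifying reduction. Since the operator space structure of a bilinear ideal is given by $M_n\bigl(\mathcal A(V\times W,X)\bigr)=\mathcal A(V\times W,M_n(X))$, a completely contractive inclusion $\mathcal A\hookrightarrow\mathcal B$ is obtained as soon as one proves the ordinary norm inequality $\|T\|_{\mathcal B}\le\|T\|_{\mathcal A}$ for \emph{every} operator space target, applied with $M_n(X)$ in place of $X$. Thus I only ever verify a scalar-level inequality and complete contractivity is automatic. With this reduction the links split into formal ones, coming from the definitions or from monotonicity of operator space tensor norms, and substantive ones, requiring a factorization or an extension.

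The formal links are $\mathcal{SMB}\subset\mathcal{JCB}$, $\mathcal{E}\subset\mathcal{JCB}$, $\mathcal{MB}\subset\mathcal{SMB}$ and $\mathcal{N}\subset\mathcal{I}$. For the two inclusions into $\mathcal{JCB}$ I would argue directly: for $T\in\mathcal{E}$ every extension $\widetilde T$ restricts to $T$, and restriction is completely contractive, so $\|T\|_{jcb}\le\|\widetilde T\|_{jcb}$ and one takes the infimum over extensions; for $T\in\mathcal{SMB}$ the defining symmetrized representation yields in particular a jcb estimate. The underlying reason is that $\mathcal{JCB}\cong\mathcal{CB}(V\widehat\otimes W,X)$ carries the largest operator space tensor norm, so any ideal realized as completely bounded maps on a tensor product with a smaller norm embeds completely contractively into it. The link $\mathcal{MB}\subset\mathcal{SMB}$ is immediate from the construction of the symmetrization, which only enlarges the admissible representations and hence can only lower the norm. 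Finally $\mathcal{N}\subset\mathcal{I}$ is proved first on finite-rank bilinear maps, where the completely nuclear norm dominates the completely integral norm because the defining projective-type expression dominates the injective-type one, and a density/weak-$*$ limit argument extends the inequality to all completely nuclear maps.

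The substantive links in the integral direction are $\mathcal{I}\subset\mathcal{MB}$ and $\mathcal{I}\subset\mathcal{E}$, and both rest on Theorem \ref{thm: main for integrals}, which identifies $\mathcal{I}(V\times W,X)$ with the completely integral linear maps on $V\overset\vee\otimes W$, together with the fact that the injective norm is the \emph{smallest} reasonable operator space tensor norm. A completely integral linear map is in particular completely bounded on $V\overset\vee\otimes W$, with cb-norm no larger than its integral norm. Since the multiplicatively bounded norm is the completely bounded norm on the Haagerup tensor product $V\otimes_h W$, and $\vee$ is dominated by the Haagerup norm, the identity $V\otimes_h W\to V\overset\vee\otimes W$ is completely contractive; precomposing gives $\mathcal{CB}(V\overset\vee\otimes W,X)\hookrightarrow\mathcal{MB}(V\times W,X)$ and hence $\mathcal{I}\subset\mathcal{MB}$. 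For $\mathcal{I}\subset\mathcal{E}$ I would instead use that a completely integral map factors completely boundedly through an injective operator space; composing this factorization with the functoriality of $\overset\vee\otimes$ and with Arveson--Wittstock extension produces bilinear extensions to all superspaces, with norm controlled by $\|T\|_{\mathcal I}$, which matches the description of $\eta$ in Proposition \ref{prop: extendible-tensor} and yields $\|T\|_{\mathcal E}\le\|T\|_{\mathcal I}$.

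The main obstacle is the link $\mathcal{SMB}\subset\mathcal{E}$ of part (c), which is exactly why it is stated only for the target $\mathcal{L}(H)$. Here I would begin from the representation of a symmetrized multiplicatively bounded map as a symmetrized product of complete contractions in the Christensen--Sinclair--Paulsen--Smith style, realize the factors on an enlarged Hilbert space, and then extend each linear factor from $V$ and $W$ to arbitrary containing operator spaces using the Arveson--Wittstock injectivity of $\mathcal{L}(H)$. Reassembling the extended factors should yield a bilinear extension that is again symmetrized multiplicatively bounded, hence jointly completely bounded, with norm no larger than $\|T\|_{smb}$; taking the infimum over extensions gives $\|T\|_{\mathcal E}\le\|T\|_{smb}$. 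The delicate points, where the real work lies, are that injectivity of the target is genuinely used --- so the argument does not survive for a general $X$ --- and that the reassembled map must be checked to retain the same multiplicative structure, so that the SMB estimate is actually preserved under extension.
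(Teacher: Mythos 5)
Most of your links track the paper's own proof. The chain $\mathcal I(V\times W,X)\cong\mathcal L_{\mathcal I}(V\overset\vee\otimes W,X)\subset\mathcal{CB}(V\overset\vee\otimes W,X)\subset\mathcal{CB}(V\overset h\otimes W,X)\cong\mathcal{MB}(V\times W,X)$ is exactly the argument given there for $\mathcal I\subset\mathcal{MB}$; the inclusions into $\mathcal{JCB}$ and the link $\mathcal{MB}\subset\mathcal{SMB}$ are, as you say, already contained in the verification that each class is a bilinear ideal; and in part (c) your Christensen--Sinclair-style factor-by-factor extension of a multiplicatively bounded map is a legitimate alternative to the paper's shorter route (injectivity of the Haagerup tensor norm plus Arveson--Wittstock applied to the linearization), while your ``extend each summand separately'' treatment of $\mathcal{SMB}$ is the concrete form of the paper's appeal to the universal property of the sum $\mathcal{MB}+{}^t\mathcal{MB}$ together with the symmetry of $\mathcal E$. (Your worry that the reassembled extension must again be symmetrized multiplicatively bounded is unnecessary: to witness extendibility the extension only needs to be jointly completely bounded.) Two links, however, are not secured as written.

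First, $\mathcal I\subset\mathcal E$. Your justification is the assertion that a completely integral linear map ``factors completely boundedly through an injective operator space.'' That assertion is essentially equivalent to the extendibility you are trying to prove, and it is not formal: it is the one genuinely nontrivial input of part (b). The paper obtains it from Pisier's theorem that completely integral maps are completely $2$-summing and that completely $2$-summing maps extend to arbitrary superspaces; without citing or proving that, your argument for this link begs the question. Second, $\mathcal N\subset\mathcal I$: proving the inequality on finite-rank maps and then passing to a ``density/weak-$*$ limit'' is circular as stated, since controlling $\|\phi-\phi_k\|_{\mathcal I}$ already requires the inequality for the non-finite-rank nuclear map $\phi-\phi_k$, or else a lower semicontinuity statement for $\|\cdot\|_{\mathcal I}$ that the paper only establishes for matrix-valued targets. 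No limiting argument is needed: since $\mathcal N$ is an ideal and the inclusions $j_i:F_i\hookrightarrow V,W$ are complete contractions, $\|\phi|_{F_1\times F_2}\|_{\mathcal N}=\|\phi\circ(j_1,j_2)\|_{\mathcal N}\le\|\phi\|_{\mathcal N}$, and taking the supremum over $F_1,F_2$ gives $\|\phi\|_{\mathcal I}\le\|\phi\|_{\mathcal N}$ directly. A final caveat: your blanket reduction of complete contractivity to the scalar-level inequality via $M_n\bigl(\mathcal A(V\times W,X)\bigr)=\mathcal A(V\times W,M_n(X))$ is only literally the definition for $\mathcal{JCB}$ and $\mathcal{MB}$; for $\mathcal N$, $\mathcal I$, $\mathcal E$ and $\mathcal{SMB}$ the paper defines the matrix norms differently (quotient norms, suprema over restrictions, infima over matrix extensions), so the matrix-level estimates should be checked against those definitions, even though in each case the same one-line argument goes through.
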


 In  Section  \ref{sect: examples} we    prove the inclusions  and      provide examples  to  distinguish the ideals.

\

We now recall some basic concepts about operator spaces, mainly with respect to bilinear operators and tensor products. For a more complete presentation of these topics, see \cite{BleLeM, ER-libro, Pisier-libro}.
All vector spaces considered are over the complex numbers.  For a linear space $V$, we let  $M_{n\times m}(V)$  denote the set of all the $n\times m$ matrices of elements in $V$.  In the case  $n=m$, the notation  is simplified to set  $M_{n\times n}(V)=M_n(V)$. If $V$ is the scalar field we just write $M_{n\times m}$ and $M_n$, respectively.   For $\alpha\in M_{n\times m}$, its norm  $\|\alpha\|$ will be considered  as an operator from $\ell_2^m$ to $\ell_2^n$.

 Given $v=(v_{i,j})\in M_n(V)$ and $w=(w_{k,l})\in M_m(V)$,  $v\oplus w\in M_{n+m}(V)$ stands for the matrix
$$
v\oplus w=\left(
            \begin{array}{cc}
              (v_{i,j}) & 0 \\
              0 & (w_{k,l}) \\
            \end{array}
          \right).
$$

A matrix norm $\|\cdot\|$ on a linear space $V$ is an assignment of a norm $\|\cdot\|_n$ on $M_n(V)$, for each $n\in \mathbb{N}$. A linear space $V$ is an
 \textbf{operator space} if it  is  endowed with a matrix norm  satisfying:
\begin{enumerate}
\item[\textbf{M1}] $\|v\oplus w\|_{n+m}=\max\{\|v\|_n,\|w\|_m\}$, for all $v\in M_n(V)$ and $w\in M_m(V)$.
\item[\textbf{M2}] $\|\alpha v \beta\|_m\leq \|\alpha\|\cdot \|v\|_n \cdot \|\beta\|$, for all $v\in M_n(V)$, $\alpha\in M_{m\times n}$ and $\beta\in M_{n\times m}$.
\end{enumerate}
 We usually omit the subindex $n$ in the matrix norms and simply denote $\|\cdot\|$ instead of  $\|\cdot\|_n$. The inclusion $M_{n\times m}(V)\hookrightarrow M_{\max\{n,m\}}(V)$ naturally endows the rectangular matrices with a norm.
Throughout the article, $V$, $W$, $X$, $Y$, $Z$, $U_1$, $U_2$ will denote operator spaces where the underlying normed space is complete (i.e. it is a Banach space).

Every  linear mapping $\varphi:V\to W$ induces, for each $n\in\mathbb{N}$,  a linear mapping  $\varphi_n: M_n(V)\to M_n(W)$ given by
$$
\varphi_n(v)=\left(\varphi(v_{i,j})\right), \textrm{ for all }v=(v_{i,j})\in M_n(V).
$$

 It holds that $\|\varphi\|=\|\varphi_1\|\leq \|\varphi_2\|\leq\|\varphi_3\|\leq ...$.  The \textbf{completely bounded norm} of $\varphi$ is defined by
$$
\|\varphi\|_{cb}=\sup_{n\in\mathbb{N}}\|\varphi_n\|.
$$

We say that $\varphi$ is \textbf{completely bounded} if $\|\varphi\|_{cb}$ is finite, that $\varphi$ is \textbf{completely contractive} if $\|\varphi\|_{cb}\le 1$ and that $\varphi$ is a \textbf{complete isometry} if each $\varphi_n:M_n(V)\to M_n(W)$ is an isometry. It is easy to see that $\|\cdot\|_{cb}$ defines a norm on the space $\mathcal{CB}(V,W)$ of all  completely bounded linear mappings from $V$ to $W$. The natural identification $M_n\left(\mathcal{CB}(V,W)\right)\cong \mathcal{CB}\left(V,M_n(W)\right)$ provides $\mathcal{CB}(V,W)$ with the structure of an operator space. Also, since $V^*=\mathcal{CB}(V,\mathbb{C})$, the dual of an operator space is again an operator space.

   In contrast to the linear case, a bilinear mapping  $\phi:V\times W\to X$ naturally induces not one, but two different bilinear mappings in the matrix levels. Some authors (see, for instance \cite{ER-libro, Sch}) use the name ``complete boundedness'' for the first  notion  and ``multiplicative boundedness'' or ``matrix complete boundedness'' for the second one, while others \cite{BleLeM, BlePau, Wi}  use  the name ``jointly complete boundedness'' for the first concept and ``complete boundedness'' for the second one. In order to avoid confusion, we will not  use the name  ``complete boundedness'' for bilinear mappings.

  So, given a bilinear mapping $\phi:V\times W\to X$, consider the  associated bilinear mapping  $\phi_n:M_n(V)\times M_n(W)\to M_{n^2}(X)$ defined, for each $n\in \mathbb{N}$, as follows:
$$
\phi_n(v,w)=\left(\phi(v_{i,j},w_{k,l})\right), \textrm{ for all }v=(v_{i,j})\in M_n(V), w=(w_{k,l})\in M_n(W).
$$

When  their norms are uniformly bounded, that is, when
$$
\|\phi\|_{jcb}\equiv\sup_{n\in\mathbb{N}}\|\phi_n\|<\infty,
$$
we say that $\phi$ is \textbf{jointly completely bounded}.
 It is plain to see that $\|\cdot\|_{jcb}$ is a norm on the space $\mathcal{JCB}(V\times W,X)$ of all  jointly completely bounded bilinear mappings from $V\times W$ to $X$. As in the linear setting, the identification
$$
M_n\left(\mathcal{JCB}(V\times W,X)\right)\cong \mathcal{JCB}\left(V\times W,M_n(X)\right).
$$
provides  $\mathcal{JCB}(V\times W,X)$  with an   operator space  structure.

 The second way to naturally associate  $\phi$ with a bilinear mapping  $\phi_{(n)}:M_n(V)\times M_n(W)\to M_n(X)$, for each $n\in\mathbb{N}$, involves the matrix product and it
  is given by
$$
\phi_{(n)}(v,w)=\left(\sum_{k=1}^n\phi(v_{i,k},w_{k,l})\right), \textrm{ for all }v=(v_{i,j})\in M_n(V), w=(w_{k,l})\in M_n(W).
$$

We say that $\phi$ is \textbf{multiplicatively bounded} if
$$
\|\phi\|_{mb}=\sup_{n\in\mathbb{N}}\|\phi_{(n)}\|<\infty.
$$

Again, it is easily seen that $\|\cdot\|_{mb}$ is a norm on the space $\mathcal{MB}(V\times W,X)$ of  all  multiplicatively bounded bilinear mappings from $V\times W$ to $X$. The identification
$$
M_n\left(\mathcal{MB}(V\times W,X)\right)\cong \mathcal{MB}\left(V\times W,M_n(X)\right)
$$
endows $\mathcal{MB}(V\times W,X)$ with matrix norms  that give  the structure of an operator space.

   We finish this section recalling  three  basic examples  from the theory of tensor products of operator spaces
    (the general notion is   in Definition  \ref{def: operator space tensor norm}):
   the  operator space projective tensor norm,  the operator space injective tensor norm and the operator space Haagerup tensor norm.

 Consider  two operator spaces $V$ and $W$. The definition  of  the first  norm uses the fact that
each element $u\in M_n(V\otimes W)$ can be written as:
\begin{equation}\label{escritura-proyectiva}
u=\alpha(v\otimes w)\beta
\end{equation}
 with $v\in M_p(V), w\in M_q(W), \alpha\in M_{n\times p\cdot q}, \beta\in M_{p\cdot q\times n}$,  for certain $p,q\in\mathbb{N}$, where $v\otimes w$ is the $p\cdot q\times p\cdot q$-matrix given by
\begin{eqnarray} \label{matrizota}
v\otimes w=\left(
            \begin{array}{cccccccc}
              v_{1,1}\otimes w_{1,1} & \cdots & v_{1,1}\otimes w_{1,q} & \cdots & \cdots & v_{1,p}\otimes w_{1,1} & \cdots & v_{1,p}\otimes w_{1,q} \\
              \vdots & \vdots & \vdots & \cdots & \cdots & \vdots & \vdots & \vdots \\
              v_{1,1}\otimes w_{q,1} & \cdots & v_{1,1}\otimes w_{q,q} & \cdots & \cdots & v_{1,p}\otimes w_{q,1} & \cdots& v_{1,p}\otimes w_{q,q} \\
              \cdots & \cdots & \cdots & \cdots & \cdots &\cdots & \cdots & \cdots \\
              \cdots & \cdots &\cdots & \cdots & \cdots & \cdots & \cdots & \cdots \\
               v_{p,1}\otimes w_{1,1} & \cdots & v_{p,1}\otimes w_{1,q} & \cdots & \cdots & v_{p,p}\otimes w_{1,1} & \cdots & v_{p,p}\otimes w_{1,q} \\
              \vdots & \vdots & \vdots & \cdots & \cdots & \vdots & \vdots & \vdots \\
              v_{p,1}\otimes w_{q,1} & \cdots & v_{p,1}\otimes w_{q,q} & \cdots & \cdots & v_{p,p}\otimes w_{q,1} & \cdots& v_{p,p}\otimes w_{q,q} \\
            \end{array}
          \right)
\end{eqnarray}

The \textbf{operator space projective tensor norm} of  $u\in M_n(V\otimes W)$ is defined as
$$
\|u\|_\wedge =\inf\{\|\alpha\|\cdot \|v\|\cdot\|w\|\cdot\|\beta\|: \textrm{ all representations of } u \textrm{ as in (\ref{escritura-proyectiva})}\}.
$$

 The \textbf{operator space injective tensor norm} of  $u\in M_n(V\otimes W)$ is defined as
$$
\|u\|_\vee =\sup\left\{\|(f\otimes g)_n(u)\|:\ f\in M_p(V^*), g\in M_q(W^*), \|f\|\le 1, \|g\|\le 1\right\}.
$$

The \textbf{operator space projective tensor product} $V\widehat\otimes W$
 and the \textbf{operator space injective tensor product} $V\overset\vee\otimes W$  are  the completion of $\left(V\otimes W, \|\cdot\|_\wedge\right)$ and the completion of $\left(V\otimes W, \|\cdot\|_\vee\right)$, respectively.

There is a natural completely isometric identification:
$$
\mathcal{JCB}(V\times W,X)\cong \mathcal{CB}(V\widehat\otimes W,X)\cong \mathcal{CB}(V,\mathcal{CB}(W,X)).
$$
 So,  in particular:
$$
\mathcal{JCB}(V\times W)\cong (V\widehat\otimes W)^*\cong \mathcal{CB}(V,W^*).
$$

The identification of  $ (V\overset\vee\otimes W)^*$ with a subset of bilinear mappings is done later, in Proposition \ref{prop: integrales=dual del inyectivo}.

Every $u\in M_n(V\otimes W)$ can be written as $u=v\odot w$, for certain matrices $v\in M_{n\times r}(V)$ and $w\in M_{r\times n}(W)$, where
 $$
v\odot w = \left(\sum_{k=1}^r v_{i,k}\otimes w_{k,j}\right).
$$
The \textbf{Haagerup tensor norm} is defined as:
$$
\|u\|_h =\inf\left\{\|v\|\cdot\|w\|:\ u=v\odot w,\ v\in M_{n\times r}(V), w\in M_{r\times n}(W), r\in\mathbb{N}\right\},
$$
while  the \textbf{Haagerup tensor product} $V\overset{h}\otimes W$ is the completion of $\left(V\otimes W, \|\cdot\|_h\right)$.

For any operator spaces $V$ and $W$, $\|\cdot\|_\vee$ and $\|\cdot\|_\wedge$ are, respectively,  the smallest and the largest operator space cross norms on $V\otimes W$.
In particular, for each $u\in M_n(V\otimes W)$ it holds that
$$
\|u\|_\vee\le \|u\|_h \le \|u\|_\wedge.
$$

The Haagerup tensor product is naturally associated with  multiplicatively bounded bilinear operators through the following identifcations:
$$
\mathcal{MB}(V\times W,X)\cong \mathcal{CB}(V\overset h\otimes W,X)\qquad\textrm{and}\qquad \mathcal{MB}(V\times W)\cong (V\overset h\otimes W)^*.
$$

\begin{remark}\label{extensiones}
We will use repeatedly along the text the following extension property for completely bounded linear mappings (see \cite[Theorem 4.1.5]{ER-libro}): if $V$ is a subspace of an operator space $W$ and $H$ is a Hilbert space, then every completely bounded linear map $\varphi:V\to \mathcal L(H)$ has a completely bounded extension $\overline\varphi:W\to \mathcal L(H)$ with $\|\varphi\|_{cb}=\|\overline\varphi\|_{cb}$.

Equivalently, this can be stated  as in  \cite[Theorem 1.6]{Pisier-libro}: if $V$, $W$ are operator spaces, $H$, $K$ are Hilbert spaces such that $V$ is a subspace of $\mathcal L(H)$ and $W$ is a subspace of $\mathcal L(K)$, then every completely bounded linear map $\varphi:V\to W$ has a completely bounded extension $\overline\varphi:\mathcal L(H)\to \mathcal L(K)$ with $\|\varphi\|_{cb}=\|\overline\varphi\|_{cb}$.
\end{remark}

\section{Bilinear ideals}\label{sect: bilinear ideals}

  The linear structure and the closedness by compositions are the basic properties required of  a subset of maps, in order to have  a suitable relation between  mappings spaces and tensor products.  These will be, precisely,  the defining properties of a {\sl bilinear ideal} (see Definition \ref{def: bilinear ideal}).  To deal with compositions, we need first to prove the following estimate:

\begin{lemma}
Let $\phi\in  M_n\left(\mathcal{JCB}(V\times W,X)\right)$, $r_1\in \mathcal{CB}(U_1,V)$, $r_2\in \mathcal{CB}(U_2,W)$, $s\in \mathcal{CB}(X,Y)$. Then $s_n\circ\phi\circ (r_1,r_2)$ is jointly completely bounded and
$$
\|s_n\circ\phi\circ (r_1,r_2)\|_{jcb}\le \|s\|_{cb}\cdot \|\phi\|_{jcb}\cdot \|r_1\|_{cb}\cdot \|r_2\|_{cb}.
$$
\end{lemma}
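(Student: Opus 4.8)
The plan is to reduce everything to the defining level-by-level estimates. Write $\psi := s_n\circ\phi\circ(r_1,r_2)$ and, using the identification $M_n(\mathcal{JCB}(V\times W,X))\cong\mathcal{JCB}(V\times W,M_n(X))$, regard $\phi$ and $\psi$ as bilinear maps into $M_n(X)$ and $M_n(Y)$ respectively (so that $\|\phi\|_{jcb}$ denotes the norm of $\phi$ as an element of $M_n(\mathcal{JCB})$). Since $\|\psi\|_{jcb}=\sup_m\|\psi_m\|$, it suffices to bound $\|\psi_m\|$ by the claimed product uniformly in $m$: establishing the inequality at every matrix level $m$ simultaneously shows that $\psi$ is jointly completely bounded and yields the norm estimate.

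The key step is a bookkeeping identity expressing $\psi_m$ as a composition of controllable maps. Unwinding the definition, for $a\in M_m(U_1)$ and $b\in M_m(U_2)$ one has $\psi_m(a,b)=\big(s_n(\phi(r_1(a_{i,j}),r_2(b_{k,l})))\big)$. Recognizing $(r_1)_m(a)=(r_1(a_{i,j}))$ and $(r_2)_m(b)=(r_2(b_{k,l}))$ on the inside, and the entrywise application of $s_n$ on the outside, this rearranges as
\[
\psi_m=(s_n)_{m^2}\circ\phi_m\circ\big((r_1)_m,(r_2)_m\big),
\]
where $(s_n)_{m^2}\colon M_{m^2}(M_n(X))\to M_{m^2}(M_n(Y))$ is the $m^2$-amplification of the linear map $s_n$. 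With this identity in hand the estimate is submultiplicativity: $\|\psi_m(a,b)\|\le\|(s_n)_{m^2}\|\cdot\|\phi_m\|\cdot\|(r_1)_m(a)\|\cdot\|(r_2)_m(b)\|$, and then $\|\phi_m\|\le\|\phi\|_{jcb}$ together with $\|(r_i)_m(\cdot)\|\le\|(r_i)_m\|\,\|\cdot\|\le\|r_i\|_{cb}\,\|\cdot\|$ handle three of the four factors directly from the definitions of the $jcb$ and $cb$ norms.

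The only point requiring a genuine (if standard) operator space argument is the control of the remaining factor, $\|(s_n)_{m^2}\|\le\|s\|_{cb}$, and I expect this to be the main obstacle in the sense that it is where one must invoke operator space structure rather than pure algebra. I would argue it through the canonical complete isometry $M_{m^2}(M_n(X))\cong M_{m^2 n}(X)$ (and likewise for $Y$): applying $s_n$ entrywise to the outer $m^2\times m^2$ matrix and then $s$ entrywise to each inner $n\times n$ block is exactly applying $s$ entrywise to the identified $m^2n\times m^2n$ matrix, so $(s_n)_{m^2}$ is carried to $s_{m^2 n}$ and hence $\|(s_n)_{m^2}\|=\|s_{m^2 n}\|\le\sup_p\|s_p\|=\|s\|_{cb}$. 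Collecting the four bounds gives $\|\psi_m\|\le\|s\|_{cb}\,\|\phi\|_{jcb}\,\|r_1\|_{cb}\,\|r_2\|_{cb}$ for every $m$, and taking the supremum over $m$ completes the proof.
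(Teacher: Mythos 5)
Your proof is correct and follows essentially the same route as the paper: the identity $\psi_m=(s_n)_{m^2}\circ\phi_m\circ((r_1)_m,(r_2)_m)$ is exactly the paper's $\psi_m=s_{n\cdot m^2}\circ\phi_m\circ((r_1)_m,(r_2)_m)$ once you make the identification $(s_n)_{m^2}\cong s_{nm^2}$ via $M_{m^2}(M_n(X))\cong M_{nm^2}(X)$, which the paper leaves implicit and you spell out. Your version just supplies the bookkeeping the paper dismisses as ``easy to see.''
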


\begin{proof}
Let $\psi=s_n\circ\phi\circ (r_1,r_2)$.  It is easy to see that
$$
\psi_m= s_{n\cdot m^2}\circ \phi_m\circ ((r_1)_m, (r_2)_m).
$$
Thus, for every $m$,
$$
\|\psi_m\|\le \|s_{n\cdot m^2}\|\cdot \|\phi_m\|\cdot \|(r_1)_m\|\cdot \|(r_2)_m\|\le \|s\|_{cb}\cdot \|\phi\|_{jcb}\cdot \|r_1\|_{cb}\cdot \|r_2\|_{cb},
$$ and the conclusion follows.
\end{proof}

In accordance with the definition of an operator space ideal of linear mappings (see  \cite{EJR} and \cite{ER-libro}), we introduce:

\begin{definition}\label{def: bilinear ideal}
An \textbf{operator space bilinear ideal} $\mathfrak{A}$ is an assignment, to each group of three  operator spaces $V$, $W$ and $X$, of a linear  subspace $\mathfrak{A}(V\times W, X)$ of $\mathcal{JCB}(V\times W,X)$   containing all finite type continuous bilinear maps, together with an operator space matrix norm $\|\cdot\|_{\mathfrak{A}}$ such that:
\begin{enumerate}
\item[(a)] For all $\phi\in M_n(\mathfrak{A}(V\times W, X))$, $\|\phi\|_{jcb}\le \|\phi\|_{\mathfrak{A}}$.
\item[(b)] For all $\phi\in M_n(\mathfrak{A}(V\times W, X))$, $r_1\in \mathcal{CB}(U_1,V)$, $r_2\in \mathcal{CB}(U_2,V)$, $s\in \mathcal{CB}(X,Y)$, the matrix $s_n\circ\phi\circ (r_1,r_2)$ belongs to $M_n(\mathfrak{A}(U_1\times U_2, Y))$ and
$$
\|s_n\circ\phi\circ (r_1,r_2)\|_{\mathfrak{A}}\le \|s\|_{cb}\cdot \|\phi\|_{\mathfrak{A}}\cdot \|r_1\|_{cb}\cdot \|r_2\|_{cb}.
$$
\end{enumerate}
\end{definition}

  We now introduce the notion of tensor norm for operator spaces.

\begin{definition}\label{def: operator space tensor norm}
We say that $\alpha$ is an \textbf{operator space tensor norm} if $\alpha$ is an operator space matrix norm on each tensor product of operator spaces $V\otimes W$ that satisfies the following two conditions:
\begin{enumerate}
\item[(a)] $\alpha$ is a cross matrix norm,  that is, $\alpha(v\otimes w)=\|v\|\cdot\|w\|$, for all $v\in M_p(V)$, $w\in M_q(W)$, $p,q\in \mathbb{N}$.
\item[(b)] $\alpha$ fullfils the ``completely metric mapping property'': for every  $r_1\in \mathcal{CB}(U_1,V)$, $r_2\in \mathcal{CB}(U_2,W)$, the operator $r_1\otimes r_2: (U_1\otimes U_2,\alpha)\to (V\otimes W,\alpha)$ is completely bounded and $\|r_1\otimes r_2\|_{cb}\le \|r_1\|_{cb}\cdot \|r_2\|_{cb}$.
\end{enumerate}
\end{definition}
We   denote by $V\overset\alpha\otimes W$ the completion of  $\left(V\otimes W, \alpha\right)$.

This notion  is, in principle, less restrictive than the one introduced in   \cite[Definition 5.9]{BlePau}, which the authors  called ``uniform operator space tensor norm''.
Whenever the linear isomorphism determined  by  (\ref{matrizota}) (the so called algebraic {\sl shuffle}  isomorphism)  $M_p(V)\otimes M_q(W)\rightarrow M_{pq}(V\otimes W)$ extends to a complete contraction $M_p(V)\otimes_{\alpha} M_q(W)\rightarrow M_{pq}(V\otimes_{\alpha}  W)$, both notions coincide \cite{Wi}. That is the case of the three tensor norms defined above (projective, injective and Haagerup).
The proof  that these main examples satisfy the definition, as well as the fact that the projective tensor norm $\|\cdot\|_\wedge$ is the largest operator space tensor norm,  can be found in \cite{ER-libro}.

  Every operator space tensor norm determines, through $\nu$ in (\ref{linear isomorphisms}),  an operator space  bilinear ideal according to the following identification:  Given  $V$, $W$, $X$ operator spaces, let

  $$
\mathfrak{A}_\alpha(V\times W, X)\cong \mathcal{CB}(V\overset\alpha\otimes W,X).
$$

\begin{proposition}\label{prop maximal ideal}
Let $\alpha$ be an operator space  tensor norm.
Then $\mathfrak{A}_\alpha$ is an operator space bilinear ideal.
\end{proposition}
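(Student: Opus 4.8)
The plan is to transport the three defining requirements of Definition \ref{def: bilinear ideal} across the identification $\mathfrak{A}_\alpha(V\times W,X)\cong\mathcal{CB}(V\overset\alpha\otimes W,X)$, which to a bilinear $\phi$ assigns the unique linearization $\Phi\in\mathcal{CB}(V\overset\alpha\otimes W,X)$ with $\Phi(v\otimes w)=\phi(v,w)$, setting $\|\phi\|_{\mathfrak{A}_\alpha}:=\|\Phi\|_{cb}$. Linearity of this assignment is immediate, so $\mathfrak{A}_\alpha(V\times W,X)$ is a linear space, and the operator space matrix norm is the one coming from $M_n(\mathfrak{A}_\alpha(V\times W,X))\cong M_n(\mathcal{CB}(V\overset\alpha\otimes W,X))\cong\mathcal{CB}(V\overset\alpha\otimes W,M_n(X))$, which inherits the canonical operator space structure on a $\mathcal{CB}$-space. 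To see that $\mathfrak{A}_\alpha(V\times W,X)$ contains all finite type maps, I would write such a map as $\phi=\sum_{i=1}^k f_i(\cdot)\,g_i(\cdot)\,x_i$ with $f_i\in V^*$, $g_i\in W^*$, $x_i\in X$, and note that each functional $f_i\otimes g_i$ is completely bounded on $V\overset\alpha\otimes W$ by the completely metric mapping property (Definition \ref{def: operator space tensor norm}(b)); hence $\Phi=\sum_i(f_i\otimes g_i)(\cdot)\,x_i$ is a finite rank, and thus completely bounded, linear map.

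Next I would establish the inclusion into $\mathcal{JCB}$ together with property (a). Since $\|\cdot\|_\wedge$ is the largest operator space tensor norm, the identity on $V\otimes W$ extends to a complete contraction $j:V\widehat\otimes W\to V\overset\alpha\otimes W$. For $\phi$ with linearization $\Phi$, the composite $\Phi\circ j\in\mathcal{CB}(V\widehat\otimes W,X)$ corresponds, under $\mathcal{JCB}(V\times W,X)\cong\mathcal{CB}(V\widehat\otimes W,X)$, exactly to $\phi$; thus $\phi\in\mathcal{JCB}(V\times W,X)$ and
$$
\|\phi\|_{jcb}=\|\Phi\circ j\|_{cb}\le\|\Phi\|_{cb}\,\|j\|_{cb}\le\|\Phi\|_{cb}=\|\phi\|_{\mathfrak{A}_\alpha}.
$$
Running the same argument with $M_n(X)$ in place of $X$ (equivalently, precomposing with $j$ at each matrix level) shows that the inclusion $\mathfrak{A}_\alpha(V\times W,X)\hookrightarrow\mathcal{JCB}(V\times W,X)$ is completely contractive, which is precisely requirement (a).

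Finally, for the ideal property (b), I would trace the composition through the identification. A matrix $\phi\in M_n(\mathfrak{A}_\alpha(V\times W,X))$ corresponds to a single $\Phi\in\mathcal{CB}(V\overset\alpha\otimes W,M_n(X))$ with $\Phi(v\otimes w)=(\phi_{i,j}(v,w))$ and $\|\phi\|_{\mathfrak{A}_\alpha}=\|\Phi\|_{cb}$. Applying $s$ entrywise amounts to post-composing with the amplification $s_n:M_n(X)\to M_n(Y)$, while precomposing the bilinear map with $(r_1,r_2)$ amounts to precomposing $\Phi$ with $r_1\otimes r_2:U_1\overset\alpha\otimes U_2\to V\overset\alpha\otimes W$; hence $s_n\circ\phi\circ(r_1,r_2)$ is the bilinear map linearized by $s_n\circ\Phi\circ(r_1\otimes r_2)$, which is completely bounded, so it lies in $M_n(\mathfrak{A}_\alpha(U_1\times U_2,Y))$. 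Using $\|s_n\|_{cb}=\|s\|_{cb}$ together with the completely metric mapping property $\|r_1\otimes r_2\|_{cb}\le\|r_1\|_{cb}\|r_2\|_{cb}$ I then obtain
$$
\|s_n\circ\phi\circ(r_1,r_2)\|_{\mathfrak{A}_\alpha}=\|s_n\circ\Phi\circ(r_1\otimes r_2)\|_{cb}\le\|s\|_{cb}\,\|\phi\|_{\mathfrak{A}_\alpha}\,\|r_1\|_{cb}\,\|r_2\|_{cb}.
$$
The only genuinely delicate point, and the step I would write out most carefully, is the bookkeeping of these identifications at the matrix level --- in particular verifying that entrywise application of $s$ is exactly post-composition with the amplification $s_n$ and that $\|s_n\|_{cb}=\|s\|_{cb}$ --- since everything else reduces to multiplicativity of the $cb$-norm under composition and the two defining properties of an operator space tensor norm.
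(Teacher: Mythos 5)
Your proposal is correct and follows essentially the same route as the paper: identify $M_n(\mathfrak{A}_\alpha(V\times W,X))$ with $\mathcal{CB}(V\overset\alpha\otimes W,M_n(X))$, deduce (a) from the fact that $\|\cdot\|_\wedge$ dominates $\alpha$ (you phrase this via the canonical complete contraction $j:V\widehat\otimes W\to V\overset\alpha\otimes W$, the paper by comparing the suprema defining the two cb-norms, which is the same estimate), and deduce (b) from the identity $s_n\circ\phi\circ(r_1,r_2)\leftrightarrow s_n\circ\Phi\circ(r_1\otimes r_2)$ together with the completely metric mapping property. Your treatment of the finite type maps and of $\|s_n\|_{cb}=\|s\|_{cb}$ just spells out details the paper leaves implicit.
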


\begin{proof}
From the relation $\mathcal{CB}(V\overset\alpha\otimes W,X) \subset \mathcal{CB}(V\widehat\otimes W,X)$, it follows that $\mathfrak{A}_\alpha(V\times W, X)$  is a subspace of $\mathcal{JCB}(V\times W, X)$. Also, it is clear that all finite type continuous bilinear mappings belong to $\mathfrak{A}_\alpha(V\times W, X)$.

(a) Let $\phi\in M_n(\mathfrak{A}_\alpha(V\times W, X))$ then its linear associated $\widetilde{\phi}$ belongs to $M_n\left(\mathcal{CB}(V\overset\alpha\otimes W,X)\right)\cong \mathcal{CB}(V\overset\alpha\otimes W,M_n(X))$. This says that
$
\|\phi\|_{\mathfrak{A}_\alpha}=\|\widetilde{\phi}\|_{cb}=\sup_m\|\widetilde{\phi}_m\|.
$

The mapping $\widetilde{\phi}_m:M_m(V\overset\alpha\otimes W)\to M_m(M_n(X))$ has norm
\begin{equation*}
\|\widetilde{\phi}_m\| =  \sup\left\{|\widetilde{\phi}_m(u)|:\ u\in M_m(V\otimes W), \alpha(u)\le 1\right\}.
\end{equation*}
On the other hand, $\phi$ also belongs to $M_n(\mathcal{JCB}(V\times W, X))$ and it has an associated  matrix of  linear mappings  $\overline{\phi}\in M_n\left(\mathcal{CB}(V\widehat\otimes W,X)\right)\cong \mathcal{CB}(V\widehat\otimes W,M_n(X))$. This implies that
$$
\|\phi\|_{jcb}=\|\overline{\phi}\|_{cb}=\sup_m\|\overline{\phi}_m\|,
$$
and the mapping $\overline{\phi}_m:M_m(V\widehat\otimes W)\to M_m(M_n(X))$ has norm
\begin{equation*}
\|\overline{\phi}_m\| = \sup\left\{|\overline{\phi}_m(u)|:\ u\in M_m(V\otimes W), \|u\|_\wedge\le 1\right\}.
\end{equation*}

For each $u\in M_m(V\otimes W)$, $\widetilde{\phi}_m(u)=\overline{\phi}_m(u)$ and  $\alpha(u)\le \|u\|_\wedge$. Then,  for every $m$, $\|\overline{\phi}_m\|\le \|\widetilde{\phi}_m\|$,
and thus $\|\phi\|_{jcb}\le \|\phi\|_{\mathfrak{A}_\alpha}$.

\medskip

(b) For $\phi\in M_n(\mathfrak{A}_\alpha(V\times W, X))$, let  $\widetilde{\phi}\in M_n\left(\mathcal{CB}(V\overset\alpha\otimes W,X)\right)$ be its associated matrix of linear mappings. For any  $r_1\in \mathcal{CB}(U_1,V)$, $r_2\in \mathcal{CB}(U_2,W)$ and $s\in \mathcal{CB}(X,Y)$,   the following equality holds.
 \begin{eqnarray*}
\|s_n\circ\phi\circ (r_1,r_2)\|_{\mathfrak{A}_\alpha} & = & \|s_n\circ\widetilde{\phi}\circ (r_1\otimes r_2)\|_{cb}
\end{eqnarray*}
A direct computation gives the required inequality.
\end{proof}

\begin{example}
 Since $\mathcal{MB}(V\times W,X)\cong \mathcal{CB}(V\overset h\otimes W,X)$, from Proposition  \ref{prop maximal ideal}   we obtain that $\mathcal{MB}$ is an operator space bilinear ideal.
\end{example}

With similar arguments to those used to prove Proposition \ref{prop maximal ideal}, we obtain:

\begin{proposition}\label{prop: definition of an ideal through a tensor product}
Let $\alpha$ be an operator space tensor norm and $\mathfrak{B}$ be an operator space ideal of linear mappings.  Given the operator spaces $V$, $W$ and $X$, let  $\mathfrak{A}^{\mathfrak{B}}_\alpha(V\times W,X)$ be the operator space determined by  the identification
\begin{eqnarray} \label{bilinear tensor}
\mathfrak{A}^{\mathfrak{B}}_\alpha(V\times W,X)\cong \mathfrak{B}(V\overset\alpha\otimes W,X).
\end{eqnarray}
Then, $\mathfrak{A}^{\mathfrak{B}}_\alpha$  is an operator space bilinear ideal.
\end{proposition}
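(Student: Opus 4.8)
The plan is to follow the proof of Proposition \ref{prop maximal ideal} almost verbatim, replacing the role of $\mathcal{CB}$ by the linear ideal $\mathfrak{B}$ throughout and invoking the ideal axioms of $\mathfrak{B}$ in place of the bare completely bounded estimates. Recall that an operator space ideal of linear mappings $\mathfrak{B}$ assigns to each pair of operator spaces a subspace of $\mathcal{CB}$ containing the finite rank maps, endowed with a matrix norm $\|\cdot\|_{\mathfrak{B}}$ that dominates $\|\cdot\|_{cb}$ and satisfies the two-sided ideal estimate $\|s_n\circ\psi\circ r\|_{\mathfrak{B}}\le\|s\|_{cb}\,\|\psi\|_{\mathfrak{B}}\,\|r\|_{cb}$ for $\psi\in M_n(\mathfrak{B})$ and completely bounded $s,r$.

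First I would verify the structural conditions of Definition \ref{def: bilinear ideal}. Since $\mathfrak{B}(V\overset\alpha\otimes W,X)\subset\mathcal{CB}(V\overset\alpha\otimes W,X)$, and since $\|\cdot\|_\wedge$ is the largest operator space tensor norm so that the formal identity $V\widehat\otimes W\to V\overset\alpha\otimes W$ is a complete contraction, precomposition gives $\mathcal{CB}(V\overset\alpha\otimes W,X)\subset\mathcal{CB}(V\widehat\otimes W,X)\cong\mathcal{JCB}(V\times W,X)$; hence the identification (\ref{bilinear tensor}) exhibits $\mathfrak{A}^{\mathfrak{B}}_\alpha(V\times W,X)$ as a linear subspace of $\mathcal{JCB}(V\times W,X)$. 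Moreover a finite type continuous bilinear map corresponds under $\nu$ to a finite rank linear map on $V\overset\alpha\otimes W$, which lies in $\mathfrak{B}$ by definition, so all finite type bilinear maps belong to $\mathfrak{A}^{\mathfrak{B}}_\alpha(V\times W,X)$.

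For axiom (a), let $\phi\in M_n(\mathfrak{A}^{\mathfrak{B}}_\alpha(V\times W,X))$ and let $\widetilde\phi\in M_n(\mathfrak{B}(V\overset\alpha\otimes W,X))$ be its linearization, so that $\|\phi\|_{\mathfrak{A}^{\mathfrak{B}}_\alpha}=\|\widetilde\phi\|_{\mathfrak{B}}$ by definition. The domination axiom for $\mathfrak{B}$ gives $\|\widetilde\phi\|_{cb}\le\|\widetilde\phi\|_{\mathfrak{B}}$, while the computation already carried out in Proposition \ref{prop maximal ideal} (comparing the norms of $\widetilde\phi_m$ and $\overline\phi_m$ and using $\alpha(u)\le\|u\|_\wedge$ at each matrix level) yields $\|\phi\|_{jcb}\le\|\widetilde\phi\|_{cb}$. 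Chaining these two inequalities gives $\|\phi\|_{jcb}\le\|\phi\|_{\mathfrak{A}^{\mathfrak{B}}_\alpha}$.

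The heart of the argument is axiom (b), where the two ideal structures interact. Given $r_1\in\mathcal{CB}(U_1,V)$, $r_2\in\mathcal{CB}(U_2,W)$ and $s\in\mathcal{CB}(X,Y)$, the key observation is that the bilinear composition $s_n\circ\phi\circ(r_1,r_2)$ linearizes to $s_n\circ\widetilde\phi\circ(r_1\otimes r_2)$, so that $\|s_n\circ\phi\circ(r_1,r_2)\|_{\mathfrak{A}^{\mathfrak{B}}_\alpha}=\|s_n\circ\widetilde\phi\circ(r_1\otimes r_2)\|_{\mathfrak{B}}$. Here $r_1\otimes r_2$ is viewed as a map $U_1\overset\alpha\otimes U_2\to V\overset\alpha\otimes W$, and the completely metric mapping property of $\alpha$ (Definition \ref{def: operator space tensor norm}(b)) guarantees $\|r_1\otimes r_2\|_{cb}\le\|r_1\|_{cb}\|r_2\|_{cb}$. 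Applying the ideal estimate for $\mathfrak{B}$ to $\widetilde\phi$, with outer map $s$ and inner map $r_1\otimes r_2$, yields membership in $M_n(\mathfrak{B}(U_1\overset\alpha\otimes U_2,Y))$ together with
\[
\|s_n\circ\widetilde\phi\circ(r_1\otimes r_2)\|_{\mathfrak{B}}\le\|s\|_{cb}\,\|\widetilde\phi\|_{\mathfrak{B}}\,\|r_1\otimes r_2\|_{cb}\le\|s\|_{cb}\,\|\phi\|_{\mathfrak{A}^{\mathfrak{B}}_\alpha}\,\|r_1\|_{cb}\,\|r_2\|_{cb},
\]
which is precisely the desired inequality. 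I expect the only delicate point to be the bookkeeping in this linearization identity, namely checking at every matrix level that the inner substitution $(r_1,r_2)$ on the bilinear side corresponds exactly to the tensor map $r_1\otimes r_2$ on the linear side; once that is in place, the completely metric mapping property of $\alpha$ and the ideal property of $\mathfrak{B}$ deliver the estimate automatically.
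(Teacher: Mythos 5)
Your proposal is correct and follows exactly the route the paper intends: the paper gives no separate proof of this proposition, stating only that it follows ``with similar arguments to those used to prove Proposition \ref{prop maximal ideal},'' and your argument is precisely that adaptation, substituting the ideal axioms of $\mathfrak{B}$ (norm domination over $\|\cdot\|_{cb}$, containment of finite rank maps, and the two-sided composition estimate) for the bare $\mathcal{CB}$ estimates, with the completely metric mapping property of $\alpha$ handling $r_1\otimes r_2$. No gaps.
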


\section{Completely nuclear and completely integral bilinear mappings}\label{sect: compl nuclear compl integral}

In \cite[Sections 12.2 and 12.3]{ER-libro} the definitions of completely nuclear and completely integral linear mappings are presented.  We now introduce and study the analogous bilinear concepts. We will see that they define operator space  bilinear ideals. Theorem \ref{thm: main for integrals} provides a concrete identification of the  integral bilinear ideal as in (\ref{bilinear tensor}). On the contrary,  from Proposition \ref{prop: nucleares not dual de tensor}, it will  follow that the nuclear bilinear ideal can not be described in such a way.

   In order to properly define the  notion of nuclearity   in the context of bilinear mappings on operator spaces, we need to state first some natural mappings. Let
  $$
\Theta: (V^*\overset\vee\otimes W^*)\overset\vee\otimes X\hookrightarrow \mathcal{JCB}(V\times W,X)
$$ be   the  natural complete isometry obtained as a composition of the   natural complete isometries  $
V^*\overset\vee\otimes W^*\hookrightarrow (V\widehat\otimes W)^*$,  $\hspace{.5cm}(V\widehat\otimes W)^*\overset\vee\otimes X\hookrightarrow \mathcal{CB}(V\widehat\otimes W,X)\cong\mathcal{JCB}(V\times W,X)$\\ and
  $
(V^*\overset\vee\otimes W^*)\overset\vee\otimes X\hookrightarrow (V\widehat\otimes W)^*\overset\vee\otimes X
$ (see \cite[Proposition 8.1.2 and Proposition 8.1.5]{ER-libro}). Let
 $$\Phi: (V^*\widehat\otimes W^*)\widehat\otimes X\to (V^*\overset\vee\otimes W^*)\overset\vee\otimes X$$
 be the canonical  complete contraction and let
$$
\Psi=\Theta\circ\Phi:(V^*\widehat\otimes W^*)\widehat\otimes X\to\mathcal{JCB}(V\times W,X).
$$
With such a  $ \Psi$:
\begin{definition}
A bilinear mapping $\phi\in\mathcal{JCB}(V\times W,X)$ is \textbf{completely nuclear} if it belongs to the image of $\Psi$.  The operator space structure in the set  of completely nuclear bilinear mappings $\mathcal{N}(V\times W, X)$, is   given by the identification of the image of $\Psi$ with the quotient of its  domain by  its kernel. That is,
$$
\mathcal{N}(V\times W, X)\cong (V^*\widehat\otimes W^*)\widehat\otimes X/\ker\Psi.
$$
\end{definition}

\begin{proposition}
$\mathcal{N}$ is an operator space bilinear ideal.
\end{proposition}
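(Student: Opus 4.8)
The plan is to verify the three requirements of Definition \ref{def: bilinear ideal} for the assignment $\mathcal{N}$, using throughout that the operator space structure on $\mathcal{N}(V\times W,X)$ is the quotient structure induced by $\Psi$. First I would record the almost immediate structural facts. Since $\mathcal{N}(V\times W,X)$ is by definition the image of the linear map $\Psi$, it is a linear subspace of $\mathcal{JCB}(V\times W,X)$; and since every finite type continuous bilinear map $(v,w)\mapsto\sum_i f_i(v)g_i(w)x_i$ equals $\Psi\!\left(\sum_i f_i\otimes g_i\otimes x_i\right)$ with $\sum_i f_i\otimes g_i\otimes x_i$ in the algebraic tensor product $V^*\otimes W^*\otimes X\subset (V^*\widehat\otimes W^*)\widehat\otimes X$, all such maps lie in $\mathcal{N}(V\times W,X)$. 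Because $\Psi$ is completely bounded it is continuous, so $\ker\Psi$ is closed and the quotient $(V^*\widehat\otimes W^*)\widehat\otimes X/\ker\Psi$ is a genuine operator space, furnishing $\mathcal{N}(V\times W,X)$ with the required matrix norm $\|\cdot\|_{\mathcal{N}}$.

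For condition (a) I would use that $\Psi=\Theta\circ\Phi$ is a complete contraction, being the composition of the complete contraction $\Phi$ with the complete isometry $\Theta$. By the universal property of operator space quotients, $\Psi$ factors as $\Psi=\widehat\Psi\circ\pi$, where $\pi$ is the complete quotient map and $\widehat\Psi\colon (V^*\widehat\otimes W^*)\widehat\otimes X/\ker\Psi\to\mathcal{JCB}(V\times W,X)$ is completely contractive. Under the identification defining $\mathcal{N}$, the map $\widehat\Psi$ is precisely the inclusion $\mathcal{N}(V\times W,X)\hookrightarrow\mathcal{JCB}(V\times W,X)$, and its complete contractivity is exactly the inequality $\|\phi\|_{jcb}\le\|\phi\|_{\mathcal{N}}$ at every matrix level.

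The substance of the argument is condition (b), and the plan is to realize the composition operator $C\colon\phi\mapsto s\circ\phi\circ(r_1,r_2)$ on the tensor side. Given $r_1\in\mathcal{CB}(U_1,V)$, $r_2\in\mathcal{CB}(U_2,W)$ and $s\in\mathcal{CB}(X,Y)$, set $L=r_1^*\otimes r_2^*\otimes s\colon (V^*\widehat\otimes W^*)\widehat\otimes X\to (U_1^*\widehat\otimes U_2^*)\widehat\otimes Y$, where $r_i^*$ denotes the (completely bounded, norm-preserving) adjoint. Applying the completely metric mapping property of the projective tensor norm (Definition \ref{def: operator space tensor norm}(b)) twice shows that $L$ is completely bounded with $\|L\|_{cb}\le\|r_1\|_{cb}\|r_2\|_{cb}\|s\|_{cb}$. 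The key computation is the intertwining identity
$$
\Psi'\circ L = C\circ\Psi,
$$
where $\Psi'$ is the analogue of $\Psi$ for $U_1,U_2,Y$. I would verify it on an elementary tensor $f\otimes g\otimes x$: the left side sends it to the bilinear map $(u_1,u_2)\mapsto (r_1^*f)(u_1)\,(r_2^*g)(u_2)\,s(x)$, while the right side sends $\Psi(f\otimes g\otimes x)\colon (v,w)\mapsto f(v)g(w)x$ to $(u_1,u_2)\mapsto f(r_1u_1)\,g(r_2u_2)\,s(x)$; these agree since $f(r_1u_1)=(r_1^*f)(u_1)$ and likewise for $g$. Linearity and continuity then extend the identity to the whole completed domain.

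From the intertwining identity everything follows formally. It shows that $s\circ\phi\circ(r_1,r_2)=C(\phi)$ lies in the image of $\Psi'$, hence is completely nuclear, so $\mathcal{N}$ is stable under the ideal operations; and it shows $L(\ker\Psi)\subset\ker\Psi'$, so $L$ descends to a completely bounded map $\widehat L$ on the quotients with $\|\widehat L\|_{cb}\le\|L\|_{cb}$. Under the identifications defining $\mathcal{N}(V\times W,X)$ and $\mathcal{N}(U_1\times U_2,Y)$, the map $\widehat L$ is exactly $C$. Reading its complete boundedness at matrix level $n$ — where the $n$-th amplification of $C$ sends a matrix $[\phi_{ij}]$, viewed as a bilinear map into $M_n(X)$, to $s_n\circ[\phi_{ij}]\circ(r_1,r_2)$ — yields precisely the estimate $\|s_n\circ\phi\circ(r_1,r_2)\|_{\mathcal{N}}\le\|s\|_{cb}\|\phi\|_{\mathcal{N}}\|r_1\|_{cb}\|r_2\|_{cb}$ required in (b). The main obstacle is establishing the intertwining identity, as this is the one place where one must unwind the definition of $\Psi$ through $\Theta$ and $\Phi$ and keep track of the adjoints $r_i^*$; once it is in place, the metric mapping property and the functoriality of operator space quotients do the rest.
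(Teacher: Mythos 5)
Your proposal is correct and follows essentially the same route as the paper: the paper's proof of (a) is precisely your factorization of $\Psi$ through the quotient, and its proof of (b) is a commutative diagram whose commutativity is exactly your intertwining identity $\Psi'\circ\left((r_1^*\otimes r_2^*)\otimes s\right)=C\circ\Psi$, followed by the same passage to the quotient norm. The only difference is one of detail: you verify the intertwining on elementary tensors explicitly, whereas the paper declares the diagram's commutativity immediate.
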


\begin{proof}
By definition $\mathcal{N}(V\times W, X)$ is a linear subspace of $\mathcal{JCB}(V\times W,X)$ and the contention of finite type elements is plain.  The injective mapping
 $ \mathcal{N}(V\times W, X)\to\mathcal{JCB}(V\times W,X)$ induced on the quotient by the complete contraction $\Psi$, has norm less or equal than $\Psi$, and so, it is   again a complete contraction.  Hence, $\|\phi\|_{jcb}\le \|\phi\|_{\mathcal{N}}$ and  (a) is proved.

\medskip

(b) Let $\overline{\Psi}$ denote the quotient map induced by $\Psi$.  Given  $\phi\in M_n\left(\mathcal{N}(V\times W, X)\right)$, $r_1\in \mathcal{CB}(U_1,V)$, $r_2\in \mathcal{CB}(U_2,W)$ and $s\in \mathcal{CB}(X,Y)$, consider the following diagram:
\begin{equation*}
\xymatrix{M_n\left((V^*\widehat\otimes W^*)\widehat\otimes X\right)\ar[rr]^{\overline{\Psi}_n} \ar[d]^{\left((r_1^*\otimes r_2^*)\otimes s\right)_n} & & {M_n\left(\mathcal{N}(V\times W, X)\right)} \ar[d]^{} \\
 {M_n\left((U_1^*\widehat\otimes U_2^*)\widehat\otimes Y\right)} \ar[rr]^{\overline{\Psi}_n} & & M_n\left(\mathcal{N}(U_1\times U_2,Y)\right) , }
\end{equation*}
where the right vertical arrow is the mapping  $\phi\mapsto s_n\circ\phi\circ (r_1, r_2)$. It is immediate  to check that the mappings are well defined and that the  diagram commutes.  In particular, $s_n\circ\phi\circ (r_1, r_2)$ belongs to $M_n\left(\mathcal{N}(U_1\times U_2,Y)\right)$.
If $u\in M_n\left((V^*\widehat\otimes W^*)\widehat\otimes X\right)$ is such that $\overline{\Psi}_n(u)=\phi$ it holds
$$
s_n\circ\phi\circ (r_1, r_2)= s_n\circ\overline{\Psi}_n(u)\circ (r_1, r_2)=\overline{\Psi}_n\left(\left((r_1^*\otimes r_2^*)\otimes s\right)_n(u)\right).
$$
The estimate we are looking for follows from  the fact that the  inequality
\begin{eqnarray*}
\|s_n\circ\phi\circ (r_1, r_2)\|_{\mathcal{N}}&\le & \|\left((r_1^*\otimes r_2^*)\otimes s\right)_n(u)\|_{M_n\left((U_1^*\widehat\otimes U_2^*)\widehat\otimes Y\right)}
\end{eqnarray*}
holds for every $u$ such that $\overline{\Psi}_n(u)=\phi$.
\end{proof}

\begin{definition}
We say that a bilinear mapping $\phi\in\mathcal{JCB}(V\times W,X)$ is \textbf{completely integral} if
$$
\|\phi\|_{\mathcal{I}}=\sup\left\{\|\phi|_{F_1\times F_2}\|_{\mathcal{N}}:\ F_1\subset V,\ F_2\subset W\textrm{ of finite dimension}\right\}<\infty.
$$
\end{definition}
Let $\mathcal{I}(V\times W,X)$ be the space of all completely integral bilinear mappings from $V\times W$ to $X$. We consider in $\mathcal{I}(V\times W,X)$ the matrix norm given by
$$
\|\phi\|_{\mathcal{I}}=\sup\left\{\|\phi|_{F_1\times F_2}\|_{\mathcal{N}}:\ F_1\subset V,\ F_2\subset W\textrm{ of finite dimension}\right\},
$$
for every $\phi\in M_n\left(\mathcal{I}(V\times W,X)\right)$. It is easy to see that this norm endowed $\mathcal{I}(V\times W,X)$ with the structure of an operator space.

\begin{proposition}\label{prop: jcb < int <nuc} Let  $V,W$, $X$ be operator spaces and let  $\phi\in M_n\left(\mathcal{N}(V\times W,X)\right)$.
Then $$
\|\phi\|_{jcb}\le \|\phi\|_{\mathcal{I}}\le\|\phi\|_{\mathcal{N}}.
$$
The first inequality also holds for  $\phi\in M_n\left(\mathcal{I}(V\times W,X)\right)$.
\end{proposition}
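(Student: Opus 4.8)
The plan is to prove the two inequalities separately, obtaining the right-hand one only in the $\mathcal{N}$-case and the left-hand one directly in the more general $\mathcal{I}$-case (which then subsumes the $\mathcal{N}$-case, since the right-hand inequality will show $\mathcal{N}\subset\mathcal{I}$ with norm-decreasing inclusion). Throughout I would reduce the matrix levels to a single map by means of the identifications $M_n(\mathcal{I}(V\times W,X))\cong\mathcal{I}(V\times W,M_n(X))$, $M_n(\mathcal{N}(V\times W,X))\cong\mathcal{N}(V\times W,M_n(X))$ and $M_n(\mathcal{JCB}(V\times W,X))\cong\mathcal{JCB}(V\times W,M_n(X))$, so that every estimate is an estimate for one bilinear map into $M_n(X)$.

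First I would dispose of the inequality $\|\phi\|_{\mathcal{I}}\le\|\phi\|_{\mathcal{N}}$ for $\phi\in M_n(\mathcal{N}(V\times W,X))$, which is an immediate consequence of the fact that $\mathcal{N}$ is an operator space bilinear ideal. For finite-dimensional $F_1\subset V$ and $F_2\subset W$, the restriction $\phi|_{F_1\times F_2}$ is the composition $\phi\circ(\iota_{F_1},\iota_{F_2})$ with the inclusion maps $\iota_{F_i}$ (and $s=\mathrm{id}_X$). Since inclusions of subspaces of operator spaces are complete isometries, $\|\iota_{F_1}\|_{cb}=\|\iota_{F_2}\|_{cb}=1$, so property (b) in Definition \ref{def: bilinear ideal} gives $\|\phi|_{F_1\times F_2}\|_{\mathcal{N}}\le\|\phi\|_{\mathcal{N}}$. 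Taking the supremum over all such $F_1,F_2$ yields $\|\phi\|_{\mathcal{I}}\le\|\phi\|_{\mathcal{N}}$; in particular each $\phi\in M_n(\mathcal{N})$ lies in $M_n(\mathcal{I})$.

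For the inequality $\|\phi\|_{jcb}\le\|\phi\|_{\mathcal{I}}$, required for $\phi\in M_n(\mathcal{I}(V\times W,X))$, the key observation is that the jointly completely bounded norm is \emph{finitely determined}. Concretely, $\|\phi\|_{jcb}=\sup_m\|\phi_m\|$, and $\|\phi_m\|$ is the supremum of $\|\phi_m(v,w)\|$ over $v\in M_m(V)$, $w\in M_m(W)$ in the unit balls; any such pair has entries spanning finite-dimensional subspaces $F_1\subset V$, $F_2\subset W$, whence $\phi_m(v,w)=(\phi|_{F_1\times F_2})_m(v,w)$. Using that the inclusions $F_i\hookrightarrow V,W$ are complete isometries (so the matrix norms on $M_m(F_i)$ agree with those inherited from $M_m(V)$, $M_m(W)$), I would deduce $\|\phi_m\|=\sup_{F_1,F_2}\|(\phi|_{F_1\times F_2})_m\|$ and then, interchanging the plain suprema over $m$ and over $(F_1,F_2)$, the identity $\|\phi\|_{jcb}=\sup\{\|\phi|_{F_1\times F_2}\|_{jcb}:F_1,F_2\text{ finite-dimensional}\}$. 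Combining this with the inequality $\|\cdot\|_{jcb}\le\|\cdot\|_{\mathcal{N}}$ already established in proving that $\mathcal{N}$ is an ideal, applied on each finite-dimensional pair, gives $\|\phi|_{F_1\times F_2}\|_{jcb}\le\|\phi|_{F_1\times F_2}\|_{\mathcal{N}}\le\|\phi\|_{\mathcal{I}}$, the last step being just the definition of $\|\phi\|_{\mathcal{I}}$; taking the supremum over $F_1,F_2$ closes the argument.

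The only genuinely delicate point is this finite-dimensional reduction of the jcb norm, that is, the verification of $\|\phi\|_{jcb}=\sup_{F_1,F_2}\|\phi|_{F_1\times F_2}\|_{jcb}$. The inequality $\ge$ is clear since each restriction is a compression of $\phi$; for $\le$ one must observe that every competitor $(v,w)$ for $\|\phi_m\|$ is supported in some finite-dimensional $F_1\times F_2$ and that, by complete isometry of the inclusions, its norm is unchanged when computed in $M_m(F_i)$. I expect no essential obstacle here: once this finitary description of $\|\cdot\|_{jcb}$ is in hand, the proposition follows by assembling it with the ideal axioms of $\mathcal{N}$.
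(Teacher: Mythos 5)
Your proposal is correct and follows essentially the same route as the paper: the second inequality via the ideal property of $\mathcal{N}$ applied to the completely contractive inclusions $F_i\hookrightarrow V,W$, and the first via the finite determination $\|\phi\|_{jcb}=\sup_{F_1,F_2}\|\phi|_{F_1\times F_2}\|_{jcb}$ combined with $\|\cdot\|_{jcb}\le\|\cdot\|_{\mathcal{N}}$ on each finite-dimensional pair. The only difference is that you spell out the justification of the finite-determination identity, which the paper simply asserts.
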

\begin{proof} For  $\phi\in M_n\left(\mathcal{I}(V\times W,X)\right)$, consider finite dimensional spaces $F_1\subset V$ and $F_2\subset W$. Since
$\|\phi|_{F_1\times F_2}\|_{jcb}\le \|\phi|_{F_1\times F_2}\|_{\mathcal{N}}$ and
$$
\|\phi\|_{jcb}=\sup\left\{\|\phi|_{F_1\times F_2}\|_{jcb}:\ F_1\subset V,\ F_2\subset W\textrm{ of finite dimension}\right\}
$$

we obtain that
$$
\|\phi\|_{jcb}\le \|\phi\|_{\mathcal{I}}.
$$

Now, if $\phi\in M_n\left(\mathcal{N}(V\times W,X)\right)$ and we denote by $j_1:F_1\hookrightarrow V$ and $j_2:F_2\hookrightarrow W$ the canonical (completely contractive) embeddings, it is clear that
$$
\|\phi|_{F_1\times F_2}\|_{\mathcal{N}}= \|\phi\circ (j_1,j_2)\|_{\mathcal{N}}\le \|\phi\|_{\mathcal{N}}\cdot \|j_1\|_{cb}\cdot \|j_2\|_{cb}= \|\phi\|_{\mathcal{N}}.
$$
\end{proof}
\begin{proposition}
$\mathcal{I}$ is an operator space bilinear ideal.
\end{proposition}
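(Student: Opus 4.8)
The plan is to verify the two conditions of Definition \ref{def: bilinear ideal}, leaning on the already-established facts that $\mathcal{N}$ is an operator space bilinear ideal and that $\|\cdot\|_{\mathcal{I}}\le\|\cdot\|_{\mathcal{N}}$ on nuclear maps (Proposition \ref{prop: jcb < int <nuc}). First I would dispose of the easy structural points. For each fixed pair of finite-dimensional subspaces $(F_1,F_2)$ the assignment $\phi\mapsto\|\phi|_{F_1\times F_2}\|_{\mathcal{N}}$ is a seminorm, so the supremum defining $\|\cdot\|_{\mathcal{I}}$ is subadditive and absolutely homogeneous, and hence $\mathcal{I}(V\times W,X)$ is a linear subspace of $\mathcal{JCB}(V\times W,X)$. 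Every finite type bilinear map lies in $\mathcal{N}(V\times W,X)$ with finite nuclear norm, and Proposition \ref{prop: jcb < int <nuc} gives $\|\phi\|_{\mathcal{I}}\le\|\phi\|_{\mathcal{N}}<\infty$, so all finite type maps are completely integral; this settles the containment requirement. Condition (a), namely $\|\phi\|_{jcb}\le\|\phi\|_{\mathcal{I}}$ for $\phi\in M_n(\mathcal{I})$, is exactly the first inequality of Proposition \ref{prop: jcb < int <nuc}.

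The substance is condition (b), and the idea is to reduce it, by restricting to finite-dimensional subspaces, to the ideal property already proved for $\mathcal{N}$. Fix $\phi\in M_n(\mathcal{I}(V\times W,X))$, $r_1\in\mathcal{CB}(U_1,V)$, $r_2\in\mathcal{CB}(U_2,W)$, $s\in\mathcal{CB}(X,Y)$, and write $\psi=s_n\circ\phi\circ(r_1,r_2)$. The key observation I would isolate is that a finite-dimensional restriction of $\psi$ factors through a finite-dimensional restriction of $\phi$: for finite-dimensional $G_1\subset U_1$ and $G_2\subset U_2$, setting $F_1=r_1(G_1)\subset V$ and $F_2=r_2(G_2)\subset W$ (both finite-dimensional), one has
$$
\psi|_{G_1\times G_2}=s_n\circ\bigl(\phi|_{F_1\times F_2}\bigr)\circ\bigl(r_1|_{G_1},\,r_2|_{G_2}\bigr),
$$
simply because $r_i$ carries $G_i$ into $F_i$. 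Here $\phi|_{F_1\times F_2}$ is completely nuclear (being of finite type, as $F_1,F_2$ are finite-dimensional), so the right-hand side is a legitimate composition of the form to which condition (b) for the ideal $\mathcal{N}$ applies, with $r_1|_{G_1}\in\mathcal{CB}(G_1,F_1)$, $r_2|_{G_2}\in\mathcal{CB}(G_2,F_2)$ and $s\in\mathcal{CB}(X,Y)$.

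Applying that property, together with $\|r_i|_{G_i}\|_{cb}\le\|r_i\|_{cb}$ (restriction is precomposition with the completely isometric inclusion $G_i\hookrightarrow U_i$) and $\|\phi|_{F_1\times F_2}\|_{\mathcal{N}}\le\|\phi\|_{\mathcal{I}}$, I would obtain
$$
\|\psi|_{G_1\times G_2}\|_{\mathcal{N}}\le\|s\|_{cb}\cdot\|\phi\|_{\mathcal{I}}\cdot\|r_1\|_{cb}\cdot\|r_2\|_{cb}.
$$
Taking the supremum over all finite-dimensional $G_1\subset U_1$ and $G_2\subset U_2$ shows at once that the supremum is finite, so $\psi\in M_n(\mathcal{I}(U_1\times U_2,Y))$, and that $\|\psi\|_{\mathcal{I}}\le\|s\|_{cb}\,\|\phi\|_{\mathcal{I}}\,\|r_1\|_{cb}\,\|r_2\|_{cb}$, which is condition (b). I expect no genuine obstacle beyond the bookkeeping in the factorization identity: the only thing demanding care is that the subspaces relevant on the $V$- and $W$-side are exactly the images $r_i(G_i)$, so that each term in the supremum defining $\|\psi\|_{\mathcal{I}}$ (indexed by subspaces of $U_1,U_2$) is dominated by a term of the supremum defining $\|\phi\|_{\mathcal{I}}$ (indexed by subspaces of $V,W$).
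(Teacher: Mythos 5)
Your proposal is correct and follows essentially the same route as the paper: both reduce condition (b) to the ideal property of $\mathcal{N}$ by factoring the restriction $\left(s_n\circ\phi\circ(r_1,r_2)\right)|_{G_1\times G_2}$ through the finite-dimensional images $r_i(G_i)$, and both obtain (a) and the structural points from Proposition \ref{prop: jcb < int <nuc}. Your write-up merely makes explicit the factorization through $F_i=r_i(G_i)$ that the paper's one-line estimate leaves implicit.
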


\begin{proof}
By definition $\mathcal{I}(V\times W, X)$ is a linear subspace of $\mathcal{JCB}(V\times W,X)$. Finite type continuous bilinear maps are obviously contained in $\mathcal{I}(V\times W, X)$.
Condition (a)  was already proved above.

\medskip

(b) Let $\phi\in M_n\left(\mathcal{I}(V\times W, X)\right)$, $r_1\in \mathcal{CB}(U_1,V)$, $r_2\in \mathcal{CB}(U_2,V)$ and $s\in \mathcal{CB}(X,Y)$. For finite dimensional spaces $F_1\subset U_1$ and $F_2\subset U_2$ let $j_1:F_1\hookrightarrow U_1$ and $j_2:F_2\hookrightarrow U_2$ be the canonical (completely contractive) embeddings. We have
\begin{eqnarray*}
\left\|s_n\circ\phi\circ (r_1, r_2)|_{F_1\times F_2}\right\|_{\mathcal{N}}  = \|s_n\circ\phi\circ (r_1 j_1, r_2 j_2)\|_{\mathcal{N}} \le  \|s\|_{cb}\cdot \|\phi\|_{\mathcal{I}}\cdot \|r_1\|_{cb}\cdot \|r_2\|_{cb}.
\end{eqnarray*}
\end{proof}

   A pointwise limit of completely nuclear bilinear contractions
     is  not necessarily  completely nuclear, but it is always integral. This result is in the following two lemmas and  will be used several times. The statements given here
   are simpler than their linear analogues given in \cite[Lemma 12.2.7 and Lemma 12.3.1]{ER-libro}.

\begin{lemma}
Let $(\phi_\lambda)$ and $\phi$ in $M_n\left(\mathcal{N}(F_1\times F_2, M_m)\right)$, where $F_1$ and $F_2$ are finite dimensional operator spaces. Suppose that there exists a constant $C$ such that $\|\phi_\lambda\|_{M_n\left(\mathcal{N}(F_1\times F_2, M_m)\right)}\le C$ for all $\lambda$ and that $\phi_\lambda(x,y)\to \phi(x,y)$ for every $(x,y)\in F_1\times F_2$. Then, $\|\phi\|_{M_n\left(\mathcal{N}(F_1\times F_2, M_m)\right)}\le C$.
\end{lemma}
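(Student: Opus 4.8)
The plan is to reduce everything to finite-dimensional, hence purely algebraic and topological, considerations, and then to exploit the compactness of the unit ball of a finite-dimensional normed space. Since $F_1$, $F_2$ are finite dimensional, the space $M_n(\mathcal{N}(F_1 \times F_2, M_m))$ is itself finite dimensional: indeed $\mathcal{N}(F_1 \times F_2, M_m)$ is a quotient of $(F_1^* \widehat\otimes F_2^*)\widehat\otimes M_m$, which is a finite-dimensional vector space, so $\|\cdot\|_{\mathcal{N}}$ is just an ordinary norm on a finite-dimensional space and in particular the unit ball scaled by $C$, namely $\{\psi : \|\psi\|_{\mathcal{N}} \le C\}$, is a \emph{compact} (hence closed) subset.

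The key observation is that the norm $\|\cdot\|_{\mathcal{N}}$ on this finite-dimensional space is an honest norm and, being a norm on a finite-dimensional space, induces the unique Hausdorff linear topology; so the only thing I need is to identify which topology the pointwise convergence $\phi_\lambda(x,y)\to\phi(x,y)$ corresponds to. First I would note that pointwise convergence on $F_1 \times F_2$ is exactly convergence in the $\|\cdot\|_{jcb}$-norm here: on a finite-dimensional domain, a bilinear map is determined by its values on a (finite) basis of $F_1 \times F_2$, and $\|\cdot\|_{jcb}$ is a norm dominating the sup of norms over such a basis, while conversely on the finite-dimensional space $M_n(\mathcal{JCB}(F_1 \times F_2, M_m))$ all norms are equivalent. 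Thus $\phi_\lambda \to \phi$ pointwise is equivalent to $\phi_\lambda \to \phi$ in $\|\cdot\|_{jcb}$. By Proposition \ref{prop: jcb < int <nuc} we have $\|\cdot\|_{jcb} \le \|\cdot\|_{\mathcal{N}}$, so the $\|\cdot\|_{\mathcal{N}}$-topology is at least as fine as the $\|\cdot\|_{jcb}$-topology; but since both are norm topologies on the \emph{same} finite-dimensional space, they in fact coincide, and convergence in one is convergence in the other.

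Putting these together: the set $K = \{\psi \in M_n(\mathcal{N}(F_1 \times F_2, M_m)) : \|\psi\|_{\mathcal{N}} \le C\}$ is closed in the norm topology, each $\phi_\lambda$ lies in $K$ by hypothesis, and $\phi_\lambda \to \phi$ in that topology (via the equivalence with pointwise/$jcb$ convergence established above). Hence the limit $\phi$ also lies in $K$, i.e.\ $\|\phi\|_{\mathcal{N}} \le C$, which is exactly the claim.

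The main obstacle, and the step deserving the most care, is the precise justification that pointwise convergence on $F_1 \times F_2$ coincides with convergence in one of the matrix norms under consideration. One must argue cleanly that on the finite-dimensional space $M_n(\mathcal{JCB}(F_1 \times F_2, M_m))$ the coordinate-evaluation functionals (evaluation of each of the $n^2 m^2$-or-so matrix entries of $\phi(x,y)$ at basis pairs $(x,y)$) separate points and are continuous, so that their joint convergence is equivalent to norm convergence; this is where finite-dimensionality is doing the real work and where one must avoid quietly assuming completeness or boundedness that is not present. Once this topological identification is in hand, the closedness of a norm-ball in finite dimensions makes the conclusion immediate.
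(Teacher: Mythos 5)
Your proof is correct and follows essentially the same route as the paper: reduce to the observation that, by finite-dimensionality, pointwise convergence of $\phi_\lambda$ implies convergence in the nuclear norm, and then conclude by closedness of the ball $\{\psi:\|\psi\|_{\mathcal N}\le C\}$. The only difference is cosmetic: the paper establishes the norm convergence by an explicit basis expansion $\phi=\sum_{i,j}\phi(x_i,y_j)\,x_i^*\otimes y_j^*$ and a triangle-inequality estimate of $\|\phi_\lambda-\phi\|_{\mathcal N}$, whereas you invoke the abstract equivalence of all norms on a finite-dimensional space.
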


\begin{proof}
Take $\{x_1,\dots,x_k\}$ and $\{y_1, \dots, y_l\}$ vector bases of $F_1$ and $F_2$, respectively, and denote by $\{x_1^*,\dots,x_k^*\}$ and $\{y_1^*, \dots, y_l^*\}$ the corresponding dual bases. Since
$$
\phi_\lambda=\sum_{i,j} \phi_\lambda(x_i,y_j)\, x_i^*\otimes y_j^*\qquad\textrm{ and }\qquad \phi=\sum_{i,j} \phi(x_i,y_j)\, x_i^*\otimes y_j^*
$$ we have
\begin{eqnarray*}
\|\phi_\lambda-\phi\|_{M_n\left(\mathcal{N}(F_1\times F_2, M_m)\right)}&\le &\sum_{i,j} \|\phi_\lambda(x_i,y_j)-\phi(x_i,y_j)\|_{M_{n\cdot m}} \cdot \|x_i^*\otimes y_j^*\|_{\mathcal{N}(F_1\times F_2)}\\
& \le & \sum_{i,j} \|\phi_\lambda(x_i,y_j)-\phi(x_i,y_j)\|_{M_{n\cdot m}} \cdot \|x_i^*\|\cdot \|y_j^*\|\to 0.
\end{eqnarray*}
Hence, the result follows.
\end{proof}

\begin{lemma}\label{convergencia nuclear-integral}
Suppose that  $\phi\in M_n\left(\mathcal{JCB}(V\times W, M_m)\right)$ and that there exists a net $(\phi_\lambda)\subset M_n\left(\mathcal{N}(V\times W, M_m)\right)$ with
$$
\|\phi_\lambda\|_{M_n\left(\mathcal{N}(V\times W, M_m)\right)}\le C, \textrm{ for all }\lambda\quad \textrm{ and }\quad \phi_\lambda(v,w)\to \phi(v,w),\textrm{ for all }v\in V, w\in W.
$$ Then, $\phi$ belongs to $M_n\left(\mathcal{I}(V\times W, M_m)\right)$ and $\|\phi\|_{M_n\left(\mathcal{I}(V\times W, M_m)\right)}\le C$.
\end{lemma}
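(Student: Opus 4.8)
The plan is to reduce the statement directly to the preceding lemma by restricting everything to finite-dimensional subspaces and then invoking the very definition of the integral matrix norm. First I would fix arbitrary finite-dimensional subspaces $F_1\subset V$ and $F_2\subset W$, with canonical completely contractive embeddings $j_1\colon F_1\hookrightarrow V$ and $j_2\colon F_2\hookrightarrow W$. The goal then becomes to bound $\|\phi|_{F_1\times F_2}\|_{M_n(\mathcal{N}(F_1\times F_2,M_m))}$ by $C$; taking the supremum over all such pairs $F_1,F_2$ yields simultaneously that $\phi\in M_n(\mathcal{I}(V\times W,M_m))$ and that $\|\phi\|_{M_n(\mathcal{I}(V\times W,M_m))}\le C$, since this supremum is precisely the integral matrix norm.

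The observation that makes the preceding lemma applicable is that when the domain $F_1\times F_2$ and the target $M_m$ are all finite-dimensional, every jointly completely bounded bilinear map is of finite type and hence completely nuclear; thus $\mathcal{N}(F_1\times F_2,M_m)=\mathcal{JCB}(F_1\times F_2,M_m)$ as sets (and likewise at the matrix level). Consequently both the restricted net $\phi_\lambda|_{F_1\times F_2}=\phi_\lambda\circ(j_1,j_2)$ and the restriction $\phi|_{F_1\times F_2}=\phi\circ(j_1,j_2)$ automatically lie in $M_n(\mathcal{N}(F_1\times F_2,M_m))$, so the membership hypotheses of the preceding lemma are met for free.

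Next I would check the uniform norm bound on the restricted net. Using that $\mathcal{N}$ is an operator space bilinear ideal (property (b) of Definition \ref{def: bilinear ideal}) together with $\|j_1\|_{cb}=\|j_2\|_{cb}=1$, exactly as in the proof of Proposition \ref{prop: jcb < int <nuc}, I get
$$
\|\phi_\lambda|_{F_1\times F_2}\|_{M_n(\mathcal{N}(F_1\times F_2,M_m))}=\|\phi_\lambda\circ(j_1,j_2)\|_{\mathcal{N}}\le \|\phi_\lambda\|_{M_n(\mathcal{N}(V\times W,M_m))}\le C
$$
for every $\lambda$, while the pointwise convergence $\phi_\lambda(x,y)\to\phi(x,y)$ restricts trivially to $(x,y)\in F_1\times F_2$. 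With the uniform bound $C$, membership in the finite-dimensional nuclear space, and pointwise convergence all in hand, the preceding lemma delivers $\|\phi|_{F_1\times F_2}\|_{M_n(\mathcal{N}(F_1\times F_2,M_m))}\le C$, and passing to the supremum finishes the proof.

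I do not expect a genuine obstacle here: the single point requiring care is the reduction to finite dimensions, namely recognizing that nuclearity on $F_1\times F_2\to M_m$ is automatic, so that the previously proved lemma applies verbatim, and that the integral matrix norm is \emph{by definition} the supremum of these finite-dimensional nuclear norms.
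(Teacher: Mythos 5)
Your proof is correct and follows essentially the same route as the paper's: restrict to arbitrary finite-dimensional subspaces $F_1\subset V$, $F_2\subset W$, verify the hypotheses of the preceding lemma for the restricted net (uniform bound via the ideal property and contractive embeddings, automatic nuclearity in finite dimensions, pointwise convergence), and conclude via the definition of the integral matrix norm as a supremum. The paper's own proof is just a terser version of this, leaving the verification of the hypotheses implicit.
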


\begin{proof}
 For a given pair of finite dimensional  subspaces $F_1\subset V$ and $F_2\subset W$, the net $(\phi_\lambda|_{F_1\times F_2})$ and the map $\phi|_{F_1\times F_2}$ satisfy the hypothesis of the previous lemma. Thus, $\left\|\phi|_{F_1\times F_2}\right\|_{M_n\left(\mathcal{N}(V\times W, M_m)\right)}\le C$. This implies that $\phi$ is completely integral and $\|\phi\|_{M_n\left(\mathcal{I}(V\times W, M_m)\right)}\le C$.
\end{proof}

\

For the classes of completely nuclear and completely integral  mappings, it is necessary  to recall the linear definitions in order to   make precise  the relationship between bilinear mappings on operator spaces and linear mappings on operator space tensor products. A complete exposition of this topic is provided in \cite[Chapter 12]{ER-libro}. A linear mapping $\varphi:V\to W$ is said to be \textbf{completely nuclear}, $\varphi\in \mathcal{L_{\mathcal{N}}}(V,W)$, if it belongs to the image of the canonical completely contractive mapping
$$L_\Psi:V^*\widehat\otimes W\to V^*\overset\vee\otimes W \hookrightarrow \mathcal{CB}(V,W).
$$
The operator space structure of $\mathcal{L_{\mathcal{N}}}(V,W)$ is given by the identification
$$
\mathcal{L_{\mathcal{N}}}(V,W)\cong V^*\widehat\otimes W/\ker L_\Psi.
$$

 A linear mapping $\varphi:V\to W$ is said to be \textbf{completely integral}, $\varphi\in\mathcal{L_{\mathcal{I}}}(V,W)$, if the completely nuclear norms of all its restrictions to finite dimensional subspaces of $V$ are bounded. The operator space matrix norm on  $\mathcal{L_{\mathcal{I}}}(V,W)$ is given by
$$
\|\varphi\|_{M_n(\mathcal{L_{\mathcal{I}}}(V,W))}=\sup\left\{\|\varphi|_F\|_{\mathcal{L_{\mathcal{N}}}}:\ F\subset V \textrm{ of finite dimension}\right\},
$$ for each $\varphi\in M_n\left(\mathcal{L_{\mathcal{I}}}(V,W)\right)$.

  So, the relation we were seeking states the following:

\begin{theorem}\label{thm: main for integrals} For every three  operator spaces $V,W$ and $X$,
there is a complete isometry
$$
\mathcal{I}(V\times W, X)\cong \mathcal{L_{\mathcal{I}}}(V\overset\vee\otimes W,X).
$$
\end{theorem}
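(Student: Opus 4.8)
The plan is to deduce the statement from its ``finite dimensional heart'', namely the completely isometric identification
$$\mathcal{N}(F_1\times F_2,X)\cong \mathcal{L_{\mathcal{N}}}(F_1\overset\vee\otimes F_2,X)$$
for finite dimensional $F_1\subset V$ and $F_2\subset W$, and then to assemble the integral case as a supremum over finite dimensional restrictions. The natural candidate for the isomorphism is $\phi\mapsto\widehat{\phi}$, where $\widehat{\phi}$ is the linearization of $\phi$ on the algebraic tensor product, viewed as a map on $F_1\overset\vee\otimes F_2$.

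First I would establish the finite dimensional nuclear identification. Both $\mathcal{N}(F_1\times F_2,X)$ and $\mathcal{L_{\mathcal{N}}}(F_1\overset\vee\otimes F_2,X)$ are quotients of a projective tensor product by the kernel of a canonical map: the former is $(F_1^*\widehat\otimes F_2^*)\widehat\otimes X$ modulo $\ker\Psi$, the latter is $(F_1\overset\vee\otimes F_2)^*\widehat\otimes X$ modulo $\ker L_\Psi$. Using the finite dimensional operator space duality $(F_1\overset\vee\otimes F_2)^*\cong F_1^*\widehat\otimes F_2^*$ (together with $(F_1\widehat\otimes F_2)^*\cong F_1^*\overset\vee\otimes F_2^*$) from \cite{ER-libro}, the two domains are identified completely isometrically. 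It then remains to check that $\Psi$ and $L_\Psi$ are ``the same map'' under $\nu$: evaluating on an elementary tensor $(f\otimes g)\otimes x$ shows that $\Psi((f\otimes g)\otimes x)$ is the bilinear map $(v,w)\mapsto f(v)g(w)x$, while $L_\Psi((f\otimes g)\otimes x)$ is the linear map $v\otimes w\mapsto f(v)g(w)x$, and these correspond under $\nu$. By linearity $\Psi$ and $L_\Psi$ agree on the whole (common) domain, so $\ker\Psi=\ker L_\Psi$ and the two quotient operator spaces coincide. Running the same computation with $X$ replaced by $M_m(X)$ (equivalently, at every matrix level) upgrades this to a complete isometry.

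Next I would globalize. Since the injective tensor norm is injective, for finite dimensional $F_1\subset V$ and $F_2\subset W$ the inclusion $F_1\overset\vee\otimes F_2\hookrightarrow V\overset\vee\otimes W$ is a complete isometry onto a finite dimensional subspace; conversely, every finite dimensional subspace of $V\overset\vee\otimes W$ sits (after a routine density perturbation, as $V\otimes W$ is dense) inside some $F_1\overset\vee\otimes F_2$. Combining this with the ideal (monotonicity) property of $\mathcal{L_{\mathcal{N}}}$ under restriction to subspaces, the supremum defining $\|\widehat\phi\|_{\mathcal{L_{\mathcal{I}}}}$ over finite dimensional subspaces $G\subset V\overset\vee\otimes W$ equals the supremum over subspaces of the form $F_1\overset\vee\otimes F_2$. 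By the finite dimensional identification the latter supremum is exactly $\sup_{F_1,F_2}\|\phi|_{F_1\times F_2}\|_{\mathcal{N}}=\|\phi\|_{\mathcal{I}}$. This yields $\|\widehat\phi\|_{\mathcal{L_{\mathcal{I}}}}=\|\phi\|_{\mathcal{I}}$, and in particular $\phi\in\mathcal{I}(V\times W,X)$ if and only if $\widehat\phi\in\mathcal{L_{\mathcal{I}}}(V\overset\vee\otimes W,X)$; repeating at each matrix level promotes the isometry to a complete isometry.

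I expect the main obstacle to be the finite dimensional nuclear step, specifically verifying that the two a priori different quotient constructions produce the same operator space: one must line up the domains through the correct finite dimensional operator space dualities and confirm that $\Psi$ and $L_\Psi$ (which factor through different injective tensor products, namely $(F_1\widehat\otimes F_2)^*\overset\vee\otimes X$ versus $(F_1\overset\vee\otimes F_2)^*\overset\vee\otimes X$) nonetheless have identical kernels. The globalization is then essentially formal, the only delicate point being the density argument needed to handle finite dimensional subspaces of the completion $V\overset\vee\otimes W$ that are not contained in the algebraic tensor product.
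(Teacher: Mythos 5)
Your proposal is correct in substance, but it takes a genuinely different route from the paper on the hard direction. The paper first proves the case $X=M_n$ (Proposition \ref{prop: main for integrals into matrices}) by a duality argument: it identifies $\mathcal{L_{\mathcal{I}}}(V\overset\vee\otimes W,M_n)$ with $\left((V\overset\vee\otimes W)\overset\vee\otimes T_n\right)^*$ and shows the induced map $S_{int}$ is a surjective complete isometry using Goldstine's theorem, the Arveson--Wittstock extension theorem, and the lemma that a bounded pointwise limit of completely nuclear maps is completely integral; it then handles general $X$ by composing with a norming $\xi\in\mathcal{CB}(X,M_n)$ to prove that $L_\phi$ is completely bounded for the injective norm, and finishes with exactly your finite-dimensional identification $\mathcal{N}(F_1\times F_2,X)\cong\mathcal{L_{\mathcal{N}}}(F_1\overset\vee\otimes F_2,X)$ coming from $(F_1\overset\vee\otimes F_2)^*\cong F_1^*\widehat\otimes F_2^*$. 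Your route bypasses the $M_n$ step entirely, and it does work, because the chain $\|L_\phi|_{F_1\overset\vee\otimes F_2}\|_{cb}\le\|L_\phi|_{F_1\overset\vee\otimes F_2}\|_{\mathcal{L_{\mathcal{N}}}}=\|\phi|_{F_1\times F_2}\|_{\mathcal{N}}\le\|\phi\|_{\mathcal{I}}$, together with the injectivity of $\overset\vee\otimes$ (so that $\|u\|_{M_n(F_1\overset\vee\otimes F_2)}=\|u\|_{M_n(V\overset\vee\otimes W)}$ for $u\in M_n(F_1\otimes F_2)$), already yields $\|(L_\phi)_n(u)\|\le\|\phi\|_{\mathcal{I}}\cdot\|u\|_{M_n(V\overset\vee\otimes W)}$ uniformly in $n$. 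The one step you should make explicit is precisely this: before you can speak of $\|\widehat\phi\|_{\mathcal{L_{\mathcal{I}}}}$ you must know that $\widehat\phi$ extends to a completely bounded map on the \emph{completion} $V\overset\vee\otimes W$, since $\mathcal{L_{\mathcal{I}}}(V\overset\vee\otimes W,X)$ consists of maps defined there; your own finite-dimensional estimate supplies this, but the proposal leaves it implicit. What the paper's longer detour buys is the duality $\mathcal{I}(V\times W,M_n)\cong\left((V\overset\vee\otimes W)\overset\vee\otimes T_n\right)^*$, of independent interest; what yours buys is brevity. Both arguments share the same two delicate points: the finite-dimensional duality $(F_1\overset\vee\otimes F_2)^*\cong F_1^*\widehat\otimes F_2^*$, and the perturbation argument needed for finite-dimensional subspaces of $V\overset\vee\otimes W$ not contained in the algebraic tensor product --- a point you flag and the paper in fact glosses over when it asserts $F\subset F_1\overset\vee\otimes F_2$.
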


An analogous relation in the Banach space setting holds, and it  is crucial in the study of the bilinear integral mappings (see  \cite{Vi}).  The proof for  operator spaces is, however, quite more involved.

      We prove first  the  particular case of Theorem \ref{thm: main for integrals} when  $X$ is the finite dimensional operator space of  $n\times n $-matrices $M_n$.  The operator space dual/pre-dual of $M_n$ is the space   $T_n$  of $n\times n$-matrices where the norm is given by
$$
\|\alpha\|_{T_n}=\textrm{trace}(|\alpha|).
$$

\begin{remark}\label{Goldstine}
A version of ``Goldstine's theorem'' holds in operator spaces: If $u\in M_n(V^{**})$ with $\|u\|\le 1$, then there exists a net $(u_{\lambda})\in M_n(V)$ such that $\|u_\lambda\|\le 1$, for all $\lambda$ and $\varphi_n(u_\lambda)\to u(\varphi)$, for all $\varphi\in V^*$ (see \cite[Proposition 4.2.5]{ER-libro}).
\end{remark}

\begin{proposition}\label{prop: main for integrals into matrices}
There is a complete isometry
$ \,\,
\mathcal{I}(V\times W, M_n)\cong \mathcal{L_{\mathcal{I}}}(V\overset\vee\otimes W,M_n).
$
\end{proposition}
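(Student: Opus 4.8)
The plan is to reduce the statement to finite dimensions, where the completely nuclear and completely integral norms coincide and the injective and projective tensor norms are dual to one another, and then to recover the infinite-dimensional identity by a density-and-convergence argument. Throughout I identify a jointly completely bounded $\phi:V\times W\to M_n$ with the associated linear map $\tilde\phi$ on $V\otimes W$ via $\nu$, the isomorphism in (\ref{linear isomorphisms}); the content of the proposition is that $\phi$ is completely integral exactly when $\tilde\phi$ extends to a completely integral linear map $\hat\phi$ on $V\overset\vee\otimes W$, with equal norms. Since $M_k(\mathcal{I}(V\times W,M_n))\cong\mathcal{I}(V\times W,M_{kn})$ and $M_k(\mathcal{L_{\mathcal{I}}}(V\overset\vee\otimes W,M_n))\cong\mathcal{L_{\mathcal{I}}}(V\overset\vee\otimes W,M_{kn})$, it suffices to prove the isometric identity of these two norms for every $n$, and the complete isometry then follows.

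First I would establish the finite dimensional model: for finite dimensional $F_1\subset V$ and $F_2\subset W$ there is a complete isometry $\mathcal{N}(F_1\times F_2,M_n)\cong\mathcal{L_{\mathcal{N}}}(F_1\overset\vee\otimes F_2,M_n)$. Here the defining map $\Psi=\Theta\circ\Phi$ has $\Phi$ the projective-to-injective map, which on finite dimensional spaces is a linear isomorphism (the identity on the algebraic tensor product) and hence injective, and $\Theta$ is a complete isometry; likewise $L_\Psi$ is injective. Thus both nuclear norms are genuine, unquotiented projective norms on a common domain, once I identify $(F_1\overset\vee\otimes F_2)^*\cong F_1^*\widehat\otimes F_2^*$ (the finite dimensional injective--projective duality, where the canonical complete isometry $F_1^*\widehat\otimes F_2^*\hookrightarrow(F_1\overset\vee\otimes F_2)^*$ is onto by a dimension count) and use associativity of $\widehat\otimes$ to write $(F_1\overset\vee\otimes F_2)^*\widehat\otimes M_n\cong(F_1^*\widehat\otimes F_2^*)\widehat\otimes M_n$. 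A check on elementary tensors $(f\otimes g)\otimes x$, where $\Psi$ produces the bilinear map $(v,w)\mapsto f(v)g(w)x$ and $L_\Psi$ the linear map $v\otimes w\mapsto f(v)g(w)x$, shows $\Psi=\nu^{-1}\circ L_\Psi$, so the two nuclear norms coincide with the projective norm of the common preimage.

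With the model in hand, the easy inequality $\|\phi\|_{\mathcal{I}}\le\|\hat\phi\|_{\mathcal{L_{\mathcal{I}}}}$ comes from restricting $\hat\phi$ to the subspaces $F_1\overset\vee\otimes F_2\hookrightarrow V\overset\vee\otimes W$ (complete isometries, by injectivity of $\overset\vee\otimes$) and taking the supremum over $F_1,F_2$. The same restrictions, together with $\|\cdot\|_{cb}\le\|\cdot\|_{\mathcal{L_{\mathcal{N}}}}$, show that when $\phi$ is integral $\tilde\phi$ is $\|\cdot\|_\vee$-bounded on $V\otimes W$ with bound $\|\phi\|_{\mathcal{I}}$, and therefore extends to $\hat\phi\in\mathcal{CB}(V\overset\vee\otimes W,M_n)$; this is what makes the linear object live on the injective tensor product at all.

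The main obstacle is the reverse inequality $\|\hat\phi\|_{\mathcal{L_{\mathcal{I}}}}\le\|\phi\|_{\mathcal{I}}$, because the completely integral norm of $\hat\phi$ is a supremum over all finite dimensional $G\subset V\overset\vee\otimes W$, and such a $G$ need not lie in any product subspace $F_1\overset\vee\otimes F_2$ (its elements are genuine $\|\cdot\|_\vee$-limits of algebraic tensors). I would handle this by approximation: given $G$ with basis $z_1,\dots,z_d$, choose $z_j^\lambda\in V\otimes W$ with $z_j^\lambda\to z_j$ in $\|\cdot\|_\vee$, let $\theta_\lambda:G\to V\otimes W$ send $z_j\mapsto z_j^\lambda$, and note that $\hat\phi\circ\theta_\lambda$ is completely nuclear with norm at most $\|\phi\|_{\mathcal{I}}\cdot\|\theta_\lambda\|_{cb}$, since its range sits in some $F_1^\lambda\overset\vee\otimes F_2^\lambda$ (finite dimensional model plus the ideal property). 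As $\lambda$ grows, $\|\theta_\lambda\|_{cb}\to1$ and $\hat\phi\circ\theta_\lambda\to\hat\phi|_G$ pointwise on $G$, so the fixed-finite-dimensional-domain convergence result for nuclear norms (the linear counterpart of Lemma \ref{convergencia nuclear-integral} and its preceding lemma) yields $\|\hat\phi|_G\|_{\mathcal{L_{\mathcal{N}}}}\le\|\phi\|_{\mathcal{I}}$. A Goldstine-type approximation (Remark \ref{Goldstine}) is the natural device for producing and controlling these approximants at all matrix levels, which is what upgrades the isometry to a complete isometry. Taking the supremum over $G$ finishes the proof.
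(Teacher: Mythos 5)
Your argument for the isometry is essentially correct, but it follows a genuinely different route from the paper's. The paper never touches the definition of $\mathcal{L_{\mathcal{I}}}(V\overset\vee\otimes W,M_n)$ as a supremum over finite dimensional subspaces of the \emph{completed} tensor product: it first replaces that space by $((V\overset\vee\otimes W)\overset\vee\otimes T_n)^*$ via \cite[Corollary 12.3.4]{ER-libro}, builds a commutative diagram of canonical maps ($S$, $\widehat\Psi$, $\Omega$, $\Phi^*$), and obtains the hard inequality by extending a matrix functional through Arveson--Wittstock (Remark \ref{extensiones}), passing to the bidual of $(V^*\widehat\otimes W^*)\widehat\otimes M_n$, and combining Goldstine (Remark \ref{Goldstine}) with Lemma \ref{convergencia nuclear-integral}. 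You instead attack the definition of $\mathcal{L_{\mathcal{I}}}$ head on, and your approximation device --- replacing a finite dimensional $G\subset V\overset\vee\otimes W$ by maps $\theta_\lambda:G\to V\otimes W$ with $\|\theta_\lambda\|_{cb}\to 1$, then using the ideal property of $\mathcal{L_{\mathcal{N}}}$ and the pointwise-convergence lemma on the fixed domain $G$ --- is sound: $\|\theta_\lambda-\iota_G\|_{cb}\le\sum_j\|z_j^*\|\,\|z_j^\lambda-z_j\|\to0$ because rank-one maps $z^*\otimes x$ have cb-norm at most $\|z^*\|\,\|x\|$. This makes the proof more self-contained and squarely confronts the real difficulty (a finite dimensional $G\subset V\overset\vee\otimes W$ need not lie in any $F_1\overset\vee\otimes F_2$), which the paper sidesteps by working with the dual description; what you give up is that dual description itself, which the paper reuses afterwards.

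There is one genuine soft spot: the complete isometry. The identifications $M_k(\mathcal{I}(V\times W,M_n))\cong\mathcal{I}(V\times W,M_{kn})$ and $M_k(\mathcal{L_{\mathcal{I}}}(V\overset\vee\otimes W,M_n))\cong\mathcal{L_{\mathcal{I}}}(V\overset\vee\otimes W,M_{kn})$ are asserted without proof and are dubious: unwinding the definitions they would require $M_k(Z\widehat\otimes M_n)\cong Z\widehat\otimes M_{kn}$, i.e.\ commuting the amplification $M_k(\,\cdot\,)=M_k\overset\vee\otimes(\,\cdot\,)$ past a projective tensor product, which fails in general (note that the paper's two lemmas are deliberately stated for $M_n(\mathcal{N}(\,\cdot\,,M_m))$ with both indices kept separate). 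The closing appeal to Goldstine is also misplaced: your approximants come from norm-density of $V\otimes W$ in $V\overset\vee\otimes W$, not from weak-$*$ density in a bidual, and it does not by itself upgrade an isometry to a complete one. The repair is routine, though: run your argument verbatim for $\phi\in M_k(\mathcal{I}(V\times W,M_n))$, using that the finite dimensional model $\mathcal{N}(F_1\times F_2,M_n)\cong\mathcal{L_{\mathcal{N}}}(F_1\overset\vee\otimes F_2,M_n)$ is a \emph{complete} isometry (hence isometric at level $k$) and that the convergence lemma holds at every matrix level by the same basis computation. With that rewording the proof closes.
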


\begin{proof}
Since $M_n=T_n^*$ is a finite-dimensional operator space,  from   \cite[Corollary 12.3.4]{ER-libro} we get that there is a completely isometric identity
$$
\mathcal{L_{\mathcal{I}}}(V\overset\vee\otimes W,M_n)\cong \left((V\overset\vee\otimes W)\overset\vee\otimes T_n\right)^*.
$$
Thus, the result will be proved once we see that there is a complete isometry
$$
\mathcal{I}(V\times W, M_n)\cong \left((V\overset\vee\otimes W)\overset\vee\otimes T_n\right)^*.
$$

To that end, consider the following applications:
\begin{itemize}
\item $S:\mathcal{JCB}(V\times W, M_n)\to \left((V\widehat\otimes W)\widehat\otimes T_n\right)^*$, which is the canonical completely isometric isomorphism given by the identification
    $$
    \mathcal{JCB}(V\times W, M_n)\cong \mathcal{CB}(V\widehat\otimes W,M_n)\cong \left((V\widehat\otimes W)\widehat\otimes T_n\right)^*.
    $$
\item $\widehat{\Psi}:(V^*\widehat\otimes W^*)\widehat\otimes M_n \to \mathcal{N}(V\times W,M_n)$, the  quotient map.
\item $\Omega: (V^*\widehat\otimes W^*)\widehat\otimes M_n\to \left((V\overset\vee\otimes W)\overset\vee\otimes T_n\right)^*$, the linearization of the trilinear mapping
    \begin{eqnarray*}
    V^*\times W^*\times T_n^* & \to & (V\overset\vee\otimes W\overset\vee\otimes T_n)^*\\
    (v^*,w^*,\phi^*) &\mapsto & (v\otimes w\otimes \phi \mapsto v^*(v)w^*(w)\phi^*(\phi)),
    \end{eqnarray*} which is completely contractive.
\item $\Phi^*:\left((V\overset\vee\otimes W)\overset\vee\otimes T_n\right)^*\hookrightarrow \left((V\widehat\otimes W)\widehat\otimes T_n\right)^*$, which is the transpose mapping of $\Phi: (V\widehat\otimes W)\widehat\otimes T_n\to (V\overset\vee\otimes W)\overset\vee\otimes T_n$. Since $\Phi$ is a complete contraction and it has dense range, $\Phi^*$ results an injective complete contraction.
\end{itemize}
Replicating the argument of the linear case we use the previous mappings to construct a commutative diagram:
$$
\begin{array}{ccccc}
  \mathcal{N}(V\times W,M_n) & \subseteq & \mathcal{I}(V\times W,M_n) & \subseteq & \mathcal{JCB}(V\times W,M_n) \\
 \widehat{\Psi} \Big\uparrow &    & & & \Big\downarrow S\\
  (V^*\widehat\otimes W^*)\widehat\otimes M_n & \overset{\Omega}\longrightarrow & \left((V\overset\vee\otimes W)\overset\vee\otimes T_n\right)^* & \overset{\Phi^*}\longrightarrow &
  \left((V\widehat\otimes W)\widehat\otimes T_n\right)^*
\end{array}
$$

The injectivity of both $S_{|_\mathcal{N}}$ and $\Phi^*$  yields that $ \ker (\Omega)=\ker (\widehat{\Psi})$. This
allows us to define:
$$
S_{nuc}: \mathcal{N}(V\times W,M_n) \to \left((V\overset\vee\otimes W)\overset\vee\otimes T_n\right)^*
$$
in such a way  that $S_{nuc}\circ \widehat{\Psi}=\Omega$ and $ \Phi^*\circ S_{nuc}=S_{|_\mathcal{N}}$. The mapping  $S_{nuc}$ is a complete contraction.

Let us suppose now that $\phi\in \mathcal{I}(V\times W,M_n)$ with $\|\phi\|_{\mathcal{I}(V\times W,M_n)}\le 1$. We want to see that $S(\phi)$ is continuous with respect to the injective tensor norm of $(V\otimes W)\otimes T_n$. Given $u\in (V\otimes W)\otimes T_n$ with $\|u\|_\vee\le 1$, there exist finite-dimensional spaces $V_u\subset V$ and $W_u\subset W$ such that $u\in (V_u\otimes W_u)\otimes T_n$. Let us call $j_{V_u}:V_u\hookrightarrow V$ and $j_{W_u}:W_u\hookrightarrow W$ the canonical inclusions, then
$$
\langle S(\phi), u\rangle = \langle S_{nuc}(\phi\circ (j_{V_u},j_{W_u})), u\rangle.
$$
Therefore,
\begin{eqnarray*}
|\langle S(\phi), u\rangle | &\le & \| S_{nuc}(\phi\circ (j_{V_u},j_{W_u}))\|_{\left((V_u\overset\vee\otimes W_u)\overset\vee\otimes T_n\right)^*}\cdot \|u\|_{(V_u\overset\vee\otimes W_u)\overset\vee\otimes T_n}\\
&\le & \|\phi\circ (j_{V_u},j_{W_u})\|_{\mathcal{N}(V_u\times W_u, M_n)}\cdot \|u\|_\vee\\
&\le & \|\phi\|_{\mathcal{I}(V\times W,M_n)} \le 1.
\end{eqnarray*}

Thus,  $S$  determines a contractive mapping
$$
S_{int}:\mathcal{I}(V\times W,M_n)\to \left((V\overset\vee\otimes W)\overset\vee\otimes T_n\right)^*.
$$
Through a similar argument it can be seen that $S_{int}$ is also a complete contraction.

Let us show now that $S_{int}$ is a complete isometry. For that, get $\phi\in M_m\left(\mathcal{I}(V\times W,M_n)\right)$ such that $\|(S_{int})_m(\phi)\|_{M_m\left(\left((V\overset\vee\otimes W)\overset\vee\otimes T_n\right)^*\right)}\le 1$. We have to prove that $\|\phi\|_{M_m\left(\mathcal{I}(V\times W,M_n)\right)}\le 1$.

Since $(S_{int})_m(\phi)\in M_m\left(\left((V\overset\vee\otimes W)\overset\vee\otimes T_n\right)^*\right)\cong \mathcal{CB}((V\overset\vee\otimes W)\overset\vee\otimes T_n, M_m)$ and $(V\overset\vee\otimes W)\overset\vee\otimes T_n\hookrightarrow \mathcal{JCB}(V^*\times W^*,T_n)$ is a complete isometry, by Remark \ref{extensiones},  $(S_{int})_m(\phi)$ extends to $\widetilde{(S_{int})_m(\phi)}\in \mathcal{CB}(\mathcal{JCB}(V^*\times W^*,T_n), M_m)$ preserving the norm. Now, we have completely isometric identifications
\begin{eqnarray*}
\mathcal{CB}(\mathcal{JCB}(V^*\times W^*,T_n), M_m)&\cong & \mathcal{CB}(\mathcal{CB}(V^*\widehat\otimes W^*,T_n), M_m)\cong \mathcal{CB}\left(\left((V^*\widehat\otimes W^*)\widehat\otimes M_n\right)^*,M_m\right)\\
&\cong & M_m\left(\left((V^*\widehat\otimes W^*)\widehat\otimes M_n\right)^{**}\right),
\end{eqnarray*}
and we thus know that $\|\widetilde{(S_{int})_m(\phi)}\|_{M_m\left(\left((V^*\widehat\otimes W^*)\widehat\otimes M_n\right)^{**}\right)}\le 1$. Hence, by Remark \ref{Goldstine}, there exists a net $(u_\lambda)$ in $M_m\left((V^*\widehat\otimes W^*)\widehat\otimes M_n\right)$ with $\|u_\lambda\|\le 1$ such that, for all $\varphi\in \left((V^*\widehat\otimes W^*)\widehat\otimes M_n\right)^*$,
$$
\varphi_m(u_\lambda)\to \widetilde{(S_{int})_m(\phi)}(\varphi).
$$

In particular, for any $v\in V$, $w\in W$ and $\alpha\in T_n$,
$$
((v\otimes w)\otimes\alpha)_m(u_\lambda)\to \widetilde{(S_{int})_m(\phi)}((v\otimes w)\otimes\alpha)= (S_{int})_m(\phi)((v\otimes w)\otimes\alpha).
$$

Looking into the coordinates of this matrix limit, with the notation $u_\lambda=(u_\lambda^{k,l})_{k,l}$ and $\phi=(\phi^{k,l})_{k,l}$, we obtain
$$
\langle \widehat{\Psi}(u_\lambda^{k,l})(v,w),\alpha\rangle=((v\otimes w)\otimes\alpha)(u_\lambda^{k,l})\to S_{int}(\phi^{k,l})((v\otimes w)\otimes \alpha)=\langle \phi^{k,l}(v,w),\alpha\rangle,
$$ for every $(v,w)\in V\times W$, $\alpha\in T_n$ and $k,l\in\{1,\dots,m\}$.
Thus, for each pair $(v,w)\in V\times W$, the net $\left(\widehat{\Psi}(u_\lambda^{k,l})(v,w)\right)_{k,l}$ converges weakly  to $\phi^{k,l}(v,w)$. Being $M_n$ a finite dimensional space, this convergence turns out to be strong and now we can also forget the coordinates and look at the whole picture again. So we have $\widehat{\Psi}_m(u_\lambda)(v, w) \to \phi(v,w)$, for all $(v,w)\in V\times W$.

Since $\widehat{\Psi}$ is a complete contraction, we know $\|\widehat{\Psi}_m(u_\lambda)\|_{M_m\left(\mathcal{N}(V\times W,M_n)\right)}\le 1$ and with an appealing to
Lemma \ref{convergencia nuclear-integral} we derive that $\|\phi\|_{M_m\left(\mathcal{I}(V\times W,M_n)\right)}\le 1$.

It only remains to prove that $S_{int}$ is surjective. Let $f\in \left((V\overset\vee\otimes W)\overset\vee\otimes T_n\right)^*$. The surjectivity of $S$ tells us that there exists $\phi\in \mathcal{JCB}(V\times W,M_n)$ such that $\Phi^*(f)=S(\phi)$. Moreover,  for finite dimensional spaces $F_1\in V$ and $F_2\in W$ with canonical inclusions $j_1:F_1\hookrightarrow V$ and $j_2:F_2\hookrightarrow W$ it holds
$$
\Phi^*(f\circ (j_1,j_2))=S(\phi\circ (j_1,j_2)).
$$
Since $\phi\circ (j_1,j_2)$ belongs to $\mathcal{N}(F_1\times F_2, M_n)\cong \mathcal{I}(F_1\times F_2, M_n)$ it is clear that $S_{int}(\phi\circ (j_1,j_2))= f\circ (j_1,j_2)$.

Hence,
\begin{eqnarray*}
\|\phi\circ (j_1,j_2)\|_{\mathcal{N}(F_1\times F_2, M_n)} & = & \|\phi\circ (j_1,j_2)\|_{\mathcal{I}(F_1\times F_2, M_n)} = \|S_{int}(\phi\circ (j_1,j_2))\|\\
& =  & \|f\circ (j_1,j_2)\| \le \|f\|.
\end{eqnarray*}

Thus, $\phi\in \mathcal{I}(V\times W, M_n)$ with $\|\phi\|_{\mathcal{I}(V\times W, M_n)}\le \|f\|$.
\end{proof}

Now we can prove  the general result $
\mathcal{I}(V\times W, X)\cong \mathcal{L_{\mathcal{I}}}(V\overset\vee\otimes W,X):
$

\begin{proof}[Proof of Theorem \ref{thm: main for integrals}]
Let $\phi\in \mathcal{I}(V\times W, X)$ and consider the associated linear application
$$
L_\phi:V\otimes W\to X.
$$ We begin by proving that $L_\phi$ is completely bounded from $(V\otimes W, \vee)$ to $X$. This will allows us to extend $L_\phi$ to $ V\overset\vee\otimes W$. For that, we need to find a common bound for the norms of the mappings
$$
(L_\phi)_n: M_n(V\otimes W,\vee)\to M_n(X).
$$
Let $u\in M_n(V\otimes W)$. By \cite[Lemma 2.3.4]{ER-libro}, there exists $\xi\in \mathcal{CB}(X,M_n)$ with $\|\xi\|_{cb}\le 1$ satisfying
$$
\|(L_\phi)_n(u)\|_{M_n(X)} = \|\xi_n\left((L_\phi)_n(u)\right)\|_{M_n(M_n)}=\|(\xi\circ L_\phi)_n(u)\|_{M_n(M_n)}= \|(L_{\xi\circ\phi})_n(u)\|_{M_n(M_n)}.
$$
Since $\xi\circ\phi:V\times W\to M_n$ is completely integral, we know from  Proposition \ref{prop: main for integrals into matrices} that $L_{\xi\circ\phi}$ belongs to $\mathcal{L_{\mathcal{I}}}(V\overset\vee\otimes W,M_n)$. Thus, $L_{\xi\circ\phi}\in\mathcal{CB}(V\overset\vee\otimes W,M_n)$ and therefore,
\begin{equation*}
\|(L_{\xi\circ\phi})_n(u)\|_{M_n(M_n)} \le \|L_{\xi\circ\phi}\|_{cb}\cdot \|u\|_{M_n(V\overset\vee\otimes W)}\le \|\xi\|_{cb}\cdot  \|\phi\|_{\mathcal{I}(V\times W,X)}\cdot \|u\|_{M_n(V\overset\vee\otimes W)}.
\end{equation*}

This yields that $L_\phi\in \mathcal{CB}(V\overset\vee\otimes W,X)$. Let us prove now that, indeed, given
 $\phi\in M_n\left(\mathcal{I}(V\times W, X)\right)$,  $L_\phi$ belongs to  $M_n\left(\mathcal{L}_{\mathcal{I}}(V\overset\vee\otimes W,X)\right)$. To that end we need to compute the nuclear norms of its restrictions to finite dimensional spaces. Let $F\subset V\overset\vee\otimes W$ be a finite dimensional subspace. There exist finite dimensional subspaces $F_1\in V$ and $F_2\in W$ such that $F\subset F_1\overset\vee\otimes F_2$. The complete isometry $(F_1\overset\vee\otimes F_2)^*\cong F_1^*\widehat\otimes F_2^*$ (see, for instance, \cite[(15.4.1)]{ER-libro}) yields that $\mathcal{N}(F_1\times F_2, X)\cong \mathcal{L_\mathcal{N}}(F_1\overset\vee\otimes F_2,X)$.  Thus,
$$
\|L_\phi|_F\|_{M_n\left(\mathcal{L}_{\mathcal{N}}(F,X)\right)}\le \left\|L_\phi|_{F_1\overset\vee\otimes F_2}\right\|_{M_n\left(\mathcal{L}_{\mathcal{N}}(F_1\overset\vee\otimes F_2,X)\right)}= \|\phi|_{F_1\times F_2}\|_{M_n\left(\mathcal{N}(F_1\times F_2,X)\right)}\le \|\phi\|_{M_n\left(\mathcal{I}(V\times W, X)\right)}
$$
Hence, it follows that $L_\phi\in M_n\left(\mathcal{L}_{\mathcal{I}}(V\overset\vee\otimes W,X)\right)$ and $\|L_\phi\|_{M_n\left(\mathcal{L}_{\mathcal{I}}(V\overset\vee\otimes W,X)\right)}\le \|\phi\|_{M_n\left({\mathcal{I}}(V\times W, X)\right)}$.

To prove the opposite contention,  consider   $L\in M_n\left(\mathcal{L}_{\mathcal{I}}(V\overset\vee\otimes W,X)\right)$.  It is plain to see that $L$ is $L_\phi$, for some $\phi\in M_n\left(\mathcal{JCB}(V\times W, X)\right). $ The same argument as above shows that for any finite dimensional subspaces $F_1\in V$ and $F_2\in W$,
$$
\|\phi|_{F_1\times F_2}\|_{M_n\left(\mathcal{N}(F_1\times F_2,X)\right)}= \left\|L_\phi|_{F_1\overset\vee\otimes F_2}\right\|_{M_n\left(\mathcal{L}_{\mathcal{N}}(F_1\overset\vee\otimes F_2,X)\right)}\le \|L_\phi\|_{M_n\left(\mathcal{L}_{\mathcal{I}}(V\overset\vee\otimes W,X)\right)}.
$$
Consequently, $\phi\in M_n\left(\mathcal{I}(V\times W, X)\right)$ and $ \|\phi\|_{M_n\left(\mathcal{I}(V\times W, X)\right)}\le \|L_\phi\|_{M_n\left(\mathcal{L}_{\mathcal{I}}(V\overset\vee\otimes W,X)\right)}$.
\end{proof}

 \subsection*{The scalar valued case}
 Let $V$ and $W$ be operator spaces and let $\nu$ be the linear isomorphism in (\ref{linear isomorphisms}).  As a corollary of   Theorem \ref{thm: main for integrals} we have that  $\nu$ induces the following  complete isometry:

\begin{proposition}\label{prop: integrales=dual del inyectivo}
 $
\mathcal{I}(V\times W)\cong (V\overset\vee\otimes W)^*.
$
\end{proposition}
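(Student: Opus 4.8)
The plan is to read the statement off Theorem \ref{thm: main for integrals} by taking $X=\mathbb{C}$, and then to identify the resulting space of completely integral linear functionals with the operator space dual. Since, by our convention, $\mathcal{I}(V\times W)=\mathcal{I}(V\times W,\mathbb{C})$, Theorem \ref{thm: main for integrals} gives at once a complete isometry $\mathcal{I}(V\times W)\cong \mathcal{L_{\mathcal{I}}}(V\overset\vee\otimes W,\mathbb{C})$. Thus the whole matter reduces to establishing, for an arbitrary operator space $Z$, a complete isometry $\mathcal{L_{\mathcal{I}}}(Z,\mathbb{C})\cong Z^*$; applying this with $Z=V\overset\vee\otimes W$ then finishes the proof.

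First I would treat the completely nuclear functionals. Because both tensor norms are cross norms, the canonical identifications $Z^*\widehat\otimes\mathbb{C}\cong Z^*$ and $Z^*\overset\vee\otimes\mathbb{C}\cong Z^*$ turn the defining complete contraction $L_\Psi\colon Z^*\widehat\otimes\mathbb{C}\to Z^*\overset\vee\otimes\mathbb{C}\hookrightarrow \mathcal{CB}(Z,\mathbb{C})=Z^*$ into the identity map of $Z^*$. In particular $L_\Psi$ is injective, so $\ker L_\Psi=0$, and therefore $\mathcal{L_{\mathcal{N}}}(Z,\mathbb{C})\cong Z^*$ completely isometrically. The same computation applies verbatim to every finite dimensional subspace $F$, giving $\mathcal{L_{\mathcal{N}}}(F,\mathbb{C})\cong F^*$ and hence $M_m(\mathcal{L_{\mathcal{N}}}(F,\mathbb{C}))\cong \mathcal{CB}(F,M_m)$ at each matrix level $m$.

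Next I would pass from nuclear to integral. By the definition of the matrix norm on $\mathcal{L_{\mathcal{I}}}$, for $\varphi\in M_m(\mathcal{L_{\mathcal{I}}}(Z,\mathbb{C}))$ one has
$$
\|\varphi\|_{M_m(\mathcal{L_{\mathcal{I}}}(Z,\mathbb{C}))}=\sup_{F}\|\varphi|_F\|_{M_m(\mathcal{L_{\mathcal{N}}}(F,\mathbb{C}))}=\sup_{F}\|\varphi|_F\|_{\mathcal{CB}(F,M_m)}=\|\varphi\|_{\mathcal{CB}(Z,M_m)},
$$
the supremum running over all finite dimensional subspaces $F$ of $Z$, where the last equality is just the fact that the completely bounded norm is recovered from restrictions to finite dimensional subspaces. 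Consequently every completely bounded functional is completely integral, and the identity map $\mathcal{L_{\mathcal{I}}}(Z,\mathbb{C})\to Z^*$ is a surjective complete isometry. Combined with the reduction of the first paragraph, this yields $\mathcal{I}(V\times W)\cong (V\overset\vee\otimes W)^*$.

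I expect no serious obstacle: the only point needing care is that the identification is a complete isometry rather than a mere isometry, which is precisely why the nuclear step is carried out at all matrix levels. As a shortcut one may instead invoke \cite[Corollary 12.3.4]{ER-libro} with the finite dimensional space $M_1=T_1^*=\mathbb{C}$, obtaining directly $\mathcal{L_{\mathcal{I}}}(Z,\mathbb{C})=\mathcal{L_{\mathcal{I}}}(Z,M_1)\cong (Z\overset\vee\otimes T_1)^*=Z^*$; equivalently, the statement is exactly the case $n=1$ of Proposition \ref{prop: main for integrals into matrices}, since $M_1=T_1=\mathbb{C}$ forces $((V\overset\vee\otimes W)\overset\vee\otimes T_1)^*=(V\overset\vee\otimes W)^*$.
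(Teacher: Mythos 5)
Your proposal is correct and follows the same route as the paper, which states this proposition as an immediate corollary of Theorem \ref{thm: main for integrals} with $X=\mathbb{C}$ (equivalently, the case $n=1$ of Proposition \ref{prop: main for integrals into matrices}). The only extra content you supply is the explicit verification that $\mathcal{L_{\mathcal{I}}}(Z,\mathbb{C})\cong Z^*$ completely isometrically, which the paper leaves implicit and which your finite-dimensional-restriction argument handles correctly.
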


In contrast,  in the case of the nuclear  bilinear ideal we have:
\begin{proposition}\label{prop: nucleares not dual de tensor}
The following are equivalent:
\begin{enumerate}
\item[(i)] There exists an operator space tensor norm $\alpha$ such that $\mathcal{N}(V\times W)\cong (V\overset\alpha\otimes W)^*$.
\item[(ii)] $\mathcal{N}(V\times W)=\mathcal{I}(V\times W)$.
\end{enumerate}
In this case, $\alpha$ coincides with  the injective operator space tensor norm.
\end{proposition}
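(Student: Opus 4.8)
The plan is to prove the two implications directly, the essential tools being that the injective and projective norms are the extreme operator space tensor norms and the identification $\mathcal{I}(V\times W)\cong(V\overset\vee\otimes W)^*$ from Proposition \ref{prop: integrales=dual del inyectivo}. The implication (ii)$\Rightarrow$(i) is immediate: if $\mathcal{N}(V\times W)=\mathcal{I}(V\times W)$, then Proposition \ref{prop: integrales=dual del inyectivo} already exhibits the complete isometry $\mathcal{N}(V\times W)\cong(V\overset\vee\otimes W)^*$, so (i) holds with $\alpha$ the injective tensor norm, which also proves the final assertion in that direction.

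For (i)$\Rightarrow$(ii), suppose $\mathcal{N}(V\times W)\cong(V\overset\alpha\otimes W)^*$ for some operator space tensor norm $\alpha$. Since the injective and projective norms are respectively the smallest and the largest operator space tensor norms, we have $\|u\|_\vee\le\alpha(u)\le\|u\|_\wedge$ for every $u$, so the formal identities on the algebraic tensor product induce completely contractive inclusions of duals
$$
(V\overset\vee\otimes W)^*\hookrightarrow(V\overset\alpha\otimes W)^*\hookrightarrow(V\widehat\otimes W)^*.
$$
Under the canonical identifications $(V\overset\vee\otimes W)^*\cong\mathcal{I}(V\times W)$, $(V\overset\alpha\otimes W)^*\cong\mathcal{N}(V\times W)$ and $(V\widehat\otimes W)^*\cong\mathcal{JCB}(V\times W)$ — all realized inside $\mathcal{JCB}(V\times W)$ via linearization, so that these arrows are the set-theoretic inclusions of bilinear forms — the first arrow gives $\mathcal{I}(V\times W)\subseteq\mathcal{N}(V\times W)$ with $\|\cdot\|_{\mathcal{N}}\le\|\cdot\|_{\mathcal{I}}$. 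Proposition \ref{prop: jcb < int <nuc} supplies the reverse inclusion $\mathcal{N}(V\times W)\subseteq\mathcal{I}(V\times W)$ with $\|\cdot\|_{\mathcal{I}}\le\|\cdot\|_{\mathcal{N}}$, and combining the two yields $\mathcal{N}(V\times W)=\mathcal{I}(V\times W)$ completely isometrically, which is (ii). The point demanding care here is the compatibility of the abstract isometry in (i) with the canonical embeddings into $\mathcal{JCB}(V\times W)$; I would make this explicit by verifying that each identification sends a bilinear form to its linearization, so that the contractions above are genuinely inclusions of functionals and the two norm comparisons are on the same underlying maps.

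It remains to identify $\alpha$. Consider the canonical complete contraction $J\colon V\overset\alpha\otimes W\to V\overset\vee\otimes W$, which is the identity on the algebraic tensor product. The equality of norms just obtained means exactly that $J^*$, viewed through the identifications above, is the inclusion $\mathcal{I}(V\times W)\hookrightarrow\mathcal{N}(V\times W)$, hence a surjective complete isometry; consequently $J^{**}$ is a complete isometry as well. Since an operator space embeds completely isometrically into its bidual, for $u$ in the algebraic tensor product one has $\|u\|_\alpha=\|\iota_\alpha(u)\|=\|J^{**}\iota_\alpha(u)\|=\|\iota_\vee(u)\|=\|u\|_\vee$ at every matrix level, whence $\alpha=\vee$. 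I expect this last step to be the main obstacle: the identity $\mathcal{N}=\mathcal{I}$ only compares the \emph{dual} norms, and recovering the tensor norm $\alpha$ itself forces one to pass through the biduals and to exploit the completely isometric embedding of an operator space into its second dual.
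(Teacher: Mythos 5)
Your proof is correct and follows essentially the same route as the paper: (ii)$\Rightarrow$(i) is Proposition \ref{prop: integrales=dual del inyectivo}, and (i)$\Rightarrow$(ii) combines the dual inclusion $(V\overset\vee\otimes W)^*\subset (V\overset\alpha\otimes W)^*$ coming from $\|\cdot\|_\vee\le\alpha$ with the general containment $\mathcal{N}\subset\mathcal{I}$ of Proposition \ref{prop: jcb < int <nuc}. Your bidual argument pinning down $\alpha=\vee$ is a correct elaboration of a point the paper's proof leaves implicit.
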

\begin{proof}   (i) follows from (ii) by  Proposition \ref{prop: integrales=dual del inyectivo}. To prove the other implication,
recall that $\|\cdot\|_{\vee} \leq \|\cdot\|_{\alpha}$ for any operator space tensor norm $\alpha$.  Thus, if (i) holds for some $\alpha$, then $\mathcal{I}(V\times W)\cong (V\overset\vee\otimes W)^*\subset (V\overset\alpha\otimes W)^*\cong \mathcal{N}(V\times W)$.
 \end{proof}

 It is worth noticing that there are examples of  completely  integral scalar valued bilinear mappings which are not completely nuclear (see Example \ref{A completely integral bilinear form which is not completely nuclear}). Thus, the completely nuclear bilinear ideal is not of the type described in Proposition \ref{prop: definition of an ideal through a tensor product}.

Something more can be said about a tensorial representation of $\mathcal N(V\times W)$. First, recall the following definition

\begin{definition}
An operator space $V$ is said to have the \textbf{operator space approximation property (OAP)} if for every $u\in \mathcal K(H)\overset\vee\otimes V$ and for every $\varepsilon >0$ there exists a finite rank mapping $T$ on $V$ such that $\|u-(I\otimes T)(u)\|< \varepsilon$.
\end{definition}

By \cite[Theorem 11.2.5]{ER-libro}, $V$ has OAP if and only if the canonical inclusion $V\widehat\otimes W\hookrightarrow V \overset\vee\otimes W$ is one-to-one, for every operator space $W$ (or just for $V^*$). Recall that the standard translation of this result to the Banach space setting was also valid. As a direct consequence we can state the following:

\begin{proposition} \label{OAP}
If $V^*$ or $W^*$ has OAP then there is a complete isometry:
$$
\mathcal N(V\times W)\cong V^*\widehat\otimes W^*.
$$
\end{proposition}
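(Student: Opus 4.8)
The plan is to unravel the definition of $\mathcal N(V\times W)$ in the scalar-valued case and to reduce the statement to the injectivity of the canonical map $\Phi\colon V^*\widehat\otimes W^*\to V^*\overset\vee\otimes W^*$, which is exactly what the OAP characterization quoted above controls. First I would specialize $X=\mathbb C$ in the definition of completely nuclear bilinear maps. Since $(V^*\widehat\otimes W^*)\widehat\otimes\mathbb C=V^*\widehat\otimes W^*$ and $(V^*\overset\vee\otimes W^*)\overset\vee\otimes\mathbb C=V^*\overset\vee\otimes W^*$, the map $\Psi=\Theta\circ\Phi$ becomes the composition $V^*\widehat\otimes W^*\overset{\Phi}\longrightarrow V^*\overset\vee\otimes W^*\overset{\Theta}\hookrightarrow\mathcal{JCB}(V\times W)$, where $\Theta$ is the natural complete isometry arising from $V^*\overset\vee\otimes W^*\hookrightarrow(V\widehat\otimes W)^*\cong\mathcal{JCB}(V\times W)$.

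Because $\Theta$ is injective (indeed a complete isometry, as recorded before the definition of $\mathcal N$), we have $\ker\Psi=\ker\Phi$, and hence the defining identification reads $\mathcal N(V\times W)\cong(V^*\widehat\otimes W^*)/\ker\Phi$ as a complete isometry. Consequently, the asserted complete isometry $\mathcal N(V\times W)\cong V^*\widehat\otimes W^*$ holds precisely when $\Phi$ is one-to-one.

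At this point I would invoke the characterization stated just above the proposition (\cite[Theorem 11.2.5]{ER-libro}): if $V^*$ has OAP, then the canonical inclusion $V^*\widehat\otimes W^*\hookrightarrow V^*\overset\vee\otimes W^*$ is one-to-one, i.e. $\ker\Phi=\{0\}$, yielding the desired complete isometry. The case in which $W^*$ has OAP follows by symmetry: the flip maps $V^*\widehat\otimes W^*\cong W^*\widehat\otimes V^*$ and $V^*\overset\vee\otimes W^*\cong W^*\overset\vee\otimes V^*$ are complete isometries that intertwine the two corresponding copies of $\Phi$, so OAP of $W^*$ again forces $\Phi$ to be injective.

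There is no substantial obstacle here; the statement is genuinely a direct consequence of the OAP characterization. The only point demanding care is the identification $\ker\Psi=\ker\Phi$ in the scalar case, which rests entirely on the injectivity of $\Theta$, and the bookkeeping of the symmetry isomorphisms so that OAP on either factor produces the same conclusion.
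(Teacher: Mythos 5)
Your argument is correct and is exactly the reasoning the paper intends: the paper presents the proposition as a direct consequence of the quoted OAP characterization, relying precisely on the identifications $\mathcal N(V\times W)\cong (V^*\widehat\otimes W^*)/\ker\Psi$, $\ker\Psi=\ker\Phi$ (via injectivity of $\Theta$), and the injectivity of $\Phi$ granted by OAP of either factor. Nothing further is needed.
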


As an example we can consider a reflexive operator space $V$ such that its dual $V^*$, looked as a Banach space  has the (Banach) approximation property but as an operator space $V^*$ has not OAP (see \cite{Ari,OikRic} for examples of such spaces). In this case, the space of (Banach) nuclear bilinear forms on $V\times V^*$ has a canonical representation as a projective tensor product while the space of completely nuclear bilinear forms has not:
$$
\mathcal N^{B}(V\times V^*)\cong V^*\otimes_{\pi} V^{**}\qquad\textrm{ and }\qquad \mathcal N(V\times V^*)\not\cong V^*\widehat\otimes V^{**}.
$$

\begin{remark}
The argument in Proposition \ref{OAP} can be easily extended to the vector valued case. Hence, we have
$$
\mathcal N(V\times W,X)\cong (V^*\widehat\otimes W^*)\widehat\otimes X,
$$ whether two of the three spaces $V^*$, $W^*$ and $X$ have OAP.
\end{remark}

\bigskip

Looking at the equivalence $\mathcal{JCB}(V\times W)\simeq \mathcal{CB}(V,W^*) $ and taking into account the situation in the Banach space setting,
 we question about the existence of an operator space identification for completely nuclear bilinear/linear mappings  and for completely integral bilinear/linear mappings.

For the nuclear case, a careful look to the definitions of the spaces of completely nuclear bilinear and linear mappings, easily gives the following.

  \begin{proposition}\label{prop: nucleares bilineales=nucleares lineales into dual}
  $
\mathcal{N}(V\times W)\cong \mathcal{L_{\mathcal{N}}}(V,W^*).
$
  \end{proposition}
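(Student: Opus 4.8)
The plan is to unwind both sides as quotients of the same projective tensor product $V^*\widehat\otimes W^*$ and then to identify the two quotient maps. With $X=\mathbb{C}$ the defining map for nuclear bilinear forms becomes $\Psi:V^*\widehat\otimes W^*\to\mathcal{JCB}(V\times W)$, obtained by composing the canonical contraction $\Phi:V^*\widehat\otimes W^*\to V^*\overset\vee\otimes W^*$ with the inclusion $\Theta:V^*\overset\vee\otimes W^*\hookrightarrow\mathcal{JCB}(V\times W)$; thus $\mathcal{N}(V\times W)\cong V^*\widehat\otimes W^*/\ker\Psi$. On the linear side, the completely nuclear operators $\mathcal{L_{\mathcal{N}}}(V,W^*)$ are by definition the image of $L_\Psi:V^*\widehat\otimes W^*\to V^*\overset\vee\otimes W^*\hookrightarrow\mathcal{CB}(V,W^*)$, so $\mathcal{L_{\mathcal{N}}}(V,W^*)\cong V^*\widehat\otimes W^*/\ker L_\Psi$. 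Since both quotients have the same domain, it suffices to show $\ker\Psi=\ker L_\Psi$, with the induced embeddings into $\mathcal{JCB}(V\times W)$ and $\mathcal{CB}(V,W^*)$ matching under a complete isometry.

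The bridge is the standard complete isometry $\nu:\mathcal{JCB}(V\times W)\cong\mathcal{CB}(V,W^*)$ coming from (\ref{linear isomorphisms}) (equivalently, the identification $\mathcal{JCB}(V\times W,X)\cong\mathcal{CB}(V,\mathcal{CB}(W,X))$ with $X=\mathbb{C}$). First I would check that $\nu$ intertwines $\Theta$ with the inclusion $V^*\overset\vee\otimes W^*\hookrightarrow\mathcal{CB}(V,W^*)$ used to define $L_\Psi$. It is enough to verify this on elementary tensors: $\Theta(v^*\otimes w^*)$ is the bilinear form $(v,w)\mapsto v^*(v)\,w^*(w)$, whose image under $\nu$ is the rank-one map $v\mapsto v^*(v)\,w^*\in W^*$, which is exactly the element $v^*\otimes w^*$ viewed in $\mathcal{CB}(V,W^*)$. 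Because both $\nu\circ\Theta$ and the inclusion $V^*\overset\vee\otimes W^*\hookrightarrow\mathcal{CB}(V,W^*)$ are completely contractive and agree on the dense subspace $V^*\otimes W^*$, they coincide on all of $V^*\overset\vee\otimes W^*$.

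Consequently $\nu\circ\Psi=L_\Psi$, so the two maps out of $V^*\widehat\otimes W^*$ differ only by the complete isometry $\nu$ on the target. In particular $\ker\Psi=\ker L_\Psi$, and passing to quotients yields a complete isometry $V^*\widehat\otimes W^*/\ker\Psi\cong V^*\widehat\otimes W^*/\ker L_\Psi$, that is $\mathcal{N}(V\times W)\cong\mathcal{L_{\mathcal{N}}}(V,W^*)$, with the matrix norms preserved because the quotient operator space structure is determined by the domain and the kernel alone.

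The main point to get right --- though it is more bookkeeping than genuine difficulty --- is the compatibility of the several canonical identifications hidden inside $\Theta$ (the chain $V^*\overset\vee\otimes W^*\hookrightarrow(V\widehat\otimes W)^*$ together with $(V\widehat\otimes W)^*\cong\mathcal{JCB}(V\times W)$) with the linear identification $(V\widehat\otimes W)^*\cong\mathcal{CB}(V,W^*)$. Once one confirms that all of these reduce, on elementary tensors, to the same rank-one prescription, the equality of kernels is automatic and the complete isometry of the two quotients follows formally.
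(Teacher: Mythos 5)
Your argument is correct and is exactly the ``careful look at the definitions'' that the paper invokes without writing out: both spaces are quotients of $V^*\widehat\otimes W^*$ by the kernels of $\Psi$ and $L_\Psi$, and checking on elementary tensors that $\nu\circ\Psi=L_\Psi$ identifies the two kernels and hence the two quotient operator spaces. Nothing in your write-up deviates from the paper's intended route, and the density/contractivity argument you use to pass from elementary tensors to all of $V^*\overset\vee\otimes W^*$ is the right way to make it rigorous.
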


The situation for completely integral mappings is quite different: since $\mathcal{L_{\mathcal{I}}}(V, W^*)$ is not always completely isometric to $(V\overset\vee\otimes W)^*$ \cite[Section 12.3]{ER-libro} then neither the spaces $\mathcal{I}(V\times W)$ and $\mathcal{L_{\mathcal{I}}}(V, W^*)$ are  always completely isometric. In the Banach space setting, the space of integral bilinear forms from two Banach spaces is isometrically isomorphic to the space of integral linear mappings from one of the spaces to the  dual of the other (see, for instance, \cite[Proposition 3.22]{Ryan-libro}). The hidden reason behind this different behavior is the  Principle of Local Reflexivity, which is valid for every Banach space  while its operator space version does not always hold (see \cite[Section 14.3]{ER-libro} or \cite[Definition 18.1]{Pisier-libro} for a precise definition). Indeed,  \cite[Theorem 14.3.1]{ER-libro} along with Proposition \ref{prop: integrales=dual del inyectivo} give us the statement below.
\begin{proposition}\label{prop: integrales bilineales neq integrales lineales into dual}
Let $W$ be an operator space.Then the following are equivalent:
\begin{enumerate}
\item[(i)] $W$ is locally reflexive.
\item[(ii)] For every operator space $V$, there is a complete isometry $\mathcal{I}(V\times W)\cong\mathcal{L_{\mathcal{I}}}(V, W^*)$.
\end{enumerate}
\end{proposition}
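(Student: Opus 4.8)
The plan is to deduce the equivalence by combining two facts already at our disposal: the unconditional complete isometry $\mathcal{I}(V\times W)\cong(V\overset\vee\otimes W)^*$ from Proposition \ref{prop: integrales=dual del inyectivo}, and the characterization of local reflexivity in \cite[Theorem 14.3.1]{ER-libro}, which I will use in the form: \emph{$W$ is locally reflexive if and only if, for every operator space $V$, the canonical map $(V\overset\vee\otimes W)^*\to\mathcal{L_{\mathcal{I}}}(V,W^*)$ is a complete isometric isomorphism.} Since all the analytic content is carried by that theorem, the remaining task is essentially formal: to align the canonical identifications so that the two abstract complete isometries can legitimately be composed.

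First I would fix these identifications. Under the natural complete isometry $\mathcal{JCB}(V\times W)\cong\mathcal{CB}(V,W^*)$ recalled in the Preliminaries, a bilinear form $\phi$ corresponds to the linear map $v\mapsto\phi(v,\cdot)$. On the other hand, the isometry of Proposition \ref{prop: integrales=dual del inyectivo} sends $\phi\in\mathcal{I}(V\times W)$ to its linearization $L_\phi\in(V\overset\vee\otimes W)^*$ determined by $L_\phi(v\otimes w)=\phi(v,w)$. I would then check that the canonical map $(V\overset\vee\otimes W)^*\to\mathcal{L_{\mathcal{I}}}(V,W^*)$ appearing in \cite[Theorem 14.3.1]{ER-libro} is precisely $f\mapsto(v\mapsto f(v\otimes\cdot))$, so that the restriction of $\mathcal{JCB}(V\times W)\cong\mathcal{CB}(V,W^*)$ to $\mathcal{I}(V\times W)$ factors as the Proposition \ref{prop: integrales=dual del inyectivo} isometry followed by that canonical map. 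This compatibility reduces to matching the definition of the linearization $L_\phi$ with that of the linear integral structure on $\mathcal{L_{\mathcal{I}}}(V,W^*)$ (built from $L_\Psi$), and it is the only point that requires genuine care.

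Granting this factorization, both implications are immediate. For (i)$\Rightarrow$(ii): if $W$ is locally reflexive, then \cite[Theorem 14.3.1]{ER-libro} makes the canonical map $(V\overset\vee\otimes W)^*\to\mathcal{L_{\mathcal{I}}}(V,W^*)$ a complete isometry for every $V$, and precomposing with Proposition \ref{prop: integrales=dual del inyectivo} yields the complete isometry $\mathcal{I}(V\times W)\cong\mathcal{L_{\mathcal{I}}}(V,W^*)$. For (ii)$\Rightarrow$(i): if that complete isometry holds for every $V$, then, reading Proposition \ref{prop: integrales=dual del inyectivo} backwards through the same factorization, the canonical map $(V\overset\vee\otimes W)^*\to\mathcal{L_{\mathcal{I}}}(V,W^*)$ is a complete isometric isomorphism for every $V$, whence $W$ is locally reflexive by \cite[Theorem 14.3.1]{ER-libro}.

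The main obstacle I anticipate is therefore not analytic but bookkeeping: verifying that the bilinear route (through the linearization $L_\phi$ and the injective predual) and the linear route (through $L_\Psi$) produce one and the same canonical assignment $f\mapsto(v\mapsto f(v\otimes\cdot))$ on $(V\overset\vee\otimes W)^*$, at all matrix levels. Once this matching of natural transformations is in place, no estimates remain and the equivalence follows directly from \cite[Theorem 14.3.1]{ER-libro}.
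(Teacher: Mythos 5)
Your proposal is correct and follows essentially the same route as the paper, which derives the proposition in one line by combining Proposition \ref{prop: integrales=dual del inyectivo} with \cite[Theorem 14.3.1]{ER-libro}, exactly as you do. Your extra care in checking that the bilinear and linear canonical identifications compose to the standard assignment $f\mapsto(v\mapsto f(v\otimes\cdot))$ is a sensible (and, for the implication (ii)$\Rightarrow$(i), genuinely needed) elaboration of the same argument.
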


\section{Completely extendible bilinear mappings}\label{sect: compl extendibles}

Within the scope of Banach spaces, the non-validity of a Hahn-Banach theorem for multilinear mappings and homogeneous polynomials motivates the study of the `extendible' elements (those that can be extended to any superspace). We propose and study here a version of this concept for bilinear mappings between operator spaces. Our approach  was strongly inspired by the results and arguments of \cite{Car} (see also \cite{KR}).

\begin{definition}
A mapping $\phi\in \mathcal{JCB}(V\times W,Z)$ is \textbf{completely extendible} if for any operator spaces $X$ and $Y$ such that $V\subset X$, $W\subset Y$ there exists a jointly completely bounded extension $\overline{\phi}:X\times Y\to Z$ of $\phi$.
\end{definition}

By the Representation Theorem for operator spaces (see, for instance \cite[Theorem 2.3.5]{ER-libro}), any operator space can be seen, through a complete isometry, as a subspace of certain $\mathcal{L}(H)$.

Given $V$ and $W$, let us denote the complete isometries that realize these spaces by
$$
\Omega_V:V\to\mathcal L(H_V)\qquad\textrm{ and }\qquad \Omega_W:W\to\mathcal L(H_W).
$$

Following  the idea of \cite[Theorem 3.2]{Car}, we obtain:

\begin{proposition} \label{prop:extendible a L(H)}
A jointly completely bounded mapping $\phi:V\times W\to Z$ is extendible if and only if it can be extended to $\mathcal L(H_V)\times \mathcal L(H_W)$. In this case, if $\phi_0$ is such an extension, then for every $X\supset V$ and $Y\supset W$ there exists an extension  $\overline{\phi}:X\times Y\to Z$ with $\|\overline{\phi}\|_{jcb}\le \|\phi_0\|_{jcb}$.
\end{proposition}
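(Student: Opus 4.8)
The plan is to prove Proposition \ref{prop:extendible a L(H)} by reducing the general extension problem to the single ``universal'' situation in which the ambient spaces are $\mathcal L(H_V)$ and $\mathcal L(H_W)$, exploiting the injectivity of the spaces $\mathcal L(H)$ through the extension property stated in Remark \ref{extensiones}. One implication is trivial: if $\phi$ extends to every pair of superspaces $X\times Y$, then in particular it extends to $\mathcal L(H_V)\times\mathcal L(H_W)$, since $V\subset\mathcal L(H_V)$ and $W\subset\mathcal L(H_W)$ via the complete isometries $\Omega_V,\Omega_W$. The substance of the proposition is the converse, together with the norm control $\|\overline\phi\|_{jcb}\le\|\phi_0\|_{jcb}$.

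For the converse, suppose $\phi_0:\mathcal L(H_V)\times\mathcal L(H_W)\to Z$ is a jointly completely bounded extension of $\phi$, and let $X\supset V$, $Y\supset W$ be arbitrary operator spaces. First I would pass to concrete realizations $X\subset\mathcal L(H_X)$ and $Y\subset\mathcal L(H_Y)$ using the Representation Theorem. The key tool is the second form of the extension property in Remark \ref{extensiones}: the complete isometry $\Omega_V:V\to\mathcal L(H_V)$, viewed as a completely bounded map from the subspace $V\subset\mathcal L(H_X)$ into $\mathcal L(H_V)$, extends to a completely bounded map $E_V:\mathcal L(H_X)\to\mathcal L(H_V)$ with $\|E_V\|_{cb}=\|\Omega_V\|_{cb}=1$, and similarly one obtains $E_W:\mathcal L(H_Y)\to\mathcal L(H_W)$ with $\|E_W\|_{cb}=1$. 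These maps restrict to (the realizations of) the identity on $V$ and $W$ respectively. Then I define the candidate extension by composition,
$$
\overline\phi:=\phi_0\circ(E_V|_X,\,E_W|_Y):X\times Y\to Z,
$$
that is, $\overline\phi(x,y)=\phi_0(E_V(x),E_W(y))$ for $x\in X$, $y\in Y$.

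It remains to check two things. For the extension property, if $x\in V$ and $y\in W$ then $E_V(x)=\Omega_V(x)$ and $E_W(y)=\Omega_W(y)$, so $\overline\phi(x,y)=\phi_0(\Omega_V(x),\Omega_W(y))=\phi(x,y)$ because $\phi_0$ extends $\phi$; hence $\overline\phi$ genuinely extends $\phi$. For the norm estimate, I invoke the composition Lemma stated at the beginning of Section \ref{sect: bilinear ideals} (with $n=1$, $s=\mathrm{id}_Z$, $r_1=E_V|_X$, $r_2=E_W|_Y$), which gives
$$
\|\overline\phi\|_{jcb}\le\|\phi_0\|_{jcb}\cdot\|E_V|_X\|_{cb}\cdot\|E_W|_Y\|_{cb}\le\|\phi_0\|_{jcb},
$$
since the restrictions of $E_V,E_W$ are complete contractions. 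In particular $\overline\phi$ is jointly completely bounded, completing the proof.

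The step I expect to require the most care is the correct application of Remark \ref{extensiones}: one must check that the completely bounded extensions $E_V,E_W$ exist into $\mathcal L(H_V),\mathcal L(H_W)$ (not merely into some abstract superspace) and do so without increasing the completely bounded norm, which is precisely what the injectivity of $\mathcal L(H)$ guarantees in that remark. A minor bookkeeping point is to keep the identifications of $V$ with $\Omega_V(V)$ and with its copy inside $\mathcal L(H_X)$ consistent, so that $E_V$ really restricts to $\Omega_V$ on $V$; once the realizations are fixed this is automatic, but it is the kind of identification that is easy to state imprecisely. No genuinely hard analytic obstacle arises beyond this, since all the heavy lifting is done by the operator-space Hahn--Banach/injectivity result and the composition estimate already available.
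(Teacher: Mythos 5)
Your proof is correct and follows essentially the same route as the paper: extend $\Omega_V$ and $\Omega_W$ to complete contractions on the superspaces via the injectivity of $\mathcal L(H)$ (Remark \ref{extensiones}) and compose with $\phi_0$, obtaining the norm bound from the composition estimate. Your detour through $\mathcal L(H_X)$ and $\mathcal L(H_Y)$ before restricting back to $X$ and $Y$ is harmless and immaterial; the paper simply applies the extension property directly to $V\subset X$ and $W\subset Y$.
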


\begin{proof} Let $\phi_0: \mathcal L(H_V)\times \mathcal L(H_W)\to Z$ be an extension of $\phi$.
By Remark \ref{extensiones}, $\Omega_V$ and $\Omega_W$  have complete contractive extensions $\overline{\Omega}_V:X\to \mathcal L(H_V)$ and $\overline{\Omega}_W:Y\to \mathcal L(H_W)$. Then, $\overline{\phi}:X\times Y\to Z$ given by
$$
\overline{\phi}(x,y)=\phi_0(\overline{\Omega}_V(x), \overline{\Omega}_W(y)),\quad\textrm{for all }x\in X,\ y\in Y,
$$ extends $\phi$ and
$$
\|\overline{\phi}\|_{jcb}\le \|\phi_0\|_{jcb}\cdot \|\overline{\Omega}_V\|_{cb}\cdot \|\overline{\Omega}_W\|_{cb} = \|\phi_0\|_{jcb}.
$$
\end{proof}

Let
$$
\mathcal{E}(V\times W, Z)=\left\{\phi\in \mathcal{JCB}(V\times W, Z):\ \phi \textrm{ is extendible}\right\}.
$$

It is clear that $\mathcal{E}(V\times W, Z)$ is a subspace of $\mathcal{JCB}(V\times W, Z)$. Moreover, it is an operator space if we consider the following norm: for each $\phi\in M_n\left(\mathcal{E}(V\times W, Z)\right)$, let $\|\phi\|_{\mathcal{E}}$  be the infimum of the numbers $C>0$ such that for all $X\supset V$ and $Y\supset W$ there exists $\overline{\phi}\in M_n\left(\mathcal{JCB}(X\times Y, Z)\right)$ which extends $\phi$,  $\|\overline\phi\|_{jcb}\le C$.  The  previous proposition tells us that we can define equivalently
$$
\|\phi\|_{\mathcal{E}} =\inf\left\{ \|\phi_0\|_{jcb}:\ \phi_0\textrm{ extension of }\phi\textrm{ to }M_n\left(\mathcal{JCB}(\mathcal L(H_V)\times\mathcal L(H_W),Z)\right)\right\}.
$$

\begin{proposition} \label{prop:extendible es ideal}
$\mathcal E$ is an operator space bilinear ideal.
\end{proposition}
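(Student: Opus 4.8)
The plan is to verify the two defining properties of an operator space bilinear ideal from Definition \ref{def: bilinear ideal} for $\mathcal{E}$. The containment of $\mathcal{E}(V\times W, Z)$ inside $\mathcal{JCB}(V\times W, Z)$ as a linear subspace is immediate from the definition, and finite type continuous bilinear maps are extendible because they factor through finite dimensional spaces and extend trivially (a finite type map $\sum v_i^*\otimes w_i^*\otimes z_i$ extends by taking Hahn--Banach extensions of each $v_i^*$ and $w_i^*$ to any superspaces). So the content of the proof lies in establishing property (b): that composing on the outside with $s\in \mathcal{CB}(X,Y)$ and on the inside with $r_1\in\mathcal{CB}(U_1,V)$, $r_2\in\mathcal{CB}(U_2,W)$ preserves extendibility with the correct norm estimate on $\|\cdot\|_{\mathcal{E}}$.

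First I would fix $\phi\in M_n(\mathcal{E}(V\times W, Z))$ together with the maps $r_1, r_2, s$, and use Proposition \ref{prop:extendible a L(H)} to work with an extension $\phi_0$ defined on $\mathcal L(H_V)\times\mathcal L(H_W)$ whose $jcb$-norm nearly realizes $\|\phi\|_{\mathcal{E}}$. The key construction is to produce an extension of $s_n\circ\phi\circ(r_1,r_2)$ on all of $\mathcal L(H_{U_1})\times\mathcal L(H_{U_2})$. To do this I would embed $U_1\subset \mathcal L(H_{U_1})$ and $U_2\subset\mathcal L(H_{U_2})$ completely isometrically, then apply the second form of the extension property in Remark \ref{extensiones}: since $r_1:U_1\to V\subset\mathcal L(H_V)$ is completely bounded, it extends to $\overline{r_1}:\mathcal L(H_{U_1})\to\mathcal L(H_V)$ with $\|\overline{r_1}\|_{cb}=\|r_1\|_{cb}$, and similarly for $r_2$. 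The desired extension is then
$$
\psi_0(a,b)=s_n\bigl(\phi_0(\overline{r_1}(a),\overline{r_2}(b))\bigr),\qquad a\in\mathcal L(H_{U_1}),\ b\in\mathcal L(H_{U_2}),
$$
which restricts to $s_n\circ\phi\circ(r_1,r_2)$ on $U_1\times U_2$ because $\overline{r_1}|_{U_1}=r_1$ and $\overline{r_2}|_{U_2}=r_2$.

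With this $\psi_0$ in hand, the norm estimate follows from the composition Lemma stated at the start of Section \ref{sect: bilinear ideals}: I would apply it to bound $\|\psi_0\|_{jcb}\le \|s\|_{cb}\cdot\|\phi_0\|_{jcb}\cdot\|\overline{r_1}\|_{cb}\cdot\|\overline{r_2}\|_{cb}=\|s\|_{cb}\cdot\|\phi_0\|_{jcb}\cdot\|r_1\|_{cb}\cdot\|r_2\|_{cb}$. Since $\psi_0$ is a valid extension of $s_n\circ\phi\circ(r_1,r_2)$ to $\mathcal L(H_{U_1})\times\mathcal L(H_{U_2})$, the equivalent formula for $\|\cdot\|_{\mathcal{E}}$ given just before the proposition shows that $s_n\circ\phi\circ(r_1,r_2)\in M_n(\mathcal{E}(U_1\times U_2, Y))$ with $\|s_n\circ\phi\circ(r_1,r_2)\|_{\mathcal{E}}\le\|s\|_{cb}\cdot\|\phi_0\|_{jcb}\cdot\|r_1\|_{cb}\cdot\|r_2\|_{cb}$. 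Taking the infimum over all extensions $\phi_0$ of $\phi$ replaces $\|\phi_0\|_{jcb}$ by $\|\phi\|_{\mathcal{E}}$, yielding the required inequality.

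I do not anticipate a genuine obstacle here; the one point demanding care is the correct bookkeeping of matrix amplifications, namely checking that $(\psi_0)_m$ really equals $s_{n\cdot m^2}\circ(\phi_0)_m\circ((\overline{r_1})_m,(\overline{r_2})_m)$ so that the composition Lemma applies verbatim at every matrix level and hence controls the full $jcb$-norm rather than just the ground level. This is the same identity recorded in the proof of that Lemma, so the verification is routine but should be stated. The structural reason the argument works is that extendibility is defined precisely so as to be compatible with the extension property of Remark \ref{extensiones}, and pre/post-composition interacts transparently with extensions to the ambient $\mathcal L(H)$ spaces.
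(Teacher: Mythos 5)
Your argument is correct and follows essentially the same route as the paper: take an extension $\phi_0$ of $\phi$ to $\mathcal L(H_V)\times\mathcal L(H_W)$ nearly realizing $\|\phi\|_{\mathcal E}$, extend $r_1,r_2$ to the ambient $\mathcal L(H_{U_1})$, $\mathcal L(H_{U_2})$ via Remark \ref{extensiones}, compose, and invoke the composition lemma for the norm bound. The only item you leave implicit is condition (a), namely $\|\phi\|_{jcb}\le\|\phi\|_{\mathcal E}$, which as you indicate is immediate since the $jcb$-norm of any extension dominates that of its restriction.
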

\begin{proof}  Since continuous functionals are completely extendible, it is clear that all finite type continuous bilinear mappings belong to this subspace.

(a) For any $\phi\in M_n\left(\mathcal E(V\times W,Z)\right)$ we know that $\|\phi\|_{jcb}\le \|\phi_0\|_{jcb}$ for every extension $\phi_0\in M_n\left(\mathcal{JCB}(\mathcal L(H_V)\times\mathcal L(H_W),Z)\right)$. Thus,   $\|\phi\|_{jcb}\le \|\phi\|_{\mathcal{E}}$.

(b) Consider  $\phi\in M_n\left(\mathcal E(V\times W,Z)\right)$, $r_1\in\mathcal{CB}(U_1,V)$,  $r_2\in\mathcal{CB}(U_2,W)$ and $s\in\mathcal{CB}(Z,Y)$.
  Since $\phi$ is a matrix of completely extendible maps, given $\varepsilon >0$, there exists  an extension $\phi_0\in M_n\left(\mathcal{JCB}(\mathcal{L}(H_{V})\times\mathcal{L}(H_{W}), Z)\right)$ such that $\|\phi_0\|_{jcb}\le \|\phi\|_{\mathcal{E}}+\varepsilon$.

 According to Remark \ref{extensiones},  let  $R_1:\mathcal{L}(H_{U_1}) \rightarrow  \mathcal{L}(H_{V})$ and $ R_2:\mathcal{L}(H_{U_2}) \rightarrow  \mathcal{L}(H_{W})$ be completely bounded extensions of
$r_1$ and $r_2$, respectively, with $\|r_1\|_{cb}=\|R_1\|_{cb}$ and $\|r_2\|_{cb}=\|R_2\|_{cb}$. Then,    $s_n\circ\phi_0\circ(R_1, R_2)$ is an extension of $s_n\circ\phi\circ (r_1, r_2)$ to $\mathcal{L}(H_{U_1})\times\mathcal{L}(H_{U_2})$  and
$$
\|s_n\circ\phi_0\circ(R_1, R_2)\|_{jcb}\le \|s\|_{cb}\cdot\|\phi_0\|_{jcb}\cdot \|R_1\|_{cb}\cdot \|R_2\|_{cb}\le \|s\|_{cb}\cdot\left(\|\phi\|_{\mathcal{E}}+\varepsilon\right)\cdot \|r_1\|_{cb}\cdot \|r_2\|_{cb}.
$$
Therefore, $s_n\circ\phi\circ (r_1,r_2)\in M_n\left(\mathcal E(U_1\times U_2,Z)\right)$ and $\|s_n\circ\phi\circ (r_1, r_2)\|_{\mathcal E}\le \|s\|_{cb}\cdot\|\phi\|_{\mathcal E}\cdot\|r_1\|_{cb}\cdot \|r_2\|_{cb}$.
\end{proof}

Motivated by what is done in the Banach space setting (see \cite[Corollary 3.9]{Car} or \cite[Proposition 3]{KR}), we now  define an operator space   tensor norm  $\eta$  such that for any $V, W$,  the dual operator space $(V\overset\eta\otimes W)^*$ coincides with the  scalar-valued completely extendible bilinear mappings $\mathcal E(V\times W)$. To that end,
 consider the tensor product of the canonical operator space inclusions where the range is endowed with the  operator space projective tensor norm:
$$
\Omega_V\otimes\Omega_W:V\otimes W\to \mathcal L(H_V)\widehat\otimes\mathcal L(H_W).
$$
Let $\eta$   be the operator space tensor norm in $V\otimes W$ induced by this application. Thus, for any $u\in M_n\left(V\otimes W\right)$,
$$
\eta(u)=\|(\Omega_V\otimes\Omega_W)_n(u)\|_\wedge.
$$

It is plain to see that $\eta$ is an operator space matrix norm that does not depend on the representations of $\Omega_V$ and $\Omega_W$ but just on the operator space structure of $V$ and $W$. Also, since $\Omega_V$ and $\Omega_W$ are complete isometries it easily follows that $\eta$ is a cross matrix norm. Moreover, it can be proved evidently that $\eta$ is an operator space tensor norm according to Definition \ref{def: operator space tensor norm}.

Let  $V\overset\eta\otimes W$ denote the completion of $(V\otimes W, \eta)$.

\begin{proposition}\label{prop: extendible-tensor}
There is a complete isometry
$$
\left(V\overset\eta\otimes W\right)^*\cong \mathcal E(V\times W).
$$
\end{proposition}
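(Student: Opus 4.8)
The plan is to realize $V\overset\eta\otimes W$ as a completely isometric subspace of the projective tensor product $\mathcal L(H_V)\widehat\otimes\mathcal L(H_W)$ and then dualize. By the very definition of $\eta$, the map $\Omega_V\otimes\Omega_W:(V\otimes W,\eta)\to \mathcal L(H_V)\widehat\otimes\mathcal L(H_W)$ is a complete isometry onto its range, so (after passing to completions) $V\overset\eta\otimes W$ is a closed operator subspace of $\mathcal L(H_V)\widehat\otimes\mathcal L(H_W)$. First I would invoke operator space duality: the adjoint of a completely isometric inclusion is a complete quotient map, so restriction induces a complete quotient map
$$
q:\left(\mathcal L(H_V)\widehat\otimes\mathcal L(H_W)\right)^*\longrightarrow \left(V\overset\eta\otimes W\right)^*.
$$
I would then transport this through the standard identification $\left(\mathcal L(H_V)\widehat\otimes\mathcal L(H_W)\right)^*\cong \mathcal{JCB}(\mathcal L(H_V)\times\mathcal L(H_W))$ recalled in the Preliminaries. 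Under this identification a functional in the domain of $q$ is a jointly completely bounded form $\phi_0$ on $\mathcal L(H_V)\times\mathcal L(H_W)$, and one checks directly that $q$ sends $\phi_0$ to its restriction $\phi_0|_{V\times W}$.

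Next I would match the image of $q$ with $\mathcal E(V\times W)$ as a set, using Proposition \ref{prop:extendible a L(H)}. Surjectivity of $q$ (Hahn--Banach) shows that every $\eta$-continuous functional $\phi$ is the restriction of some $\phi_0\in\mathcal{JCB}(\mathcal L(H_V)\times\mathcal L(H_W))$, hence extends to $\mathcal L(H_V)\times\mathcal L(H_W)$ and is therefore completely extendible. Conversely, if $\phi\in\mathcal E(V\times W)$, then by Proposition \ref{prop:extendible a L(H)} it extends to a jointly completely bounded $\phi_0$ on $\mathcal L(H_V)\times\mathcal L(H_W)$, which is $\eta$-continuous on $V\otimes W$ by construction of $\eta$, so $\phi\in(V\overset\eta\otimes W)^*$. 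This gives the set-theoretic equality $(V\overset\eta\otimes W)^*=\mathcal E(V\times W)$.

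Finally I would check that the norms agree at every matrix level, which is what upgrades the statement to a complete isometry. Since $q$ is a complete quotient map, for $\phi\in M_n\big((V\overset\eta\otimes W)^*\big)$ one has $\|\phi\|=\inf\{\|\Phi\|:q_n(\Phi)=\phi\}$; translating through $M_n\left(\left(\mathcal L(H_V)\widehat\otimes\mathcal L(H_W)\right)^*\right)\cong \mathcal{JCB}(\mathcal L(H_V)\times\mathcal L(H_W),M_n)$ turns this into $\inf\{\|\phi_0\|_{jcb}:\phi_0 \text{ extends } \phi\}$, which is exactly the equivalent description of $\|\phi\|_{\mathcal E}$ given just before the statement. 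The hard part will be precisely this last identification: one must be sure that the quotient norm produced by duality is computed at each level $n$ by extensions $\phi_0$ lying in $M_n\big(\mathcal{JCB}(\mathcal L(H_V)\times\mathcal L(H_W))\big)$ (not merely scalar-valued ones), and that these coincide with the matrix-level extensions used to define $\|\cdot\|_{\mathcal E}$. Verifying that the complete quotient map property transfers cleanly to the $\mathcal{JCB}$ picture at every $n$ is the step that makes the isomorphism completely isometric rather than merely isometric.
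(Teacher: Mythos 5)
Your proof is correct and follows essentially the same route as the paper: both hinge on the completely isometric embedding $V\overset\eta\otimes W\hookrightarrow \mathcal L(H_V)\widehat\otimes\mathcal L(H_W)$ built into the definition of $\eta$, extend functionals off this subspace via Arveson--Wittstock (your ``adjoint of a complete isometry is a complete quotient map'' is exactly that fact, applied at each matrix level since $M_n\bigl((V\overset\eta\otimes W)^*\bigr)\cong\mathcal{CB}(V\overset\eta\otimes W,M_n)$), and match the resulting quotient norm with the description of $\|\cdot\|_{\mathcal E}$ via extensions to $\mathcal L(H_V)\times\mathcal L(H_W)$ from Proposition \ref{prop:extendible a L(H)}. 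The final step you flag as ``the hard part'' does go through cleanly for precisely this reason, and is the content the paper compresses into ``a similar argument shows that the isometry is complete.''
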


\begin{proof}
Let $\varphi\in \left(V\overset\eta\otimes W\right)^*$ and denote by $\phi$ the associated bilinear form, $\phi:V\times W\to \mathbb{C}$. Since $V\overset\eta\otimes W\hookrightarrow \mathcal L(H_V)\widehat\otimes\mathcal L(H_W)$ is a complete isometry, we can see $V\overset\eta\otimes W$ as a subspace of $\mathcal L(H_V)\widehat\otimes\mathcal L(H_W)$. By Remark \ref{extensiones}, $\varphi$ can be extended to $\varphi_0:\mathcal L(H_V)\widehat\otimes\mathcal L(H_W)\to\mathbb{C}$ with $\|\varphi_0\|_{cb}=\|\varphi\|_{cb}$. It is easy to see that the bilinear map $\phi_0:\mathcal L(H_V)\times\mathcal L(H_W)\to\mathbb{C}$ associated to $\varphi_0$ is an extension of $\phi$. Also,
$$
\|\phi_0\|_{jcb}=\|\varphi_0\|_{cb}=\|\varphi\|_{cb}.
$$ Then, $\phi$ is completely extendible and $\|\phi\|_{\mathcal{E}}\le \|\varphi\|$.

Reciprocally, let $\phi\in\mathcal E(V\times W)$ and denote its linear associated by $\varphi:V\otimes W\to \mathbb{C}$. Let $\phi_0: \mathcal L(H_V)\times\mathcal L(H_W)\to\mathbb{C}$ be an extension of $\phi$ and consider its linear associated $\varphi_0\in \left(\mathcal L(H_V)\widehat\otimes\mathcal L(H_W)\right)^*$. Thus, for each $u\in V\otimes W$,
$$
|\varphi(u)|=|\varphi_0(\Omega_V\otimes\Omega_W)(u)|\le \|\varphi_0\|_{cb}\cdot \|(\Omega_V\otimes\Omega_W)(u)\|_\wedge=\|\varphi_0\|_{cb}\cdot\|u\|_\eta.
$$ This implies that $\varphi$ is $\eta$-continuous and so it can be extended continuously to $V\overset\eta\otimes W$. Hence, $\varphi\in \left(V\overset\eta\otimes W\right)^*$ with $\|\varphi\|\le \|\phi\|_{\mathcal{E}}$.

The isometry between $\left(V\overset\eta\otimes W\right)^*$ and $\mathcal E(V\times W)$ is now proved and a similar argument shows that the isometry is complete.
\end{proof}

\section{The symmetrized multiplicatively bounded bilinear ideal}\label{sect: SMB ideal}

   Given a bilinear mapping  $\phi :V\times W \rightarrow Z$, its transposed  $\phi^t  :W\times V\rightarrow Z$ is defined by the relation $\phi^t(w,v)=\phi(v,w)$.
   We will say that an operator space  bilinear ideal $\mathfrak{A}$ is \textbf{symmetric} when satisfies that if  $\phi\in \mathfrak{A}(V\times W,Z)$ then   $ \phi^t\in \mathfrak{A}(W\times V,Z)$  with $\|\phi\|_{\mathfrak A}= \|\phi^t\|_{\mathfrak A}$.

   The bilinear ideals  $\mathcal{JCB}, \,  \mathcal{N},  \, \mathcal{I} $ and $ \mathcal{E}$ are clearly symmetric, while $\mathcal{MB}$ is not (see Example \ref{mb no i}).

   \begin{definition}
A bounded bilinear mapping $\phi: V\times W \rightarrow Z$ is \textbf{symmetrized multiplicatively bounded}, $\phi \in \mathcal{SMB}(V\times W, Z)$
  if it can be decomposed as  $\phi =\phi_1 + \phi_2$ with $\phi_1\in \mathcal{MB}(V\times W,Z)$ and $ \phi_2^t\in \mathcal{MB}(W\times V,Z)$.
  \end{definition}

    The space $\mathcal{SMB}(V\times W, Z)$ is equiped with  an operator space structure through the identification with  the sum  $ \mathcal{MB}(V\times W,Z)+ \ ^t\mathcal{MB}(W\times V,Z)$  in the sense of operator spaces interpolation theory
   (see \cite[Chapter 2]{Pisier AMS}). In this way, the norm of a  matrix $\phi \in M_n(\mathcal{SMB}(V\times W; Z))$  is given by
  $$
\|\phi\|_{{smb}} =\inf \left\{ \|(\phi_1,\phi_2) \|_{M_n(\mathcal{MB}(V\times W,Z)\oplus_1  {{^t}\mathcal{MB}}(W\times V,Z))}: \phi = \phi_1 + \phi_2  \right\}.$$

\begin{proposition} \label{prop: SMB es ideal}
$\mathcal{SMB}$ is a symmetric  operator space bilinear ideal.
\end{proposition}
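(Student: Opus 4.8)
The plan is to verify directly the three requirements of Definition \ref{def: bilinear ideal} together with the symmetry condition, relying throughout on the already established fact that $\mathcal{MB}$ is an operator space bilinear ideal (Proposition \ref{prop maximal ideal}) and on the completely contractive inclusion $\mathcal{MB}(V\times W,Z)\hookrightarrow\mathcal{JCB}(V\times W,Z)$ coming from $\|u\|_h\le\|u\|_\wedge$. First, $\mathcal{SMB}(V\times W,Z)$ is by construction a linear subspace of $\mathcal{JCB}(V\times W,Z)$ and it contains all finite type maps (these lie in $\mathcal{MB}$, so one takes $\phi_2=0$). Its matrix norm is transported from the interpolation sum $\mathcal{MB}(V\times W,Z)+{}^t\mathcal{MB}(W\times V,Z)$, which is an operator space matrix norm by the general theory of sums of a compatible couple (\cite[Chapter 2]{Pisier AMS}); concretely, $M_n(\mathcal{SMB})$ is the image of $M_n(\mathcal{MB}\oplus_1{}^t\mathcal{MB})$ under the addition map, carrying the corresponding quotient norm.

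For condition (a) I would use that transposition is a complete isometry of $\mathcal{JCB}$, so $\|\phi_2\|_{jcb}=\|\phi_2^t\|_{jcb}\le\|\phi_2^t\|_{mb}$; hence both inclusions $\mathcal{MB}(V\times W,Z)\hookrightarrow\mathcal{JCB}(V\times W,Z)$ and ${}^t\mathcal{MB}(W\times V,Z)\hookrightarrow\mathcal{JCB}(V\times W,Z)$ are complete contractions. By the universal property of the $\oplus_1$-sum, $\mathcal{CB}(\mathcal{MB}\oplus_1{}^t\mathcal{MB},\mathcal{JCB})\cong\mathcal{CB}(\mathcal{MB},\mathcal{JCB})\oplus_\infty\mathcal{CB}({}^t\mathcal{MB},\mathcal{JCB})$, the addition map is then a complete contraction. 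Passing to matrix levels and taking the infimum over decompositions $\phi=\phi_1+\phi_2$ yields $\|\phi\|_{jcb}\le\|\phi\|_{smb}$.

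The ideal estimate (b) rests on the elementary transpose identity $\left(s_n\circ\phi_2\circ(r_1,r_2)\right)^t=s_n\circ\phi_2^t\circ(r_2,r_1)$. Given $r_1\in\mathcal{CB}(U_1,V)$, $r_2\in\mathcal{CB}(U_2,W)$ and $s\in\mathcal{CB}(Z,Y)$, I would introduce the diagonal map
\[
L(\phi_1,\phi_2)=\bigl(s_n\circ\phi_1\circ(r_1,r_2),\; s_n\circ\phi_2\circ(r_1,r_2)\bigr)
\]
from $M_n\!\left(\mathcal{MB}(V\times W,Z)\oplus_1{}^t\mathcal{MB}(W\times V,Z)\right)$ into $M_n\!\left(\mathcal{MB}(U_1\times U_2,Y)\oplus_1{}^t\mathcal{MB}(U_2\times U_1,Y)\right)$. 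The first component is handled by the ideal property of $\mathcal{MB}$; for the second, the transpose identity shows that $\left(s_n\circ\phi_2\circ(r_1,r_2)\right)^t=s_n\circ\phi_2^t\circ(r_2,r_1)\in\mathcal{MB}(U_2\times U_1,Y)$, again by the ideal property of $\mathcal{MB}$, so this component indeed lands in ${}^t\mathcal{MB}$ and, read through transposition, is itself an $\mathcal{MB}$-composition. Each component map has cb-norm at most $\|s\|_{cb}\|r_1\|_{cb}\|r_2\|_{cb}$, so (inclusions into an $\oplus_1$-sum being completely isometric and maps out of it having cb-norm the maximum over restrictions) $L$ is completely bounded with $\|L\|_{cb}\le\|s\|_{cb}\|r_1\|_{cb}\|r_2\|_{cb}$. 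Since $s_n\circ\phi\circ(r_1,r_2)$ is the image under addition of $L$ applied to a decomposition of $\phi$, the quotient description of $\|\cdot\|_{smb}$ gives membership in $M_n\!\left(\mathcal{SMB}(U_1\times U_2,Y)\right)$ together with the required inequality.

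Finally, symmetry is essentially built into the construction: if $\phi=\phi_1+\phi_2$ is an $\mathcal{SMB}$-decomposition, then $\phi^t=\phi_2^t+\phi_1^t$, where $\phi_2^t$ is the $\mathcal{MB}$-summand and $(\phi_1^t)^t=\phi_1\in\mathcal{MB}(V\times W,Z)$ exhibits $\phi_1^t$ as the transposed summand. Transposition thus carries the datum $(\phi_1,\phi_2)$ of $\phi$ to the datum $(\phi_2^t,\phi_1^t)$ of $\phi^t$, merely swapping the two summands and transposing each; as both operations are completely isometric between the relevant $\mathcal{MB}$-type spaces, one gets $\|\phi^t\|_{smb}=\|\phi\|_{smb}$ at every matrix level. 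The only genuinely delicate point is the bookkeeping with the operator space $\oplus_1$-sum, namely that diagonal maps on such sums have cb-norm the maximum of the component cb-norms and that maps out of them are governed by the same universal property; once these are in hand, everything reduces to the ideal property of $\mathcal{MB}$ and the transpose identity.
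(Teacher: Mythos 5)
Your proof is correct and follows essentially the same route as the paper, which simply invokes \cite[Proposition 2.1]{Pisier AMS} to assert that the sum $\mathfrak A_1+\mathfrak A_2$ of two operator space bilinear ideals is again one, applied to $\mathcal{SMB}=\mathcal{MB}+{}^t\mathcal{MB}$. Your write-up supplies exactly the details the paper leaves implicit --- that ${}^t\mathcal{MB}$ is itself a bilinear ideal via the transpose identity, the $\oplus_1$/quotient bookkeeping for conditions (a) and (b), and the summand-swapping argument for symmetry --- all of which are sound.
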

\begin{proof} By means of \cite[Proposition 2.1]{Pisier AMS} it is easy to see that whenever $\mathfrak A_1$ and $\mathfrak A_2$ are operator space bilinear ideals then the same holds for $\mathfrak A_1 + \mathfrak A_2$. Hence, this is valid for $\mathcal{SMB}=\mathcal{MB} + \ ^t\mathcal{MB}$.
\end{proof}

We denote by $(V\overset{h}\otimes W)\cap (W\overset{h}\otimes V)$ the set of elements $u$ in $V\overset{h}\otimes W$ such that $u^t$ belongs to $W\overset{h}\otimes V$. Appealing again to interpolation theory, we can see  $(V\overset{h}\otimes W)\cap (W\overset{h}\otimes V)$ as an operator space with the structure inherited by the canonical inclusion in $(V\overset{h}\otimes W)\oplus_\infty (W\overset{h}\otimes V)$.

The completely isometric identity $(X\cap Y)^*\cong X^*+Y^*$ \cite[page 23]{Pisier AMS} applied to our case says:
$$
\mathcal{SMB}(V\times W)\cong \Big((V\overset{h}\otimes W)\cap (W\overset{h}\otimes V)\Big)^*,
$$ completely isometrically. In the vector-valued case, there is also some interplay between the space of symmetrized multiplicatively bounded bilinear mappings and the intersection of both Haagerup tensor products:

\begin{proposition} \label{prop: SMB tensor}
Let $V$, $W$ and $Z$ be operator spaces. Then:
\begin{enumerate}
\item[(a)] The inclusion $\mathcal{SMB}(V\times W,Z)\hookrightarrow \mathcal{CB}((V\overset{h}\otimes W)\cap (W\overset{h}\otimes V), Z)$ is a complete contraction.

\item[(b)] If $Z=\mathcal L(H)$ there is a complete isomorphism $$\mathcal{SMB}(V\times W,\mathcal L(H))\cong \mathcal{CB}((V\overset{h}\otimes W)\cap (W\overset{h}\otimes V), \mathcal L(H)).$$
    \end{enumerate}
\end{proposition}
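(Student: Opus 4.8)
The statement has two parts, and I would treat them separately. Part (a) is essentially a bookkeeping exercise: I need to check that a symmetrized multiplicatively bounded map induces a completely bounded map on the intersection space, contractively. Part (b) is the substantive direction, where I must produce a decomposition $\phi=\phi_1+\phi_2$ from a completely bounded map on the intersection, and this is where the Hilbert-space hypothesis $Z=\mathcal L(H)$ must be used through the extension property of Remark \ref{extensiones}.

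For part (a), I would start with $\phi\in\mathcal{SMB}(V\times W,Z)$ and a decomposition $\phi=\phi_1+\phi_2$ with $\phi_1\in\mathcal{MB}(V\times W,Z)$ and $\phi_2^t\in\mathcal{MB}(W\times V,Z)$. Using the identification $\mathcal{MB}(V\times W,Z)\cong\mathcal{CB}(V\overset h\otimes W,Z)$ recalled in the preliminaries, $\phi_1$ yields a completely bounded linear map on $V\overset h\otimes W$ and $\phi_2$ yields one on $W\overset h\otimes V$ (via its transpose). Restricting both to the intersection $(V\overset h\otimes W)\cap(W\overset h\otimes V)$ — whose operator space structure is inherited from the canonical inclusion into $(V\overset h\otimes W)\oplus_\infty(W\overset h\otimes V)$ — and adding them gives a single completely bounded map whose $cb$-norm is controlled by $\|(\phi_1,\phi_2)\|$ in the $\oplus_1$ sense. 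Taking the infimum over all decompositions yields the complete contraction claimed in (a); I expect this to go through routinely from the definition of the $\mathcal{SMB}$ norm and basic properties of the $\ell_1/\ell_\infty$ duality between sum and intersection.

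For part (b), I would take a completely bounded map $T\in\mathcal{CB}((V\overset h\otimes W)\cap(W\overset h\otimes V),\mathcal L(H))$ and must exhibit it as arising from an $\mathcal{SMB}$ map. The key structural input is the completely isometric identity $(X\cap Y)^*\cong X^*+Y^*$ from interpolation theory (already cited just before the proposition), together with its vector-valued analogue obtained by tensoring against $\mathcal L(H)$; equivalently, I would use the extension property for completely bounded maps into $\mathcal L(H)$ from Remark \ref{extensiones}. The plan is to regard $(V\overset h\otimes W)\cap(W\overset h\otimes V)$ as a subspace of $(V\overset h\otimes W)\oplus_\infty(W\overset h\otimes V)$ and extend $T$ to a completely bounded map on this direct sum with the same $cb$-norm. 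A completely bounded map out of an $\oplus_\infty$-direct sum splits as a pair of completely bounded maps, one on each summand, whose norms are controlled in the appropriate way; this produces $\phi_1\in\mathcal{MB}(V\times W,\mathcal L(H))$ from the first component and $\phi_2$ (with $\phi_2^t\in\mathcal{MB}(W\times V,\mathcal L(H))$) from the second, with $\phi=\phi_1+\phi_2$ agreeing with $T$ on the intersection.

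**The main obstacle.**
The hard part is part (b), and specifically the fact that the identification can only be an \emph{isomorphism} rather than an isometry: the extension step via Remark \ref{extensiones} preserves norms, but expressing a completely bounded map on an $\oplus_\infty$-sum as a genuine pair of maps — and then recognizing that pair as a valid $\mathcal{SMB}$ decomposition with controlled $smb$-norm — introduces a constant, which is exactly why the statement of (b) claims only a complete isomorphism. I would need to verify carefully that the dualization of the interpolation identity $(X\cap Y)^*\cong X^*+Y^*$ survives the passage to $\mathcal L(H)$-valued maps, and that the injectivity of $\mathcal L(H)$ is precisely what makes the extension available; the role of the hypothesis $Z=\mathcal L(H)$ (as opposed to general $Z$, for which only (a) holds) is the crux and must be invoked explicitly at the extension step. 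I do not anticipate serious difficulty in the algebra of the decomposition once the extension is in hand, but pinning down that the resulting map is completely bounded with the correct norm control, and tracking the isomorphism constant, is where the care is required.
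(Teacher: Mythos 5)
Your proposal is correct and follows essentially the same route as the paper: part (a) via the interpolation property of the sum $\mathcal{MB}+{}^t\mathcal{MB}$ mapping contractively into $\mathcal{CB}$ of the intersection, and part (b) by extending a completely bounded map on $(V\overset{h}\otimes W)\cap(W\overset{h}\otimes V)$ to the $\oplus_\infty$-sum using the injectivity of $\mathcal L(H)$ (Remark \ref{extensiones}) and splitting it into the two summands, which is precisely where the constant $2$ and hence the loss of isometry enters. No gaps.
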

\begin{proof} (a) Composing the restriction with the usual identification we naturally have the following complete contractions:
$$
\mathcal{MB}(V\times W,Z)\hookrightarrow \mathcal{CB}((V\overset{h}\otimes W)\cap (W\overset{h}\otimes V), Z)\quad\textrm{and}\quad
\ ^t\mathcal{MB}(W\times V,Z)\hookrightarrow \mathcal{CB}((V\overset{h}\otimes W)\cap (W\overset{h}\otimes V), Z).
$$
Thus, the classical interpolation property (see \cite[Proposition 2.1]{Pisier AMS}) gives that the mapping
$$
\mathcal{SMB}(V\times W,Z)=\mathcal{MB}(V\times W,Z) + \ ^t\mathcal{MB}(W\times V,Z) \hookrightarrow \mathcal{CB}((V\overset{h}\otimes W)\cap (W\overset{h}\otimes V), Z)
$$ is also a complete contraction.

(b) In the case $Z=\mathcal L(H)$, let us see that the injective mapping of (a) is actually a surjective complete isomorphism. For that, consider $L_{\phi}\in M_n\Big(\mathcal{CB} ((V\overset{h}\otimes W)\cap (W\overset{h}\otimes V), \mathcal L(H))\Big)$. We have to prove that the bilinear associate $\phi$ belongs to $M_n\left(\mathcal{SMB}(V\times W,\mathcal L(H))\right)$ with $\|\phi\|\le 2\|L_{\phi}\|$.

Since $L_{\phi}\in M_n\Big(\mathcal{CB} ((V\overset{h}\otimes W)\cap (W\overset{h}\otimes V), \mathcal L(H))\Big)\cong
\mathcal{CB} \Big((V\overset{h}\otimes W)\cap (W\overset{h}\otimes V), \mathcal L(H^n)\Big)
$ and $(V\overset{h}\otimes W)\cap (W\overset{h}\otimes V)$ is completely isometrically contained in $(V\overset{h}\otimes W)\oplus_{\infty} (W\overset{h}\otimes V)$ there is an extension $L_{\widetilde\phi}\in
\mathcal{CB} \Big((V\overset{h}\otimes W)\oplus_{\infty} (W\overset{h}\otimes V), \mathcal L(H^n)\Big)
$ with the same completely bounded norm.
Then, we should have that the bilinear associated to $L_{\widetilde\phi}$ is written as $\phi_1 + \phi_2$ with $\|\phi_1\|_{\mathcal{MB}(V\times W, \mathcal L(H^n))}\le \|L_{\widetilde\phi}\|$ and $\|\phi_2^t\|_{\mathcal{MB}(W\times V, \mathcal L(H^n))}\le \|L_{\widetilde\phi}\|$. Hence,
$$
\|\phi_1\|_{\mathcal{MB}(V\times W, \mathcal L(H^n))} + \|\phi_2^t\|_{\mathcal{MB}(W\times V, \mathcal L(H^n))} \le 2\|L_\phi\|.
$$ Now, the usual identification $\mathcal{MB}(V\times W, \mathcal L(H^n))\cong M_n\left(\mathcal{MB}(V\times W, \mathcal L(H))\right)$ yields:
\begin{eqnarray*}
\|\phi\|_{M_n\left(\mathcal{SMB} (V\times W, \mathcal L(H))\right)} & \le & \|\phi_1\|_{M_n\left(\mathcal{MB}(V\times W, \mathcal L(H))\right)} + \|\phi_2^t\|_{M_n\left(\mathcal{MB}(W\times V, \mathcal L(H))\right)}\\ &\le & 2\|L_\phi\|_{M_n\Big(\mathcal{CB} ((V\overset{h}\otimes W)\cap (W\overset{h}\otimes V), \mathcal L(H))\Big)}.
\end{eqnarray*}
\end{proof}

  The case of scalar valued mappings is of special interest and was extensively studied in the literature   in relation with
   the so called {\sl Non-commutative Grothendieck's Theorem}. In the next section there is a  briefly exposition  of this.

  We thank the referee for suggesting  us to study  the  symmetrized multiplicatively bounded mappings and for his/her very valuable comments.

\section{Proof of Theorem \ref{thm: inclusion relations} and Examples}\label{sect: examples}

Now we study  the relationships between the bilinear ideals: we prove the inclusion relations that always hold, and  provide  examples that  distinguish them  when they are different.

\begin{proof}[Proof of Theorem \ref{thm: inclusion relations}(a)] \renewcommand{\qedsymbol}{}
It is clear, by definition, that every completely nuclear bilinear mapping is completely integral. Also,
the fact that $\|\cdot\|_\vee$ is smaller than $\|\cdot\|_h$ implies that
$$
\mathcal L_{\mathcal I}(V\overset\vee\otimes W,X)\subset \mathcal{CB}(V\overset\vee\otimes W,X)\subset \mathcal{CB}(V\overset h\otimes W,X).
$$
Moreover, since $\mathcal{I}(V\times W, X)\cong\mathcal L_{\mathcal I}(V\overset\vee\otimes W,X)$ and $\mathcal{MB}(V\times W, X)\cong \mathcal{CB}(V\overset h\otimes W,X)$, we obtain that $\mathcal{I}(V\times W, X)\subset \mathcal{MB}(V\times W, X)$.
From the very definition of $\mathcal{SMB}$, the relation  $\mathcal{MB}(V\times W, X)\subset  \mathcal{SMB}(V\times  W,X)$ always holds.
\end{proof}

All these inclusions are strict as we can see in the following  examples.

Recall that in the Banach space setting, a classical example of an integral non-nuclear bilinear mapping is $\phi:\ell_1\times\ell_1\to \mathbb{C}$ given by $\phi(x,y)=\sum_n x_ny_n$. For operator spaces, a similar example works.

\begin{example}\label{A completely integral bilinear form which is not completely nuclear}  \textit{A completely integral bilinear form which is not completely nuclear.}

Let us consider the operator space $\tau(\ell_2)$  of trace class operators from $\ell_2$ to $\ell_2$. Naturally, each element $x\in\tau(\ell_2)$ is identified with an infinite matrix $(x_{s,t})$.

We define a bilinear map $\phi:\tau(\ell_2)\times \tau(\ell_2)\to \mathbb{C}$ by
$$
\phi(x,y)=\sum_s x_{s,s}\cdot y_{s,s}
$$
The bilinear map $\phi$ is jointly completely bounded  but not completely nuclear. Indeed, by Proposition \ref{prop: nucleares bilineales=nucleares lineales into dual}, if $\phi$ is completely nuclear so is $L_\phi:\tau(\ell_2)\to \mathcal{L}(\ell_2)$ given by
$$
L_\phi(x)=\left(
            \begin{array}{ccccc}
              x_{1,1} & 0 & \cdots & \cdots & \cdots \\
              0 & x_{2,2} & 0& \cdots & \cdots \\
              0 & 0 & x_{3,3} & 0 & \cdots \\
              0 & \cdots  & 0  & x_{4,4} & \cdots \\
              \vdots  & \cdots  & \cdots  & \cdots  & \cdots  \\
            \end{array}
          \right).
$$
$L_\phi$ could not be completely nuclear because it is not compact \cite[Proposition 12.2.1]{ER-libro}.

Now we want to see that $\phi$ is completely integral. Invoking Lemma \ref{convergencia nuclear-integral}, we want to estimate the completely nuclear norms of the mappings  $\phi^m:\tau(\ell_2)\times \tau(\ell_2)\to \mathbb{C}$ given by
$$
\phi^m(x,y)=\sum_{s=1}^m x_{s,s}\cdot y_{s,s}.
$$

For each $s\in\mathbb{N}$, let us denote by $\varepsilon_{ss}$ the element in $\mathcal{L}(\ell_2)$ represented by the matrix with a number 1 in position $(s,s)$ and numbers 0 in all the other places. Recall that $\mathcal{N}(\tau(\ell_2)\times \tau(\ell_2))\cong \mathcal{L}(\ell_2)\widehat\otimes \mathcal{L}(\ell_2)/\ker\Psi$, where $\Psi:\mathcal{L}(\ell_2)\widehat\otimes \mathcal{L}(\ell_2)\to \mathcal{JCB}(\tau(\ell_2)\times \tau(\ell_2))$ is the canonical mapping defined in Section \ref{sect: compl nuclear compl integral}. Since
$$
\phi^m=\Psi\left(\sum_{s=1}^m \varepsilon_{ss}\otimes \varepsilon_{ss}\right),
$$ we have $\|\phi^m\|_{\mathcal{N}}\le \|\sum_{s=1}^m \varepsilon_{ss}\otimes \varepsilon_{ss}\|_\wedge$. In order to compute this  norm, consider  the following usual way of expressing it:
\begin{equation}\label{escritura-promedio}
\sum_{s=1}^m \varepsilon_{ss}\otimes \varepsilon_{ss}=\frac{1}{2^m}\sum_{\delta\in\{-1,1\}^m}\left(\sum_{s=1}^m \delta_s \varepsilon_{ss}\right)\otimes \left(\sum_{s=1}^m \delta_s \varepsilon_{ss}\right).
\end{equation}
It is easy to prove that for vectors $v_1,\dots,v_p$ in any operator space $V$ we have the following representation:
$$
\sum_{j=1}^p v_j\otimes v_j= \alpha\cdot \left((v_1\oplus\cdots\oplus v_p)\otimes (v_1\oplus\cdots\oplus v_p)\right) \cdot \beta,
$$ where $\alpha\in M_{1\times p^2}$, $\beta\in M_{p^2\times 1}$ and both $\alpha$ and $\beta$ have  `1' in $p$ of the places and  `0'  in the others.
Applying this representation to the expression (\ref{escritura-promedio}), we obtain
$$
\left\|\sum_{s=1}^m \varepsilon_{ss}\otimes \varepsilon_{ss}\right\|_\wedge\le \frac{1}{2^m}\|\alpha\|\cdot \max_{\delta\in\{-1,1\}^m}\left\|\sum_{s=1}^m \delta_s \varepsilon_{ss}\right\|_{\mathcal{L}(\ell_2)}^2\cdot\|\beta\|,
$$ where $\alpha\in M_{1\times 2^{2m}}$, $\beta\in M_{2^{2m}\times 1}$ and both $\alpha$ and $\beta$ have  `1' in $2^m$ of the places and  `0'  in the others. Since $\|\alpha\|=\|\beta\|= 2^{m/2}$ and $\|\sum_{s=1}^m \delta_s \varepsilon_{ss}\|_{\mathcal{L}(\ell_2)}=\max_s|\delta_s|=1$, we derive $\|\sum_{s=1}^m \varepsilon_{ss}\otimes \varepsilon_{ss}\|_\wedge\ \le 1$. Hence, $\|\phi^m\|_{\mathcal{N}}\le 1$ (in fact, it is equal to 1) and  by Lemma \ref{convergencia nuclear-integral}, $\phi$ is completely integral with $\|\phi\|_{\mathcal{I}}=1$.

\end{example}

\begin{example}\label{mb no i}
 \textit{A multiplicatively bounded bilinear mapping which is not completely integral / A symmetrized multiplicatively bounded bilinear mapping which is not multiplicatively bounded.}

Let $H$ be a Hilbert space and denote by $H_c$ the column space associated to $H$. An example of non commutativity of Haagerup tensor product is given through   the canonical complete isometries (see, for instance \cite[Propositions 9.3.1, 9.3.2 and 9.3.4]{ER-libro}):
$$
H_c\overset h\otimes(H_c)^*\cong H_c\overset\vee\otimes(H_c)^*\cong\mathcal{K}(H)\qquad \textrm{ and }\qquad (H_c)^*\overset h\otimes H_c\cong (H_c)^*\widehat\otimes H_c\cong\tau(H).
$$
A close look to these mappings allows us to state that the application
\begin{eqnarray*}
H_c\otimes(H_c)^* &\to & (H_c)^* \overset h\otimes H_c\\
v\otimes w &\mapsto & w\otimes v
\end{eqnarray*}
could not be extended  as a completely bounded mapping defined on  $ H_c\overset h\otimes (H_c)^*$. Consider
$$
\begin{array}{ccccccc}
\phi: (H_c)^*\times H_c&\to & (H_c)^* \overset h\otimes H_c &\qquad \textrm{and}\qquad &\phi^t: H_c\times(H_c)^* &\to &(H_c)^* \overset h\otimes H_c \\
(w,v) &\mapsto & w\otimes v &\qquad\ \ \ \ \qquad  &(v,w) &\mapsto & w\otimes v.
\end{array}
$$ It turns out that $\phi$ is multiplicatively bounded while $\phi^t$ is not. Hence, $\phi$ could not be completely integral (because the ideal of completely integral bilinear mappings is symmetric). Therefore, $\phi$ is multiplicatively bounded but not completely integral and $\phi^t$ is symmetrized multiplicatively bounded but not multiplicatively bounded.
\end{example}
 We also see in \cite[Example 3.6]{HaaMu}, or in Example \ref{jcb no ext} below, that  the bilinear ideals  $\mathcal{SMB}$ and $\mathcal{JCB}$ do not coincide.

\begin{proof}[Proof of Theorem \ref{thm: inclusion relations}(b)] \renewcommand{\qedsymbol}{}
The ideal of completely extendible bilinear mappings cannot be placed as a link  in the chain of inclusions in Theorem \ref{thm: inclusion relations} (a): It contains the ideal  of
 completely integral bilinear operators (see arguments below), but it has not a relation
  with the ideal of  multiplicatively bounded bilinear mappings  holding for every operator space.   Examples \ref{ext no mb} and \ref{ex: mb no e}  prove this.
   We will see, though, that  in the particularly relevant cases  when the   range is  $\mathbb{C}$ or $\mathcal{L}(H)$ there are relations between them.

In the Banach space setting, Grothendieck-integral bilinear mappings are always extendible \cite[Proposition 7]{CarLas}.
 Let us see that an analogous contention holds in the operator space framework. Pisier (personal communication) made us realize that completely integral linear mappings being  completely 2-summing are hence completely extendible \cite[Proposition 6.1]{Pisier-Asterisque}. This linear result allows us to derive the bilinear one.

 Indeed, from Theorem \ref{thm: main for integrals}, we know $\mathcal{I}(V\times W, X)\cong \mathcal{L_{\mathcal{I}}}(V\overset\vee\otimes W,X)$. Now, the previous linear inclusion gives us  $\mathcal{L_{\mathcal{I}}}(V\overset\vee\otimes W,X)\subset \mathcal{L_{\mathcal{E}}}(V\overset\vee\otimes W,X)$. Also, since the operator space tensor norm $\|\cdot\|_{\vee}$ is smaller than $\eta$,  and $ \mathcal{L}_{\mathcal{E}}$ is an ideal, we have $\mathcal{L_{\mathcal{E}}}(V\overset\vee\otimes W,X)\subset \mathcal{L_{\mathcal{E}}}(V\overset\eta\otimes W,X)$. Now, the conclusion follows once we see that given any $\varphi\in \mathcal{L_{\mathcal{E}}}(V\overset\eta\otimes W,X)$, its  associated bilinear mapping $\phi:V\times W\to X$ belongs to $\mathcal E(V\times W,X)$.

  The extendibility of $\varphi$ along with the inclusion $V\overset\eta\otimes W\hookrightarrow \mathcal L(H_V)\widehat\otimes\mathcal L(H_W)$ produce that, for any $\varepsilon >0$ there exists a completely bounded
linear mapping $\varphi_0:\mathcal L(H_V)\widehat\otimes\mathcal L(H_W)\to X$ that extends $\phi$ with
$$
\|\varphi\|_{\mathcal{L_{\mathcal{E}}}}\le \|\varphi_0\|_{cb}\le \|\varphi\|_{\mathcal{L_{\mathcal{E}}}}\ +\varepsilon.
$$
It is clear now that the bilinear map  associated to $\varphi_0$,  $\phi_0:\mathcal L(H_V)\times\mathcal L(H_W)\to X$, is an extension  of $\phi$ that satisfies
$$
\|\phi\|_{\mathcal{E}}\le \|\phi_0\|_{jcb}=\|\varphi_0\|_{cb}\le \|\varphi\|_{\mathcal{L_{\mathcal{E}}}}\ +\varepsilon.
$$
Hence, $\phi$ is completely extendible with $\|\phi\|_{\mathcal E}\le \|\varphi\|_{\mathcal{L_{\mathcal{E}}}}$.

\bigskip

Therefore, (b) in Theorem \ref{thm: inclusion relations} is proved:
$
\mathcal{I}(V\times W, X)\subset \mathcal{E}(V\times W, X)\subset \mathcal{JCB}(V\times W, X).
$

 Examples \ref{ext no mb} and \ref{ex: mb no e} below, will show     that both inclusions could be strict.
\end{proof}

 It is known \cite[page 45]{Wi} that multiplicatively bounded bilinear mappings with range $\mathcal L(H)$ are completely extendible. This can also be seen as a consequence of Arvenson-Wittstock
 extension theorem for completely bounded mappings (Remark \ref{extensiones}) along with the fact that the Haagerup tensor norm preserves complete isometries.  Moreover, the inclusion $\mathcal{MB}(V\times W, \mathcal L(H))\subset \mathcal{E}(V\times W, \mathcal L(H))$  is a complete contraction. Since $\mathcal{E}$ is a symmetric ideal, appealing once more to \cite[Proposition 2.1]{Pisier AMS}
  we derive the complete contractive inclusion
  $$
\mathcal{SMB}(V\times W, \mathcal L(H))\subset \mathcal{E}(V\times W, \mathcal L(H)),
$$
 which proves $(c)$ in Theorem \ref{thm: inclusion relations}.

We do not know whether this last inclusion is strict. Actually, for scalar-valued bilinear mappings we do know that the equality  isomorphically holds. This is a consequence of Grothendieck's Theorem for C$^*$-algebras. In \cite{Pisier-GTSurvey} one may find a broad exposition on the topic. For the moment let us recall just some relevant results in a terminology according to our presentation. Pisier and Shlyakhtenko \cite{PS} obtain the result for exact operator spaces (and also for $C^*$-algebras satisfying some conditions). In {\cite[Theorem 0.4]{PS}} they prove:

\begin{theorem HaaMu PS}[Pisier-Shlyakhtenko]
 If $V$ and $W$ are exact operator spaces, then the following isomorphism holds:
 $$\mathcal{SMB}(V\times W)= \mathcal{JCB}(V\times W).$$
\end{theorem HaaMu PS}

Haagerup and Musat \cite{HaaMu} prove the theorem for general $C^*$-algebras. Combining \cite[Theorem 1.1]{HaaMu} with \cite[Lemma 3.1]{HaaMu} (which relies on Pisier and Shlyakhtenko's result) produces:

 \begin{theorem HaaMu PS}[Haagerup-Musat]
 If $A$ and $B$ are $C^*$-algebras, then the following isomorphism holds: $$\mathcal{SMB}(A\times B)= \mathcal{JCB}(A\times B).$$
\end{theorem HaaMu PS}

 As a consequence, for any operator spaces $V$ and $W$ the following (Banach space) isomorphism holds:
 $$
 \mathcal{SMB}(V\times W)=\mathcal{E}(V\times W).
 $$
 Indeed, let $\phi\in \mathcal{E}(V\times W)$. For  $V\to\mathcal L(H_V)$ and $ W\to\mathcal L(H_W)$  complete isometries and $\varepsilon >0$, let
 $\psi: \mathcal L(H_V)\times \mathcal L(H_W)\to \mathbb{C}$ be a jointly completely bounded  extension of $\phi$ with $\|\psi\|_{jcb}\le \|\phi\|_{\mathcal E} + \varepsilon$.    By Haagerup-Musat's Theorem (for $A=\mathcal L(H_V)$ and $B=\mathcal L(H_W)$), $\psi$ can be decomposed as $\psi=\psi_1+\psi_2$, with
 $\psi_1\in \mathcal{MB}(\mathcal L(H_V)\times \mathcal L(H_W))$, $\psi_2^t\in  \mathcal{MB}(\mathcal L(H_W)\times \mathcal L(H_V))$ and $\|\psi_1\|_{mb} + \|\psi_2^t\|_{mb}\le K \|\psi\|_{jcb}$.  Restricting the domains of $\psi_1$ and $\psi_2$ to
 $V\times W$, we complete the proof.

A predual version of the last expression reads as
$$
V\overset\eta\otimes W = (V\overset h\otimes W)\cap (W \overset h\otimes V)\quad\textrm{isomorphically.}
$$

It is worth noticing that Oikhberg and Pisier in \cite{OikPis} proved that the sum of these Haagerup tensor products $(V\overset h\otimes W)+ (W \overset h\otimes V)$ is completely isometric to the ``maximal'' tensor product $V\overset \mu\otimes W$ which was introduced and studied in that article.

Let us now show that the other two inclusions of Theorem \ref{thm: inclusion relations} $(c)$ are strict. We have already distinguished the space of multiplicatively bounded bilinear forms from its symmetrized relative.   These spaces may be different even when  the range is $\mathcal L(H)$. To construct an example, first we need  an easy observation:

\begin{remark}\label{isom-mb}
Let $\phi:V\times W\to X$ be a jointly completely bounded bilinear mapping and $j:X\to Y$ be a complete isometry. Then, $\phi$ is multiplicatively bounded if and only if $j\circ \phi$ is multiplicatively bounded.

\end{remark}

Indeed, for any $v\in M_n(V)$ and $w\in M_n(W)$, since $(j\circ \phi)_{(n)}(v,w)=j_n\left(\phi_{(n)}(v,w)\right)$ we have
$$
\left\|(j\circ \phi)_{(n)}(v,w)\right\|=\left\|j_n\left(\phi_{(n)}(v,w)\right)\right\|=\left\| \phi_{(n)}(v,w)\right\|.
$$
Thus, $\|j\circ\phi\|_{mb}=\|\phi\|_{mb}$.

\begin{example} \label{ext no mb}
\textit{A symmetrized multiplicatively bounded bilinear mapping with range $\mathcal L(H)$, which is not multiplicatively bounded / A completely extendible bilinear mapping which is not completely integral.}

We recover  the mappings $\phi$ and $\phi^t$ of Example \ref{mb no i}. Denoting by $V=(H_c)^* \overset h\otimes H_c$, we consider  $\Omega_V:V\to \mathcal{L}(H_V)$ the usual completely isometric inclusion. Now, let $\psi=\Omega_V\circ\phi:(H_c)^*\times H_c\to\mathcal L(H_V)$. The previous remark and the fact that $\phi^t$ is not multiplicatively bounded, imply that $\psi^t=\Omega_V\circ \phi^t:H_c\times (H_c)^* \to\mathcal L(H_V)$  neither is multiplicatively bounded.

On the other hand,  $\phi\in \mathcal{MB}((H_c)^*\times H_c,(H_c)^* \overset h\otimes H_c)$ and so $\psi\in \mathcal{MB}((H_c)^*\times H_c,\mathcal L(H_V))$. Hence,
 $\psi^t\in \mathcal{SMB}((H_c)^*\times H_c,\mathcal L(H_V))$.
\end{example}

\begin{example}\label{jcb no ext}  \textit{A jointly completely bounded bilinear mapping (with range $\mathbb C$) which is not extendible (and hence not symmetrized multiplicatively bounded).}

 Consider a non-complemented copy of $\ell_2$ in $\mathcal L(H)$,  and let $V$ be the operator space determined  by $\ell_2$ with  the matrix structure inherited from $\mathcal L(H)$.
 Let
$$
\begin{array}{ccc}
\phi: V\times V^* &\to & \mathbb{C}  \\
((a_i)_i, (b_i)_i) &\mapsto & \sum_{i=1}^{\infty}a_ib_i.
\end{array}
$$
$\phi$ is jointly completely bounded  but there is  not a jointly completely bounded  extension of $\phi$ defined on    $\mathcal L(H)\times V^*$, since this extension would give rise to a bounded projection
on $\mathcal L(H)$ onto that copy of $\ell_2$.
\end{example}

Now we  prove that the inclusion of the space of multiplicatively bounded bilinear mappings (and hence symmetrized multiplicatively bounded) into the space of completely extendible bilinear mappings is not longer true when the range space is an arbitrary operator space.

For that, it is convenient  to introduce  the concept of completely extendible linear mapping. We say that a mapping    $\varphi\in \mathcal{CB}(V,Z)$ is \textbf{completely extendible} if for any operator space $X$ such that $V\subset X$, there exists a completely bounded extension $\overline{\varphi}:X\to Z$ of $\varphi$. The set of completely extendible linear mappings from $V$ to $Z$ is denoted by $\mathcal{L}_{\mathcal{E}}(V,Z)$.

Following the same steps as in the proofs of Proposition \ref{prop:extendible a L(H)} it is obtained that $\varphi\in \mathcal{CB}(V,Z)$ is completely extendible if and only if it can be extended to $\mathcal L(H_V)$ and that $\mathcal{L}_{\mathcal{E}}(V,Z)$ is an operator space with the norm given by
$$
\|\varphi\|_{\mathcal L_{\mathcal E}}= \inf \{\|\varphi_0\|_{cb}:\ \varphi_0 \textrm{ extension of }\varphi \textrm{ to } M_n\left(\mathcal{CB} (\mathcal L(H_V), Z)\right)\},
$$ for every $\varphi\in M_n\left(\mathcal{L}_{\mathcal E}(V, Z)\right)$.

As in Proposition \ref{prop:extendible es ideal} it is also obtained that $\mathcal L_{\mathcal E}$ is a (linear) mapping ideal.

\begin{example}\label{ex: mb no e} \textit{A  multiplicatively bounded bilinear mapping  which is not extendible.}

Let $V$ be the operator space of Example \ref{jcb no ext}. The canonical mapping   $V \overset h\otimes \mathbb C\rightarrow V $ is a complete isometry. Hence,  its associated bilinear map  $\phi: V\times \mathbb C\rightarrow V$  is  multiplicatively bounded. However, since $id: V \rightarrow V $   is not extendible,  $\phi$  neither is so.

\end{example}

\subsection*{Acknowledgements} The first author wishes to thank the Centro de Investigaci\'{o}n en Matem\'{a}ticas (Guanajuato) for its kind hospitality during the months of January and February 2012, when this work was initiated.

\end{document}